\newcommand{\addPar}{\hyperref[def:Grec]{\textsc{Add4-cycle}}}
\newcommand{\closePar}{\hyperref[def:Grec]{\textsc{Close4-cycle}}}
\newtheorem{theorem}{Theorem}[section]
\newtheorem{lemma}[theorem]{Lemma}
\newtheorem{proposition}[theorem]{Proposition}
\newtheorem{corollary}[theorem]{Corollary}
\theoremstyle{definition}\newtheorem{definition}[theorem]{Definition}
\theoremstyle{definition}
\theoremstyle{definition}\newtheorem{remark}[theorem]{Remark}
\newcommand{\sage}{\textsc{SageMath}}
\newcommand{\FlexRiLoG}{\textsc{FlexRiLoG}}
\newcommand{\Grec}{\mathcal{G}_\text{rec}}
\newcommand{\parallelogramTilingFramework}{carpet framework}
\newcommand{\Pframework}{P-framework}
\newcommand{\oriented}[2]{(#1,#2)}
\newcommand{\eqwithreference}[1]{\stackrel{\text{\footnotesize\mbox{#1}}}{=}}
\newcommand{\cartProd}{\mathbin{\square}}
\newcommand{\blue}{\text{blue}}
\newcommand{\red}{\text{red}}
\newcommand{\RR}{\mathbb{R}}
\newcommand{\ci}{i}
\colorlet{ecol}{black!50!white}
\definecolor{colR}{rgb}{.932,.172,.172} 
\definecolor{colB}{rgb}{.255,.41,.884} 
\colorlet{colG}{Gold}
\colorlet{col1}{LightGreen}
\colorlet{col2}{IndianRed!80!white}
\colorlet{col3}{Gold}
\colorlet{col4}{LightSkyBlue}
\colorlet{col5}{BurlyWood}
\colorlet{col6}{Purple!60!white}
\colorlet{ncol}{DarkSeaGreen}
\tikzstyle{gvertex}=[circle, draw, fill=black, inner sep=0pt, minimum size=4pt]
\tikzstyle{smallgvertex}=[circle, draw, fill=black, inner sep=0pt, minimum size=2pt]
\tikzstyle{midgvertex}=[circle, draw, fill=black, inner sep=0pt, minimum size=3pt]
\colorlet{colvR}{black!70!white}
\tikzstyle{fvertex}=[circle,thick,draw=colvR,fill=white,inner sep=0pt, minimum size=4.5pt]
\tikzstyle{midfvertex}=[circle,thick,draw=colvR, fill=white, inner sep=0pt, minimum size=3.25pt]
\tikzstyle{smallfvertex}=[circle,thick,draw=colvR, fill=white, inner sep=0pt, minimum size=2pt]
\tikzstyle{edge}=[line width=1.5pt,ecol]
\tikzstyle{redge}=[edge,colR]
\tikzstyle{bedge}=[edge,colB]
\tikzstyle{gedge}=[edge,colG]
\tikzstyle{gridl}=[ecol]
\tikzstyle{gridp}=[inner sep=1pt,circle,fill=black!70!white]
\tikzstyle{sym}=[ecol,dashed]
\tikzstyle{axes}=[gridl,-latex]
\tikzstyle{ngvertex}=[gvertex, draw=ncol, fill=ncol]
\tikzstyle{edgeq}=[edge,gray!60,densely dashed]
\tikzstyle{nedge}=[edge,ncol]
\tikzstyle{bdedge}=[line width=1.5pt,colB, densely dashed]
\tikzstyle{rdedge}=[line width=1.5pt,colR, densely dashed]
\tikzstyle{genericgraph}=[dashed]
\tikzstyle{labelsty}=[font=\scriptsize]
\tikzstyle{indicatededge}=[pin={[pin distance=6pt,pin edge={thin}]20:},pin={[pin distance=6pt,pin edge={thin}]-20:},pin={[pin distance=10pt,pin edge={thin}]2:}]
\tikzstyle{ribbon}=[densely dashed,rounded corners,line width=1pt,shorten <= -6pt, shorten >= -6pt]
\tikzstyle{brace}=[line width=1pt,ecol]
\tikzstyle{category}=[font=\small]
\tikzstyle{interiorangle}=[angle radius=0.3cm]
\title{Bracing frameworks \\consisting of parallelograms}
\author{Georg Grasegger\thanks{Johann Radon Institute for Computational and Applied Mathematics (RICAM), Austrian Academy of Sciences}
\and Jan Legerský\thanks{Johannes Kepler University Linz, Research Institute for Symbolic Computation (RISC)}
\thanks{Department of Applied Mathematics, Faculty of Information Technology, Czech Technical University in Prague}}
\date{}
\begin{document}

\maketitle

\begin{abstract}
	A rectangle in the plane can be continuously deformed preserving its edge lengths,
	but adding a diagonal brace prevents such a deformation.   
	Bolker and Crapo characterized combinatorially which choices of braces make a grid of squares 
	infinitesimally rigid using a \emph{bracing graph}: 
	a bipartite graph whose vertices are the columns and rows of the grid, 
	and a row and column are adjacent if and only if they meet at a braced square.
	Duarte and Francis generalized the notion of the bracing graph to rhombic carpets,
	proved that the connectivity of the bracing graph implies rigidity and stated the other implication without proof.
	Nagy Kem gives the equivalence in the infinitesimal setting. 
	We consider continuous deformations of braced frameworks consisting of a graph from a more general class
	and its placement in the plane such that every 4-cycle forms a parallelogram.
	We show that rigidity of such a braced framework is equivalent to the non-existence of a special edge coloring,
	which is in turn equivalent to the corresponding bracing graph being connected.
\end{abstract}

\section{Introduction}
A planar framework is a graph together with a placements of its vertices in the plane.
If there is a non-trivial flex (a deformation of the placement preserving the distances between adjacent vertices that
is not induced by a rigid motion), then the framework is said to be flexible, otherwise rigid.
Bolker and Crapo~\cite{BolkerCrapo} studied infinitesimal flexibility of a framework corresponding to a grid of squares
with some squares being braced by adding diagonals, see \Cref{fig:introgrid}.
They construct a bipartite graph by taking the columns and rows of the grid to be the two parts of the vertex set;
a column and row are connected if and only if their common square is braced.
They showed that a braced grid is infinitesimally rigid, i.e., has no non-trivial first order flex,
if and only if the bipartite graph is connected. 

\begin{figure}[ht]
  \centering
  \begin{tikzpicture}[scale=0.75]
    \foreach \x [remember=\x as \xr (initially 1)] in {1,2,3,4}
    {
			\foreach \y [remember=\y as \yr (initially 1)] in {1,2,3,4}
			{
				\node[fvertex] (v\x\y) at (\x,\y) {};
				\draw[edge] (v\x\yr)--(v\x\y);
				\draw[edge] (v\x\y)--(v\xr\y);
			}			
    }
  \end{tikzpicture}
  \qquad
  \begin{tikzpicture}[scale=0.75]
				\coordinate (xl) at (1,0);
				\coordinate (yl) at (0,1);
				\coordinate (o) at (0,0);
				\coordinate[rotate around=0:(o)] (a1) at ($(o)+(xl)$);
				\coordinate[rotate around=30:(a1)] (a2) at ($(a1)+(xl)$);
				\coordinate[rotate around=-20:(a2)] (a3) at ($(a2)+(xl)$);
				
				\coordinate[rotate around=0:(o)] (b1) at ($(o)+(yl)$);
				\coordinate[rotate around=20:(b1)] (b2) at ($(b1)+(yl)$);
				\coordinate[rotate around=-20:(b2)] (b3) at ($(b2)+(yl)$);
				
				\foreach \a in {(o),(a1),(a2),(a3)}
				{
					\begin{scope}[shift={\a}]
							\draw[edge,rotate=90] (0,0) -- (1,0);
							\begin{scope}[shift={($(b1)-(o)$)},rotate=20]
								\begin{scope}[shift={($(b2)-(b1)$)},rotate=-40]
									\draw[edge,rotate=90] (0,0) -- (1,0);
								\end{scope}
								\draw[edge,rotate=90] (0,0) -- (1,0);
							\end{scope}
						\end{scope}
				}
				\foreach \b in {(o),(b1),(b2),(b3)}
				{
					  \begin{scope}[shift={\b}]
							\draw[edge] (0,0) -- (1,0);
							\begin{scope}[shift={($(a1)-(o)$)},rotate=30]
								\begin{scope}[shift={($(a2)-(a1)$)},rotate=-50]
									\draw[edge] (0,0) -- (1,0);
								\end{scope}
								\draw[edge] (0,0) -- (1,0);
							\end{scope}
						\end{scope}
				}
				\foreach \a in {o,a1,a2,a3}
				{
					\foreach \b in {o,b1,b2,b3}
					{
						\node[fvertex] at ($(\a) +(\b)$) {};
					}
				}
    \end{tikzpicture}
    \qquad
    \begin{tikzpicture}[scale=0.75]
			\foreach \x [remember=\x as \xr (initially 1)] in {1,2,3,4}
			{
				\foreach \y [remember=\y as \yr (initially 1)] in {1,2,3,4}
				{
					\node[fvertex] (v\x\y) at (\x,\y) {};
					\draw[edge] (v\x\yr)--(v\x\y);
					\draw[edge] (v\x\y)--(v\xr\y);
				}			
			}
			\draw[edge] (v13)--(v24);
			\draw[edge] (v23)--(v34);
			\draw[edge] (v33)--(v44);
			\draw[edge] (v22)--(v33);
			\draw[edge] (v21)--(v32);
		\end{tikzpicture}
		\qquad
		\begin{tikzpicture}[scale=0.75]
				\coordinate (o) at (0,0);
				\coordinate[rotate around=0:(o)] (a1) at ($(o)+(1,0)$);
				\coordinate[rotate around=40:(a1)] (a2) at ($(a1)+(1,0)$);
				\coordinate[rotate around=0:(a2)] (a3) at ($(a2)+(1,0)$);
				
				\coordinate[rotate around=0:(o)] (b1) at ($(o)+(0,1)$);
				\coordinate[rotate around=40:(b1)] (b2) at ($(b1)+(0,1)$);
				\coordinate[rotate around=0:(b2)] (b3) at ($(b2)+(0,1)$);
				
				\draw[edge,rotate=45] (0,0) -- (1.41421,0);
				\draw[edge,rotate=45,shift={(a2)}] (0,0) -- (1.41421,0);
				\draw[edge,rotate=45,shift={(b2)}] (0,0) -- (1.41421,0);
				\draw[edge,rotate=45,shift={($(a2)+(b2)$)}] (0,0) -- (1.41421,0);
				\draw[edge,rotate=85,shift={($(a1)+(b1)$)}] (0,0) -- (1.41421,0);
				
				\foreach \a in {(o),(a1),(a2),(a3)}
				{
					\begin{scope}[shift={\a}]
							\draw[edge,rotate=90] (0,0) -- (1,0);
							\begin{scope}[shift={($(b1)-(o)$)},rotate=40]
								\begin{scope}[shift={($(b2)-(b1)$)},rotate=-40]
									\draw[edge,rotate=90] (0,0) -- (1,0);
								\end{scope}
								\draw[edge,rotate=90] (0,0) -- (1,0);
							\end{scope}
						\end{scope}
				}
				\foreach \b in {(o),(b1),(b2),(b3)}
				{
					  \begin{scope}[shift={\b}]
							\draw[edge] (0,0) -- (1,0);
							\begin{scope}[shift={($(a1)-(o)$)},rotate=40]
								\begin{scope}[shift={($(a2)-(a1)$)},rotate=-40]
									\draw[edge] (0,0) -- (1,0);
								\end{scope}
								\draw[edge] (0,0) -- (1,0);
							\end{scope}
						\end{scope}
				}
				\foreach \a in {o,a1,a2,a3}
				{
					\foreach \b in {o,b1,b2,b3}
					{
						\node[fvertex] at ($(\a) +(\b)$) {};
					}
				}
    \end{tikzpicture}
    \caption{Grid frameworks can be deformed in a way that preserves the edge lengths.
    By bracing it (i.e.\ by adding diagonal edges) we can reduce the number of degrees of freedom.
    The left braced grid is rigid whereas the right one allows a flex.}
    \label{fig:introgrid}
\end{figure}
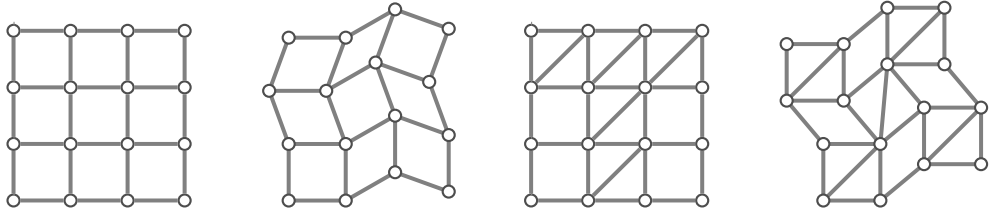

Generalizations to rectangular grids with holes~\cite{Gaspar1999,RecskiMatroid} or placing longer diagonals~\cite{Gaspar1998}
than those for a single 4-cycle have been studied as well as bracing by cables~\cite{RadicsRecski}.
Grids with rectilinear boundary are discussed in~\cite{Ito2015}.
Extensions to cubic grids have been studied~\cite{Bolker,Recski}.
The papers \cite{Ellenbroek2011,Zhang2015} describe the number of randomly added braces
for which the transition from rigid to flexible occurs.
A related problem to the rigidity of a grid is the rigidity of one- and multi-story building~\cite{Radics,RecskiStatics},
also with cables~\cite{RecskiTensegrities1,RecskiTensegrities2,RecskiTensegrities3}.
Simple forms of bracing grids are also known to be suitable as a puzzle, for science communication and for student's exercises (see for instance \cite{ServatiusGrid}).

In this work, we focus on parallelograms instead of squares,
and we allow a richer combinatorial structure than grids.
Flexibility of rhombic/parallelogramic tilings is studied by physicists
due to its relation with quasicrystals~\cite{Zhou2019}.
The bracing of rhombic carpets, which are 1-skeleta of finite simply
connected pieces of rhombic tilings, was investigated by Wester~\cite{Wester}.
Duarte and Francis~\cite{DuarteFrancis} formalized the notions necessary to study the flexibility of rhombic carpets:
a natural step from columns and rows of a grid towards a rhombic carpet is to take \emph{ribbons}.
These are sequences of rhombi such that
every two consecutive ones share an edge and all these edges are parallel.
Following the idea of Bolker and Crapo, Duarte and Francis construct a \emph{bracing graph}
whose vertices are the ribbons
and two ribbons are adjacent if they have a common rhombus that is braced.
They prove that if the constructed graph is connected, than the braced rhombic carpet is rigid.
Further, they state the other implication without proof. 
We thank Eliana Duarte for pointing out this statement to us and
sharing some hints about a possible proof~\cite{Duarte}.
Nagy Kem~\cite{NagyKem2017} translates the infinitesimal
rigidity of a braced rhombic carpet to the rigidity of an auxiliary framework
which in turn corresponds to the connectivity of the bracing graph.  

We formulate the problem of the flexibility of braced structures in terms of frameworks.
In particular, we define ribbons as equivalence classes on edges of the underlying graph using its 4-cycles.
We consider a special class of graphs, which we call \emph{ribbon-cutting} graphs.
A connected graph is ribbon-cutting if every ribbon is an edge cut,
i.e., removing the edges of the ribbon makes the graph disconnected.
Regarding the placement, we ask all 4-cycles to form parallelograms, see \Cref{fig:introcarpet}.
Notice that the frameworks we consider --- we call them \Pframework{}s --- 
form a proper superset of the frameworks corresponding to rhombic carpets and rectangular grids (without holes).
The question we address is analogous to the one by Bolker, Crapo, Duarte and others,
namely, characterization of choices of braces of parallelograms yielding flexible/rigid \Pframework{}s.
Contrary to Bolker, Crapo and Nagy Kem, we consider finite flexes, not infinitesimal ones.
Furthermore, we use a recently established method of special edge colorings to prove our results.

\begin{figure}[ht]
  \centering
  \begin{tikzpicture}[rotate=-90]
    \node[fvertex] (0) at (0,0) {};
    \node[fvertex] (1) at (0,1) {};
    \node[fvertex,rotate around=0:(0)] (2) at ($(0)+(1.00,0)$) {};
    \node[fvertex] (3) at ($(2)+(1)-(0)$) {};
    \node[fvertex,rotate around=45:(2)] (4) at ($(2)+(0.50,0)$) {};
    \node[fvertex] (5) at ($(4)+(3)-(2)$) {};
    \node[fvertex,rotate around=120:(1)] (6) at ($(1)+(0.50,0)$) {};
    \node[fvertex] (7) at ($(6)+(3)-(1)$) {};
    \node[fvertex,rotate around=72:(7)] (8) at ($(7)+(1.50,0)$) {};
    \node[fvertex] (9) at ($(8)+(3)-(7)$) {};
    \node[fvertex] (10) at ($(5)+(9)-(3)$) {};
    \node[fvertex] (11) at ($(8)+(10)-(9)$) {};
    \node[fvertex] (12) at ($(6)+(8)-(7)$) {};
    \node[fvertex] (13) at ($(0)+(6)-(1)$) {};
    \node[fvertex,rotate around=144:(6)] (14) at ($(6)+(1.00,0)$) {};
    \node[fvertex] (15) at ($(14)+(13)-(6)$) {};
    \node[fvertex,rotate around=108:(6)] (16) at ($(6)+(1.00,0)$) {};
    \node[fvertex] (17) at ($(16)+(14)-(6)$) {};
    \node[fvertex] (18) at ($(12)+(16)-(6)$) {};
    \node[fvertex] (19) at ($(17)+(18)-(16)$) {};
    \node[fvertex,rotate around=90:(12)] (20) at ($(12)+(0.75,0)$) {};
    \node[fvertex] (21) at ($(20)+(8)-(12)$) {};
    \node[fvertex] (22) at ($(11)+(21)-(8)$) {};
    \node[fvertex] (23) at ($(18)+(20)-(12)$) {};
    \node[fvertex] (24) at ($(23)+(21)-(20)$) {};
    \node[fvertex] (25) at ($(19)+(23)-(18)$) {};
    \node[fvertex] (26) at ($(24)+(22)-(21)$) {};
    \node[fvertex] (27) at ($(25)+(24)-(23)$) {};
    \node[fvertex] (28) at ($(27)+(26)-(24)$) {};
    \node[fvertex] (29) at ($(25)+(28)-(27)$) {};
    \draw[edge] (0)--(1)
        (0)--(2) (2)--(3) (3)--(1)
        (2)--(4) (4)--(5) (5)--(3)
        (1)--(6) (6)--(7) (7)--(3)
        (7)--(8) (8)--(9) (9)--(3)
        (5)--(10) (10)--(9) 
        (8)--(11) (11)--(10) 
        (6)--(12) (12)--(8) 
        (0)--(13) (13)--(6) 
        (6)--(14) (14)--(15) (15)--(13)
        (6)--(16) (16)--(17) (17)--(14)
        (12)--(18) (18)--(16) 
        (17)--(19) (19)--(18) 
        (12)--(20) (20)--(21) (21)--(8)
        (11)--(22) (22)--(21) 
        (18)--(23) (23)--(20) 
        (23)--(24) (24)--(21) 
        (19)--(25) (25)--(23) 
        (24)--(26) (26)--(22) 
        (25)--(27) (27)--(24) 
        (27)--(28) (28)--(26) 
        (25)--(29) (29)--(28);
	\end{tikzpicture}
	\qquad
	\begin{tikzpicture}[rotate=-90]
    \node[fvertex] (0) at (0,0) {};
    \node[fvertex] (1) at (0,1) {};
    \node[fvertex,rotate around=0:(0)] (2) at ($(0)+(1.00,0)$) {};
    \node[fvertex] (3) at ($(2)+(1)-(0)$) {};
    \node[fvertex,rotate around=45:(2)] (4) at ($(2)+(0.50,0)$) {};
    \node[fvertex] (5) at ($(4)+(3)-(2)$) {};
    \node[fvertex,rotate around=110:(1)] (6) at ($(1)+(0.50,0)$) {};
    \node[fvertex] (7) at ($(6)+(3)-(1)$) {};
    \node[fvertex,rotate around=80:(7)] (8) at ($(7)+(1.50,0)$) {};
    \node[fvertex] (9) at ($(8)+(3)-(7)$) {};
    \node[fvertex] (10) at ($(5)+(9)-(3)$) {};
    \node[fvertex] (11) at ($(8)+(10)-(9)$) {};
    \node[fvertex] (12) at ($(6)+(8)-(7)$) {};
    \node[fvertex] (13) at ($(0)+(6)-(1)$) {};
    \node[fvertex,rotate around=130:(6)] (14) at ($(6)+(1.00,0)$) {};
    \node[fvertex] (15) at ($(14)+(13)-(6)$) {};
    \node[fvertex,rotate around=100:(6)] (16) at ($(6)+(1.00,0)$) {};
    \node[fvertex] (17) at ($(16)+(14)-(6)$) {};
    \node[fvertex] (18) at ($(12)+(16)-(6)$) {};
    \node[fvertex] (19) at ($(17)+(18)-(16)$) {};
    \node[fvertex,rotate around=80:(12)] (20) at ($(12)+(0.75,0)$) {};
    \node[fvertex] (21) at ($(20)+(8)-(12)$) {};
    \node[fvertex] (22) at ($(11)+(21)-(8)$) {};
    \node[fvertex] (23) at ($(18)+(20)-(12)$) {};
    \node[fvertex] (24) at ($(23)+(21)-(20)$) {};
    \node[fvertex] (25) at ($(19)+(23)-(18)$) {};
    \node[fvertex] (26) at ($(24)+(22)-(21)$) {};
    \node[fvertex] (27) at ($(25)+(24)-(23)$) {};
    \node[fvertex] (28) at ($(27)+(26)-(24)$) {};
    \node[fvertex] (29) at ($(25)+(28)-(27)$) {};
    \draw[edge] (0)--(1)
        (0)--(2) (2)--(3) (3)--(1)
        (2)--(4) (4)--(5) (5)--(3)
        (1)--(6) (6)--(7) (7)--(3)
        (7)--(8) (8)--(9) (9)--(3)
        (5)--(10) (10)--(9) 
        (8)--(11) (11)--(10) 
        (6)--(12) (12)--(8) 
        (0)--(13) (13)--(6) 
        (6)--(14) (14)--(15) (15)--(13)
        (6)--(16) (16)--(17) (17)--(14)
        (12)--(18) (18)--(16) 
        (17)--(19) (19)--(18) 
        (12)--(20) (20)--(21) (21)--(8)
        (11)--(22) (22)--(21) 
        (18)--(23) (23)--(20) 
        (23)--(24) (24)--(21) 
        (19)--(25) (25)--(23) 
        (24)--(26) (26)--(22) 
        (25)--(27) (27)--(24) 
        (27)--(28) (28)--(26) 
        (25)--(29) (29)--(28);
	\end{tikzpicture}
	\caption{Carpet frameworks can be deformed in a way that preserves the edge lengths.}
	\label{fig:introcarpet}
\end{figure}
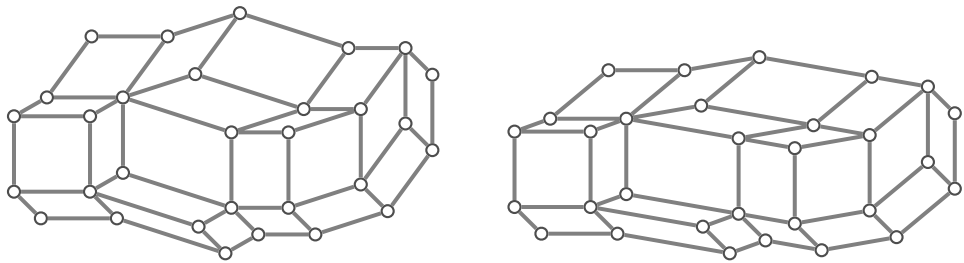

The notion of \emph{NAC-colorings} was
developed in our previous paper~\cite{flexibleLabelings}.
A NAC-coloring is a surjective edge coloring of a graph by red and blue
such that for every cycle of the graph,
either all edges have the same color or there are at least two edges of each color.
We proved that a graph has a flexible framework if and only if it has a NAC-coloring.
It appears that the techniques used to prove the theorem fit nicely to the context
of bracing P-frameworks if we restrict ourselves to certain NAC-colorings:
a NAC-coloring is called \emph{cartesian} if there are no two vertices connected by a red path and blue path simultaneously.
The non-existence of a cartesian NAC-coloring serves as a bridge in the proof that  
a braced \Pframework{} is rigid if and only
if the corresponding bracing graph (defined analogously to \cite{DuarteFrancis}) is connected.
Our results can be summarized as follows.
 \begin{theorem}
 	\label{thm:main_result}
	For a braced \Pframework{} $(G,\rho)$,
	the following statements are equivalent:
	\begin{enumerate}
	  \item $(G,\rho)$ is rigid,
	  \item $G$ has no cartesian NAC-coloring, and
	  \item the bracing graph of $G$ is connected. 
	\end{enumerate}
	In particular, the minimum number of braces making a framework rigid
	is one less than the number of ribbons of its underlying graph. 
\end{theorem}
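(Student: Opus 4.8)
The plan is to prove the cycle of implications $(1)\Rightarrow(2)\Rightarrow(3)\Rightarrow(1)$, so that the absence of a cartesian NAC-coloring is the promised bridge between rigidity and connectivity of the bracing graph. Two elementary facts are used repeatedly. (i) In any NAC-coloring a braced parallelogram is monochromatic: its brace cuts it into two triangles, each triangle must be monochromatic (three edges cannot split into two of each colour), and the triangles share the brace. (ii) In a \emph{cartesian} NAC-coloring every ribbon is monochromatic: in a $4$-cycle whose two pairs of opposite edges lie in ribbons $R$ and $R'$, colouring two \emph{adjacent} edges red and the other two blue would join a pair of vertices by both a red and a blue path, which is forbidden; hence every $4$-cycle is monochromatic or has equicoloured opposite edges, and the ribbon relation propagates the colour. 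Thus a cartesian NAC-coloring of $G$ amounts to a surjective partition of the ribbons into a red family $\mathcal{A}$ and a blue family $\mathcal{B}$, the braces inheriting the (by (i) well-defined) colour of their parallelogram.

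For $(1)\Rightarrow(2)$ we prove the contrapositive: from a cartesian NAC-coloring we build a non-trivial flex of $(G,\rho)$. By the structural description of ribbons, once the edges of a ribbon $R$ are oriented coherently from one side to the other of the edge cut $G-E(R)$ they become translates of a common vector $v_R$. Let $M_t$ denote the rotation by the angle $t$ and define a placement $\rho_t$ by keeping $v_R$ for $R\in\mathcal B$ and replacing it by $M_t v_R$ for $R\in\mathcal A$. This assignment of edge vectors integrates to an honest placement because around every cycle the signed sum of the vectors of its red edges already vanishes at $t=0$ — the cycle crosses each ribbon-cut as often forwards as backwards — hence so does the perturbed sum for every $t$. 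The brace of a braced parallelogram equals $\pm(v_R+v_{R'})$ with $R,R'$ in the same family, so $M_t$ acts on it as a rotation and its length is preserved, and the edge lengths of $G$ are preserved since $|M_t v_R|=|v_R|$. Surjectivity provides a red and a blue edge whose mutual angle changes with $t$, so $\rho_t$ is not a rigid motion of $\rho$ and $(G,\rho)$ is flexible.

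For $(2)\Rightarrow(3)$ we again use the contrapositive: if the bracing graph is disconnected, partition its components into two non-empty families, colour each edge by its ribbon and each brace by the (common, since its two ribbons are adjacent in the bracing graph) colour of its parallelogram; this is a surjective edge colouring, and ribbon-cutting is what makes it a cartesian NAC-coloring. Indeed a cycle with a single red edge $e$ would yield a walk avoiding all red edges that crosses the cut $G-E(R)$ of the ribbon $R\ni e$, whereas the edges crossing that cut are exactly the edges of $R$ together with the diagonals of the braced parallelograms having $R$ among their ribbons — all red by construction — a contradiction; the symmetric statement excludes bichromatic cycles, and the same cut argument shows that no two distinct vertices are joined by both a red and a blue path, i.e.\ the colouring is cartesian. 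For $(3)\Rightarrow(1)$ we argue directly: under any flex every $4$-cycle remains a parallelogram (a simple quadrilateral with equal opposite sides is a parallelogram), so each ribbon has a direction $\theta_R(t)$, and a braced parallelogram stays congruent to itself, forcing $\theta_R(t)-\theta_{R'}(t)$ to be constant for its two ribbons; if the bracing graph is connected then all the $\theta_R(t)$ differ by constants, so the whole family of ribbon vectors — hence the placement — is obtained from $\rho$ by a single rotation, and the flex is trivial. Finally, by $(1)\Leftrightarrow(3)$ each brace contributes at most one edge to the bracing graph (a parallelogram has at most two ribbons), so at least (number of ribbons)$-1$ braces are necessary, and that many suffice by bracing one parallelogram per edge of a spanning tree of the (connected) graph on ribbons in which two ribbons are adjacent when they share a parallelogram.

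I expect the main obstacle to be the verification in $(2)\Rightarrow(3)$ that the component-based colouring is genuinely NAC \emph{and} cartesian, and equally the verification in $(1)\Rightarrow(2)$ that the prescribed edge vectors integrate to a placement and stretch no brace. Both rest on the structural properties of ribbons — that a ribbon is an edge cut, that its edges all point the same way across the cut, that a cycle crosses it an even number of times, and that reinstating the braces of the adjacent parallelograms does not reconnect the two sides — so the real work lies in those lemmas, which must be in place before the theorem is assembled.
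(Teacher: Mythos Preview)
Your cycle $(1)\Rightarrow(2)\Rightarrow(3)\Rightarrow(1)$ is valid, and the first two implications match the paper's Theorem~4.4 and one half of Theorem~4.5 essentially verbatim. The route diverges at $(3)\Rightarrow(1)$: the paper does \emph{not} argue this directly. Instead it closes the loop via the missing half of Theorem~4.5 (trivial: a connected bracing graph forces every ribbon-monochromatic colouring to be globally monochromatic) together with Theorem~4.2, which says that any non-trivial flex of a braced \Pframework{} yields a cartesian NAC-colouring. That theorem invokes the algebraic-curve machinery of \cite{flexibleLabelings}: on an irreducible curve of equivalent placements one considers the functions $W_{u,v}=(x_v-x_u)+\ci(y_v-y_u)$ and colours edges by a valuation-type criterion; the parallelogram identity $W_{u_1u_2}=W_{u_4u_3}$ forces ribbons to be monochromatic, and Lemma~4.1 makes the colouring cartesian. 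Your direct geometric argument---track the ribbon directions $\theta_R(t)$, use rigidity of a braced parallelogram to lock the differences $\theta_R-\theta_{R'}$ along bracing-graph edges, and conclude that a connected bracing graph forces a global rotation---is more elementary and is essentially the Duarte--Francis idea. What the paper's route buys is the stronger standalone statement (Theorem~4.2 holds with no reference to the bracing graph) and uniformity with the NAC-colouring technology used elsewhere.

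One step in your $(3)\Rightarrow(1)$ needs tightening. The parenthetical ``a simple quadrilateral with equal opposite sides is a parallelogram'' is true but does not justify ``under any flex every $4$-cycle remains a parallelogram'': you have not argued that the quadrilateral stays simple, and an antiparallelogram has the same four side lengths. The correct reason is continuity: for fixed side lengths the parallelogram and antiparallelogram loci meet only at collinear configurations, so near the injective initial placement every $4$-cycle remains a genuine parallelogram; your angle argument then shows the flex is a rigid motion on a neighbourhood of $t=0$, and an open--closed argument on $[0,1]$ (at any $t$ with $\rho_t$ congruent to $\rho$ one may restart the local argument) extends this to the whole interval. Without this step the key claim is unproven as stated.
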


We implement the concepts introduced in this paper by extending our \sage{} package \FlexRiLoG~\cite{flexrilog}.
We encourage the reader to experiment with the Jupyter notebook available on-line 
on \url{https://jan.legersky.cz/bracingFrameworks}.

The paper is organized as follows: \Cref{sec:prelim} recalls the notions from Rigidity theory and NAC-colorings.
The name \emph{cartesian} NAC-coloring is justified.
We define ribbons, parallelogram placements and \Pframework{}s in \Cref{sec:ribbons}.
Furthermore, ribbon-cutting graphs are defined in that section and we prove some results needed later in the paper.
We also construct recursively a subset of ribbon-cutting graphs and show that each graph
in this class has a parallelogram placement.
The class contains the underlying graphs of all frameworks corresponding to a slight generalization of rhombic carpets
(allowing parallelograms instead of rhombi).
Finally, we formalize bracing and the notion of bracing graph in our context.
\Cref{sec:flex} provides the proofs yielding \Cref{thm:main_result}.

\section{Preliminaries}\label{sec:prelim}
In this section we present basic notation and definitions.
The ideas are based on previous work using special edge colorings to find flexes of graphs.
We introduce these colorings here and describe what we mean by flexibility.

\begin{definition}
	Let $G=(V_G,E_G)$ be a connected graph.
	A map $\rho : V_G \rightarrow \RR^2$ such that $\rho(u) \neq \rho(v)$ for all edges $uv \in E_G$
	is a \emph{placement}.
	The pair $(G,\rho)$ is called a \emph{framework}.
\end{definition}
	
\begin{definition}
	Two frameworks $(G,\rho)$ and $(G,\rho')$ are \emph{equivalent} if 
	\begin{align*}
		\| \rho(u) - \rho(v) \| = \| \rho'(u) - \rho'(v)\|
	\end{align*}
	for all $uv \in E_G$.
	Two placements $\rho$ and $\rho'$ are \emph{congruent} if there exists a Euclidean
	isometry~$M$ of $\RR^2$ such that $M \rho'(v) = \rho(v)$ for all $v \in V_G$.
\end{definition}

\begin{definition}
	A \emph{flex} of the framework $(G,\rho)$ is a continuous path $t \mapsto \rho_t$, $t \in [0,1]$,
	in the space of placements of $G$ such that $\rho_0= \rho$ and each $(G,\rho_t)$ is equivalent to~$(G,\rho)$.
	The flex is called trivial if $\rho_t$ is congruent to $\rho$ for all $t \in [0,1]$.
	
	We define a framework to be \emph{(proper) flexible}
	if there is a non-trivial flex in $\RR^2$ (with injective placements).
	Otherwise it is called \emph{rigid}.
\end{definition}

In a previous paper we classify the graphs that have flexible frameworks by a special edge coloring, which is called NAC-coloring.
\begin{definition}
	Let~$G$ be a graph. A coloring of edges $\delta\colon  E_G\rightarrow \{\text{\blue{}, \red{}}\}$ 
	is called a \emph{NAC-coloring},
	if it is surjective and for every cycle in $G$,
	either all edges have the same color, or
	there are at least 2 edges in each color (see \Cref{fig:nac}).
	The NAC-coloring $\delta$ gives subgraphs
	\begin{align*}
		G_\red^\delta = (V_G, \{e\in E_G \colon \delta(e) = \red\})\, \text{ and }
		G_\blue^\delta = (V_G, \{e\in E_G \colon \delta(e) = \blue\})\,.
	\end{align*} 
\end{definition}

\begin{theorem}[\cite{flexibleLabelings}]
	\label{thm:nacflexible}
	A connected non-trivial graph allows a flexible framework if and only if it has a NAC-coloring.
\end{theorem}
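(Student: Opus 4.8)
The plan is to prove the two implications separately. The direction ``NAC-coloring $\Rightarrow$ flexible framework'' admits an explicit construction, whereas ``flexible framework $\Rightarrow$ NAC-coloring'' goes through a complexification of the plane and a valuation on the function field of the configuration curve; the latter is where I expect the real difficulty to lie.

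For ``$\Leftarrow$'', suppose $\delta$ is a NAC-coloring. Let $R_1,\dots,R_k$ and $B_1,\dots,B_\ell$ be the connected components of $G_{\red}^\delta$ and $G_{\blue}^\delta$, and write $R(v),B(v)$ for the components containing a vertex~$v$. Identifying $\RR^2$ with $\CC$, I would choose generic $a_1,\dots,a_k,b_1,\dots,b_\ell\in\CC$ and, for $\omega$ on the unit circle, set
\[
  \rho_\omega(v)=a_{R(v)}+\omega\,b_{B(v)}.
\]
For a red edge $uv$ one has $R(u)=R(v)$, hence $\rho_\omega(u)-\rho_\omega(v)=\omega\,(b_{B(u)}-b_{B(v)})$, whose length is independent of $\omega$; for a blue edge $B(u)=B(v)$, so $\rho_\omega(u)-\rho_\omega(v)=a_{R(u)}-a_{R(v)}$, again independent of $\omega$. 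Thus $t\mapsto\rho_{e^{2\pi\ci t}}$ is a flex. Surjectivity of $\delta$ forces $k\ge 2$ and $\ell\ge 2$: were $G_{\red}^\delta$ connected, a red path between the endpoints of a blue edge would close to a cycle with a single blue edge, contradicting the NAC property. With $k,\ell\ge 2$ and the $a_i,b_j$ generic the flex is non-trivial (a congruence relating $\rho_\omega$ to $\rho_1$ would force $\omega=1$), and one still has to check that $\rho_\omega$ can be taken injective; this last point is precisely where the ``no almost cycle'' condition is used, and if a given $\delta$ produces coincident vertices one passes to a more suitable NAC-coloring or refines the partition.

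For ``$\Rightarrow$'', let $(\rho_t)$ be a non-trivial flex and complexify: view placements as maps $V_G\to\CC$ and write $\|p-q\|^2=(W(p)-W(q))(Z(p)-Z(q))$ with the ``conjugate'' coordinates $W=x+\ci y$ and $Z=x-\ci y$ now treated as independent. The flex sweeps out an algebraic curve $C$ of complex placements equivalent to $\rho$ and not contained in a single congruence orbit; let $K=\CC(C)$, so that $W,Z\colon V_G\to K$ satisfy $(W(u)-W(v))(Z(u)-Z(v))=\ell_{uv}^2\in\CC^\times$ for every edge $uv$, whence $\nu(W(u)-W(v))+\nu(Z(u)-Z(v))=0$ for every place $\nu$ of $K/\CC$. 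Non-triviality ensures that, after exhausting the normalization freedom (translations and the $\CC^\times$-scaling $(W,Z)\mapsto(\mu W,\mu^{-1}Z)$), there is a place $\nu$ at which the $W$-coordinates genuinely degenerate. Colour $uv$ blue if $\nu(W(u)-W(v))<0$ (equivalently $\nu(Z(u)-Z(v))>0$) and red otherwise, absorbing the valuation-zero edges into the red class. Given any cycle $v_1\cdots v_m v_1$, the $W$-increments sum to zero, so by the ultrametric inequality $\min_i\nu(W(v_{i+1})-W(v_i))$ is attained at least twice; applying the same to the $Z$-increments shows $\max_i\nu(W(v_{i+1})-W(v_i))$ is attained at least twice as well. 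Distinguishing the cases according to whether a blue edge occurs on the cycle, these two facts give that the cycle is monochromatic or has at least two edges of each colour, and surjectivity holds because $\nu$ was chosen so that $W$ degenerates nontrivially, forcing valuations of both signs to appear.

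The main obstacle is the ``$\Rightarrow$'' direction, and within it the bookkeeping that makes the two-colouring simultaneously well defined and surjective: one must arrange the configuration curve and the place $\nu$ so that, after the normalization freedom is used up, the $W$-differences realize values of both signs while the edges of valuation zero can be harmlessly merged into one colour class. A lesser difficulty, in the ``$\Leftarrow$'' direction, is properness --- guaranteeing that the constructed $\rho_\omega$ is an injective placement --- which genuinely requires the absence of almost cycles and not just surjectivity of the colouring.
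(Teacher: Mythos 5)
Theorem~\ref{thm:nacflexible} is imported from \cite{flexibleLabelings} and not reproved in this paper, but your reconstruction follows the proof given there essentially verbatim: the rotation construction $\rho_\omega(v)=a_{R(v)}+\omega\, b_{B(v)}$ for the sufficiency of a NAC-coloring, and, for necessity, the valuation of the functions $W=x+\ci y$ and $Z=x-\ci y$ on the function field of the configuration curve together with the ultrametric inequality applied to the cycle sums $\sum W_{e}=\sum Z_{e}=0$ (with surjectivity coming, as you indicate, from normalizing one edge to valuation zero and locating a pole of some non-constant $W_e$). The only superfluous concern is injectivity of $\rho_\omega$ in the ``$\Leftarrow$'' direction: the theorem asserts only a flexible (not proper flexible) framework, so one merely needs $\rho_\omega(u)\neq\rho_\omega(v)$ for edges $uv$, which the NAC condition already supplies since a \red{} edge joins distinct \blue{} components and vice versa --- no passage to a ``more suitable'' NAC-coloring is needed (nor would one always exist).
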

Two non-adjacent vertices $u$ and $v$ overlap in the flex constructed in the proof of the theorem in \cite{flexibleLabelings}
if and only if there is a \red{} path from $u$ to $v$ and a \blue{} path from $u$ to~$v$.
In order to avoid overlapping vertices, we focus on a special type of NAC-colorings.
\begin{definition}
	A NAC-coloring $\delta$ of a graph $G$ is called \emph{cartesian}
	if no two distinct vertices are connected by a red and blue path simultaneously.
\end{definition}
	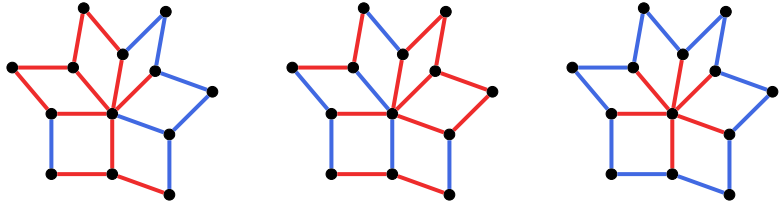
\begin{figure}[ht]
	  \centering
	  \begin{tikzpicture}[scale=0.8]
	    \node[gvertex] (a) at (0,0) {};
	    \node[gvertex] (b) at (1,0) {};
	    \node[gvertex,rotate around=40:(b)] (c) at ($(b)+(0,1)$) {};
	    \node[gvertex] (d) at ($(a)+(c)-(b)$) {};
	    \node[gvertex,rotate around=-10:(b)] (e) at ($(b)+(0,1)$) {};
	    \node[gvertex] (f) at ($(c)+(e)-(b)$) {};
	    \node[gvertex,rotate around=-45:(b)] (g) at ($(b)+(0,1)$) {};
	    \node[gvertex] (h) at ($(e)+(g)-(b)$) {};
	    \node[gvertex,rotate around=-110:(b)] (i) at ($(b)+(0,1)$) {};
	    \node[gvertex] (j) at ($(g)+(i)-(b)$) {};
	    \node[gvertex,rotate around=-180:(b)] (k) at ($(b)+(0,1)$) {};
	    \node[gvertex] (l) at ($(i)+(k)-(b)$) {};
	    \node[gvertex] (m) at ($(a)+(k)-(b)$) {};
	    \draw[redge] (a)--(b) (b)--(c) (c)--(d) (d)--(a) (b)--(e) (k)--(l) (b)--(k) (m)--(k) (e)--(f) (f)--(c) (b)--(g); 
	    \draw[bedge] (g)--(h) (h)--(e) (b)--(i) (i)--(j) (j)--(g) (l)--(i) (a)--(m);
	  \end{tikzpicture}
	  \qquad
	  \begin{tikzpicture}[scale=0.8]
	    \node[gvertex] (a) at (0,0) {};
	    \node[gvertex] (b) at (1,0) {};
	    \node[gvertex,rotate around=40:(b)] (c) at ($(b)+(0,1)$) {};
	    \node[gvertex] (d) at ($(a)+(c)-(b)$) {};
	    \node[gvertex,rotate around=-10:(b)] (e) at ($(b)+(0,1)$) {};
	    \node[gvertex] (f) at ($(c)+(e)-(b)$) {};
	    \node[gvertex,rotate around=-45:(b)] (g) at ($(b)+(0,1)$) {};
	    \node[gvertex] (h) at ($(e)+(g)-(b)$) {};
	    \node[gvertex,rotate around=-110:(b)] (i) at ($(b)+(0,1)$) {};
	    \node[gvertex] (j) at ($(g)+(i)-(b)$) {};
	    \node[gvertex,rotate around=-180:(b)] (k) at ($(b)+(0,1)$) {};
	    \node[gvertex] (l) at ($(i)+(k)-(b)$) {};
	    \node[gvertex] (m) at ($(a)+(k)-(b)$) {};
	    \draw[redge] (a)--(b)  (c)--(d) (b)--(e) (f)--(c) 
	    	(k)--(l) (b)--(g) (g)--(h) (h)--(e) (m)--(k) (b)--(i) (i)--(j) (j)--(g); 
	    \draw[bedge] (b)--(c) (d)--(a)  (e)--(f) (b)--(k) (a)--(m) (l)--(i);
	  \end{tikzpicture}
	  \qquad
	  \begin{tikzpicture}[scale=0.8]
	    \node[gvertex] (a) at (0,0) {};
	    \node[gvertex] (b) at (1,0) {};
	    \node[gvertex,rotate around=40:(b)] (c) at ($(b)+(0,1)$) {};
	    \node[gvertex] (d) at ($(a)+(c)-(b)$) {};
	    \node[gvertex,rotate around=-10:(b)] (e) at ($(b)+(0,1)$) {};
	    \node[gvertex] (f) at ($(c)+(e)-(b)$) {};
	    \node[gvertex,rotate around=-45:(b)] (g) at ($(b)+(0,1)$) {};
	    \node[gvertex] (h) at ($(e)+(g)-(b)$) {};
	    \node[gvertex,rotate around=-110:(b)] (i) at ($(b)+(0,1)$) {};
	    \node[gvertex] (j) at ($(g)+(i)-(b)$) {};
	    \node[gvertex,rotate around=-180:(b)] (k) at ($(b)+(0,1)$) {};
	    \node[gvertex] (l) at ($(i)+(k)-(b)$) {};
	    \node[gvertex] (m) at ($(a)+(k)-(b)$) {};
	    \draw[redge] (a)--(b) (b)--(e) (b)--(c) (b)--(g)  (b)--(i) (b)--(k); 
	    \draw[bedge]  (c)--(d) (d)--(a) (f)--(c) (k)--(l) (g)--(h)
	    	 (i)--(j) (j)--(g)(h)--(e) (m)--(k) (e)--(f) (a)--(m) (l)--(i);
	  \end{tikzpicture}
	  \caption{A coloring that is not NAC-coloring (left),  cartesian NAC-coloring (middle)
	  	and non-cartesian NAC-coloring (right).}
	  \label{fig:nac}
	\end{figure}
\begin{remark}
	\label{rem:cartesianNACcharacterization}
	A NAC-coloring $\delta$ of a graph $G$ is cartesian if and only if
	for every connected component $R$ of $G_\red^\delta$ and $B$ of $G_\blue^\delta$,
	the intersection of the vertex sets of $R$ and $B$ contains at most one vertex.
\end{remark}
Notice that in a cartesian NAC-coloring, a 4-cycle subgraph is monochromatic,
or the opposite edges have the same color. 

Recall that the cartesian product of graphs $G$ and $H$ is given by
\[
	G\cartProd H = (V_G \times V_H, \{(u,u')(v,v') \colon (u=v \land u'v' \in E_H) \lor (u'=v' \land uv \in E_G)\})\,.
\]
By coloring edges coming from $G$ by \red{} and the rest \blue{}, the following holds.
\begin{theorem}[\cite{Biswas,Hammack}]\label{thm:cartnac}
	The cartesian product of any two nontrivial graphs $G$ and $H$ has a cartesian NAC-coloring.
\end{theorem}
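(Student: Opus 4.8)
The plan is to verify directly that the coloring suggested just before the statement is a cartesian NAC-coloring. Explicitly, color an edge $(u,u')(v,v')$ of $G\cartProd H$ \emph{red} if $u'=v'$ and $uv\in E_G$, and \emph{blue} if $u=v$ and $u'v'\in E_H$. Since $G$ and $H$ are nontrivial, each has an edge, so both color classes are nonempty and the coloring is surjective. It remains to check the NAC property and that the coloring is cartesian.

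First I would check the NAC property. The key observation is that a red edge leaves the $H$-coordinate of a vertex unchanged, while a blue edge leaves the $G$-coordinate unchanged. Given a cycle $(a_0,b_0),(a_1,b_1),\dots,(a_k,b_k)=(a_0,b_0)$, the sequence $b_0,b_1,\dots,b_k$ is a closed walk in $H$ in which every step along a red edge is stationary and every step along a blue edge traverses an edge of $H$. Hence a cycle containing exactly one blue edge would force $b_j\neq b_{j+1}$ for a single index $j$ while $b_i$ is constant for all other indices, contradicting $b_0=b_k$. So any cycle containing a blue edge contains at least two, and symmetrically any cycle containing a red edge contains at least two; thus every cycle is monochromatic or has at least two edges of each color.

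Next I would check that the coloring is cartesian via the criterion in \Cref{rem:cartesianNACcharacterization}. The connected component of a vertex $(a,b)$ in the red subgraph is $C_G(a)\times\{b\}$, where $C_G(a)$ denotes the component of $a$ in $G$, because red edges move only within a fixed $H$-coordinate and realize exactly the edges of $G$ on each such layer; dually, the blue component of $(a,b)$ is $\{a\}\times C_H(b)$. Consequently every red component is contained in a set $V_G\times\{b\}$ and every blue component in a set $\{a\}\times V_H$, so the vertex sets of any red component and any blue component meet in at most one vertex, namely $(a,b)$. By \Cref{rem:cartesianNACcharacterization} the coloring is cartesian, completing the proof.

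I do not expect a genuine obstacle here: the argument is essentially the remark that $G\cartProd H$ carries two natural ``coordinate fibrations'' whose fibers are precisely the monochromatic components, and everything else is bookkeeping. The only points requiring a little care are the use of the nontriviality hypothesis to guarantee that both $E_G$ and $E_H$ are nonempty (hence surjectivity), and making the ``closed walk in $H$'' argument precise, for which it is convenient to index the cycle as a vertex sequence as above rather than as an edge set.
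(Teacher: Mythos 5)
Your proof is correct and uses exactly the coloring the paper indicates (edges inherited from $G$ red, those from $H$ blue); the paper itself does not prove \Cref{thm:cartnac} but cites it to external references, so your verification simply fills in the details of the intended argument. Both the projection argument for the NAC property and the identification of the monochromatic components with the coordinate fibers, combined with \Cref{rem:cartesianNACcharacterization}, are sound.
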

We remark that the statement of \Cref{thm:cartnac} has been pointed out by~\cite{Hammack} independently of~\cite{Biswas}.
In \cite{Biswas} a cartesian NAC-coloring is called \emph{good} 
since applying the grid construction described in~\cite{flexibleLabelings} 
yields a proper flexible framework, whereas for a non-cartesian NAC-coloring there are overlapping vertices. 
Our naming is motivated by the fact that the converse statement
can be proved using ideas from \cite{HammackImrichKlavzar} about embeddings of graphs into cartesian products.
\begin{theorem}
	\label{thm:cartNACimpliesSubgraph}
	If a graph $G$ has a cartesian NAC-coloring, then there are graphs $Q_1,Q_2$ with at least two vertices each 
	and an injective graph morphism $h:G \rightarrow Q_1\cartProd Q_2$ such that each vertex in $V_{Q_1}\cup V_{Q_2}$
	occurs as a coordinate of a vertex in $h(G)$. In particular, $G$ can be viewed as a subgraph of $Q_1 \cartProd Q_2$.
\end{theorem}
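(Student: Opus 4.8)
The plan is to coordinatize the vertices of $G$ by the connected components of its two monochromatic subgraphs, which is the standard way one recognizes an embedding into a cartesian product. Let $\delta$ be a cartesian NAC-coloring of $G$. Define $Q_1$ to be the graph whose vertices are the connected components of $G_\blue^\delta$, with two such components $B,B'$ joined by an edge whenever some \red{} edge of $G$ has one endpoint in $B$ and the other in $B'$; symmetrically, let $Q_2$ have the connected components of $G_\red^\delta$ as vertices, with two of them joined whenever a \blue{} edge of $G$ runs between them. For $v\in V_G$, write $B(v)$ and $R(v)$ for the blue and the red component containing $v$, and set $h(v)=(B(v),R(v))\in V_{Q_1}\times V_{Q_2}$.

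I would then verify the required properties in the following order. (i) \emph{Well-definedness and size:} a \red{} edge $uv$ cannot have both endpoints in one component of $G_\blue^\delta$, since otherwise $u\neq v$ would lie in the common vertex set of that blue component and of $R(u)=R(v)$, contradicting \Cref{rem:cartesianNACcharacterization}; hence no \red{} edge produces a loop in $Q_1$, so $Q_1$ is a simple graph, and since $\delta$ is surjective there is at least one \red{} edge, giving $B(u)\neq B(v)$ and so $|V_{Q_1}|\geq 2$. The same argument with the colors swapped handles $Q_2$. (ii) \emph{Morphism:} an edge $uv$ of $G$ is either \red{}, in which case $R(u)=R(v)$ while $B(u)B(v)\in E_{Q_1}$ by construction, so that $h(u)h(v)\in E_{Q_1\cartProd Q_2}$, or \blue{}, which is symmetric. (iii) \emph{Injectivity:} if $h(u)=h(v)$ with $u\neq v$, then $u$ and $v$ lie in the common vertex set of the blue component $B(u)$ and the red component $R(u)$, again contradicting \Cref{rem:cartesianNACcharacterization}. (iv) \emph{Coordinate coverage:} every vertex of $Q_1$ is a nonempty blue component $B$, so any $v\in B$ gives $h(v)$ with first coordinate $B$, and likewise for $Q_2$. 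Finally, the ``in particular'' statement follows because an injective graph morphism identifies $G$ with the subgraph of $Q_1\cartProd Q_2$ on vertex set $h(V_G)$ with edge set $h(E_G)$.

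I do not expect a real obstacle: the whole argument rests on the observation already recorded before \Cref{rem:cartesianNACcharacterization}, namely that in a cartesian NAC-coloring a \red{} edge keeps the red component fixed and moves to a new blue component (and vice versa). The only points that need a little attention are that surjectivity of $\delta$ is precisely what forces each of $Q_1,Q_2$ to have at least two vertices, and that the cartesian property (through \Cref{rem:cartesianNACcharacterization}) is invoked twice --- once so that monochromatic edges genuinely change the relevant coordinate, ensuring $h$ lands in the product graph rather than collapsing edges to loops, and once for the injectivity of $h$.
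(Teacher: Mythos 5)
Your proof is correct and follows essentially the same route as the paper: coordinatize each vertex by its pair of monochromatic components, form the two quotient graphs, and verify that the resulting map is an injective morphism with surjective coordinate projections. The only cosmetic difference is that you derive the fact that a monochromatic edge changes the other-colour component directly from \Cref{rem:cartesianNACcharacterization}, whereas the paper cites the corresponding general lemma for NAC-colorings from \cite{flexibleLabelings}; both are valid.
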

\begin{proof}
	Let $\delta$ be a cartesian NAC-coloring of $G$.
	Let $R_1, \dots, R_m$, resp.\ $B_1, \dots, B_n$, be the vertex sets of the connected components of $G_\red^\delta$, resp.\ $G_\blue^\delta$.
	Since $\delta$ is surjective and no \blue{} edge can connect vertices of the same \red{} component \cite[Lemma~2.4]{flexibleLabelings},
	$m\geq 2$ and $n\geq 2$.
	Let $\pi_\red : V_G \rightarrow \{R_1, \dots, R_m\}$ and $\pi_\blue : V_G \rightarrow \{B_1, \dots, B_m\}$
	map a vertex to the vertex set of its \red{}, resp.\ \blue, component, namely, $\pi_\red(v) = R_i$ and $\pi_\blue(v) = B_j$ if $v\in R_i \cap B_j$. 
	We define the following quotient graphs
	\begin{align*}
		Q_1 &= \left(\{R_1, \dots, R_m\}, \{\pi_\red(u)\pi_\red(v) \colon uv \in E_G \text{ and } \delta(uv)=\blue\}\right)\,, \\
		Q_2 &= \left(\{B_1, \dots, B_n\}, \{\pi_\blue(u)\pi_\blue(v) \colon uv \in E_G \text{ and } \delta(uv)=\red\}\right)\,.
	\end{align*}
	Let $Q$ be the cartesian product of $Q_1$ and $Q_2$, and $h:V_G\rightarrow V_Q$ be the graph morphism given by
	\begin{equation*}
		h(v) = (\pi_\red(v), \pi_\blue(v))\,.
	\end{equation*}
	We check that it is indeed a morphism: if $uv$ is an edge of $G$, w.l.o.g. \red{},
	then $\pi_\red(u)=\pi_\red(v)$ and $\pi_\blue(u)\neq\pi_\blue(v)$ from the properties of NAC-colorings.
	Thus, $h(u)h(v)=(\pi_\red(u), \pi_\blue(u))(\pi_\red(u), \pi_\blue(v))$ which is an edge of $Q$.
	The morphism $h$ is injective by \Cref{rem:cartesianNACcharacterization} since $\delta$ is cartesian.
	Each vertex in $V_{Q_1}\cup V_{Q_2}$ occurs as a coordinate of a vertex in~$h(G)$,
	since $\pi_\red$ and $\pi_\blue$ are surjective.
\end{proof}

\section{Ribbons and parallelogram placements}\label{sec:ribbons}
In this section we describe bracings of graphs (\Cref{sec:bracing}).
We mainly consider a class of graphs (\Cref{sec:classes}) which essentially consists of four-cycles
which we want to place in the plane, forming parallelograms.
Having these 4-cycles in mind we start by defining an equivalence relation on the edges.
The equivalence classes, called \emph{ribbons}, generalize the notion of rows and columns in a rectangular grid.
Ribbons are a concept that is also used in other places under various names (stripes, worms, de Bruijn lines)
and for different purpose (see for instance \cite{Bodini2011,Frettloh2013,Destainville2005,Zhou2019}).

\begin{definition}
	Let $G$ be a graph. Consider the relation on the set of edges, where
	two edges are in relation if they are opposite edges of a 4-cycle subgraph of $G$.
	An equivalence class of the reflexive-transitive closure of the relation is called a \emph{ribbon}.
	\Cref{fig:ribbons} shows all ribbons for some small graphs.
	A ribbon $r$ is \emph{simple} if the subgraph induced by $r$ does not contain any 4-cycle
	(see \Cref{fig:nonsimpleribbon} for an example of a non-simple ribbon).
\end{definition}
	\begin{figure}[ht]
	  \centering
	  \begin{tikzpicture}
	    \node[gvertex] (a) at (0,0) {};
	    \node[gvertex] (b) at (1,0) {};
	    \node[gvertex] (c) at (1,1) {};
	    \node[gvertex] (d) at ($(a)+(c)-(b)$) {};
	    \node[gvertex,rotate around=30:(b)] (e) at ($(b)+(0.8,0)$) {};
	    \node[gvertex] (f) at ($(e)+(c)-(b)$) {};
	    \node[gvertex,rotate around=-20:(e)] (g) at ($(e)+(1.1,0)$) {};
	    \node[gvertex] (h) at ($(g)+(f)-(e)$) {};
	    \node[gvertex,rotate around=-10:(c)] (i) at ($(c)+(0,0.9)$) {};
	    \node[gvertex] (j) at ($(f)+(i)-(c)$) {};
	    
	    \draw[edge] (a)--(b) (b)--(c) (c)--(d) (d)--(a) (b)--(e) (e)--(f) (f)--(c) (e)--(g) (g)--(h) (h)--(f) (c)--(i) (f)--(j) (i)--(j);
	    \draw[ribbon,col1] ($(a)!0.5!(d)$) -- ($(b)!0.5!(c)$) -- ($(e)!0.5!(f)$) -- ($(g)!0.5!(h)$);
	    \draw[ribbon,col2] ($(a)!0.5!(b)$) -- ($(d)!0.5!(c)$);
	    \draw[ribbon,col3] ($(b)!0.5!(e)$) -- ($(c)!0.5!(f)$) -- ($(i)!0.5!(j)$);
	    \draw[ribbon,col6] ($(e)!0.5!(g)$) -- ($(f)!0.5!(h)$);
	    \draw[ribbon,col4] ($(c)!0.5!(i)$) -- ($(f)!0.5!(j)$);
	  \end{tikzpicture}
	  \qquad
	  \begin{tikzpicture}
	    \node[gvertex] (a) at (0,0) {};
	    \node[gvertex] (b) at (1,0) {};
	    \node[gvertex,rotate around=40:(b)] (c) at ($(b)+(0,1)$) {};
	    \node[gvertex] (d) at ($(a)+(c)-(b)$) {};
	    \node[gvertex,rotate around=-10:(b)] (e) at ($(b)+(0,1)$) {};
	    \node[gvertex] (f) at ($(c)+(e)-(b)$) {};
	    \node[gvertex,rotate around=-45:(b)] (g) at ($(b)+(0,1)$) {};
	    \node[gvertex] (h) at ($(e)+(g)-(b)$) {};
	    \node[gvertex,rotate around=-110:(b)] (i) at ($(b)+(0,1)$) {};
	    \node[gvertex] (j) at ($(g)+(i)-(b)$) {};
	    \node[gvertex,rotate around=-180:(b)] (k) at ($(b)+(0,1)$) {};
	    \node[gvertex] (l) at ($(i)+(k)-(b)$) {};
	    \node[gvertex] (m) at ($(a)+(k)-(b)$) {};
	    \draw[edge] (a)--(b) (b)--(c) (c)--(d) (d)--(a) (b)--(e) (e)--(f) (f)--(c) (b)--(g) (g)--(h) (h)--(e) (b)--(i) (i)--(j) (j)--(g) (b)--(k) (k)--(l) (l)--(i) (a)--(m) (m)--(k);
	    \draw[ribbon,col1] ($(a)!0.5!(d)$) -- ($(b)!0.5!(c)$) -- ($(e)!0.5!(f)$);
	    \draw[ribbon,col2] ($(c)!0.5!(f)$) -- ($(b)!0.5!(e)$) -- ($(g)!0.5!(h)$);
	    \draw[ribbon,col3] ($(e)!0.5!(h)$) -- ($(b)!0.5!(g)$) -- ($(i)!0.5!(j)$);
	    \draw[ribbon,col4] ($(g)!0.5!(j)$) -- ($(b)!0.5!(i)$) -- ($(k)!0.5!(l)$);
	    \draw[ribbon,col5] ($(i)!0.5!(l)$) -- ($(b)!0.5!(k)$) -- ($(m)!0.5!(a)$);
	    \draw[ribbon,col6] ($(k)!0.5!(m)$) -- ($(b)!0.5!(a)$) -- ($(c)!0.5!(d)$);
	  \end{tikzpicture}
	  \caption{The ribbons of the graphs are indicated by dashed lines. All edges intersecting the line belong to the same ribbon.}
	  \label{fig:ribbons}
	\end{figure}
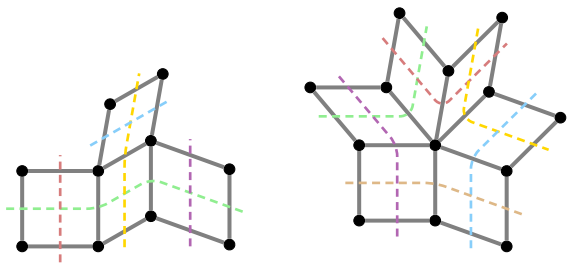
	
	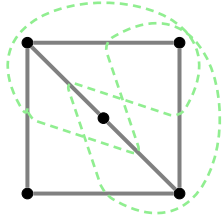
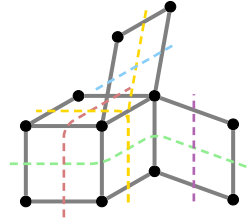
\begin{figure}[ht]
	  \centering
	  \begin{subfigure}[b]{0.45\textwidth}
	    \centering
	    \begin{tikzpicture}
				\clip (-0.3,-0.3) rectangle (2.55,2.55);
				\node[gvertex] (a) at (0,0) {};
				\node[gvertex] (b) at (2,0) {};
				\node[gvertex] (c) at (2,2) {};
				\node[gvertex] (d) at (0,2) {};
				\node[gvertex] (e) at (1,1) {};
				
				\draw[edge] (a)--(b) (b)--(c) (c)--(d) (d)--(a) (b)--(e) (e)--(d);
				\draw[ribbon,col1] ($(a)!0.5!(d)$) -- ($(b)!0.5!(e)$) -- ($(d)!0.5!(c)$) .. controls (3,3.2) and (3,-1.2) .. ($(a)!0.5!(b)$) -- ($(d)!0.5!(e)$) -- ($(b)!0.5!(c)$) .. controls (3.2,3) and (-1.2,3) .. cycle;
			\end{tikzpicture}
			\caption{The only ribbon is not simple.}
			\label{fig:nonsimpleribbon}
	  \end{subfigure}
	  \begin{subfigure}[b]{0.45\textwidth}
			\centering
			\begin{tikzpicture}
				\node[gvertex] (a) at (0,0) {};
				\node[gvertex] (b) at (1,0) {};
				\node[gvertex] (c) at (1,1) {};
				\node[gvertex] (d) at ($(a)+(c)-(b)$) {};
				\node[gvertex,rotate around=30:(b)] (e) at ($(b)+(0.8,0)$) {};
				\node[gvertex] (f) at ($(e)+(c)-(b)$) {};
				\node[gvertex,rotate around=-20:(e)] (g) at ($(e)+(1.1,0)$) {};
				\node[gvertex] (h) at ($(g)+(f)-(e)$) {};
				\node[gvertex,rotate around=-10:(c)] (i) at ($(c)+(0,1.2)$) {};
				\node[gvertex] (j) at ($(f)+(i)-(c)$) {};
				\node[gvertex] (k) at ($(d)+(f)-(c)$) {};
				
				\draw[edge] (a)--(b) (b)--(c) (c)--(d) (d)--(a) (b)--(e) (e)--(f) (f)--(c) (e)--(g) (g)--(h) (h)--(f) (c)--(i) (f)--(j) (i)--(j) (d)--(k) (f)--(k);
				\draw[ribbon,col1] ($(a)!0.5!(d)$) -- ($(b)!0.5!(c)$) -- ($(e)!0.5!(f)$) -- ($(g)!0.5!(h)$);
				\draw[ribbon,col2] ($(a)!0.5!(b)$) -- ($(d)!0.5!(c)$) -- ($(f)!0.5!(k)$);
				\draw[ribbon,col3] ($(b)!0.5!(e)$) -- ($(c)!0.5!(f)$) -- ($(i)!0.5!(j)$);
				\draw[ribbon,col3] ($(b)!0.5!(e)$) -- ($(c)!0.5!(f)$) -- ($(d)!0.5!(k)$);
				\draw[ribbon,col6] ($(e)!0.5!(g)$) -- ($(f)!0.5!(h)$);
				\draw[ribbon,col4] ($(c)!0.5!(i)$) -- ($(f)!0.5!(j)$);
			\end{tikzpicture}
			\caption{The yellow ribbon has three ``ends''.}
			\label{fig:splitribbon}
	  \end{subfigure}
	  \caption{Special cases that might happen for ribbons.}
	\end{figure}
In the case of rectangular grids, there is a natural way how to order the edges in a ribbon, i.e., a row or column.
In our context, there is no natural order of the edges in a ribbon as \Cref{fig:splitribbon} indicates.

From now on, given a walk $W$, the notation $\oriented{u}{v}\in W$ means that the edge $uv$ belongs to $W$ and $u$ precedes $v$ in $W$.
Similarly, for a ribbon $r$, the notation $\oriented{u}{v}\in r \cap W$ means $\oriented{u}{v}\in W$ and $uv\in r$.
If $\oriented{u}{v}\in r \cap W$ is used to iterate in a sum,
the edges of $W$ must be considered as a multiset:
the summand corresponding to $\oriented{u}{v}$ is included as many times as $uv$ occurs in $W$ with $u$ preceding $v$ in $W$.
Similarly for the cardinality of $r\cap W$:
\[
	|r\cap W| = \sum_{\oriented{u}{v}\in r \cap W} 1\,.
\]

Recall that a set of edges $r$ is an \emph{edge cut} of a connected graph $G$
if the graph $G \setminus r = (V_G, E_G \setminus r)$ is disconnected.   
\begin{lemma}
	\label{lem:twoComponents}
	Let $G$ be a connected graph with a simple ribbon $r$, which is an edge cut.
	Then $G\setminus r$ has exactly two connected components.
	In particular, if $w,w'\in V_G$ and $W$ is a walk from $w$ to $w'$, then $|r\cap W|$
	is odd if and only if $r$ separates $w$ and $w'$, i.e.,
	$w$ and $w'$ are in the different connected components of $G\setminus r$.
\end{lemma}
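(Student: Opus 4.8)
The plan is to show that the two ``sides'' of the edge cut given by $r$ are exactly the two connected components of $G\setminus r$, and that $r$ consists precisely of the edges of $G$ running between them; the parity statement then falls out by counting how often a walk switches sides. Two preliminary observations need nothing beyond the hypotheses. Since $r$ is an edge cut of the connected graph $G$, it is nonempty and $G\setminus r$ has connected components $K_1,\dots,K_p$ with $p\ge 2$; by maximality of a component, every edge of $G$ \emph{not} in $r$ has both endpoints in a single $K_i$, hence every edge joining two distinct components lies in $r$. Also, if a vertex set $S$ with $\varnothing\subsetneq S\subsetneq V_G$ has no edge of $G$ leaving it, then $G$ is disconnected; I will use this connectivity fact twice, always against the standing assumptions that $G$ is connected and $p\ge 2$.

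The core step uses the ribbon structure together with simplicity to show that \emph{all} edges of $r$ join one and the same pair of components. Because opposite edges of a $4$-cycle always lie in the same ribbon, for any $4$-cycle $Q$ of $G$ the number of edges of $Q$ lying in $r$ is $0$, $2$, or $4$; the value $4$ is impossible, since that would be a $4$-cycle inside the subgraph induced by $r$, contradicting that $r$ is simple. Consequently, whenever $e$ and $e'$ are opposite edges of a $4$-cycle $Q$ and both lie in $r$, the remaining two edges of $Q$ do not; writing the cycle as $u,v,v',u'$ in cyclic order with $e=uv$ and $e'=v'u'$ the opposite pair, the edges $vv'$ and $u'u$ are outside $r$, which forces $u,u'$ into one component and $v,v'$ into another, so $e$ and $e'$ join the same unordered pair of components. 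Since a ribbon is a single equivalence class of the $4$-cycle relation, this propagates along the whole ribbon: there is an unordered pair $\{A,B\}$ of components of $G\setminus r$, a priori possibly with $A=B$, such that every edge of $r$ joins $A$ and $B$.

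Now the degenerate cases are excluded by the connectivity fact. If $A=B$, then no edge of $G$ leaves $A$; but $p\ge 2$ forces $A\ne V_G$, a contradiction, so $A\ne B$. If some component $K_j$ were distinct from both $A$ and $B$, then no edge of $r$ is incident to $K_j$ and (by the first observation) no edge outside $r$ leaves $K_j$ either, so again $K_j=V_G$, impossible. Hence $p=2$, the components of $G\setminus r$ are exactly $A$ and $B$, and $r$ is precisely the set of edges of $G$ joining $A$ and $B$. For the remaining assertion, let $W=x_0x_1\cdots x_n$ be a walk from $w=x_0$ to $w'=x_n$. Since there are only the two components, an edge $x_ix_{i+1}$ lies in $r$ if and only if $x_i$ and $x_{i+1}$ lie on different sides, so $|r\cap W|$ counts the indices at which $W$ switches sides; this count is odd exactly when $x_0$ and $x_n$ lie on different sides, i.e.\ exactly when $r$ separates $w$ and $w'$.

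I expect the delicate point to be the core step: one must couple the purely local $4$-cycle analysis --- where simplicity is precisely what rules out a monochromatic $4$-cycle and thereby forces the complementary pair of edges out of $r$ --- with the global fact that a ribbon is connected under the generating relation, and only afterwards bring in the edge-cut hypothesis and connectivity of $G$ to kill the two degenerate possibilities $A=B$ and ``a third component''. By contrast, the bookkeeping observations and the closing parity count are routine.
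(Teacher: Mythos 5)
Your proof is correct and rests on the same key observation as the paper's: simplicity of $r$ forbids a $4$-cycle with all four edges in $r$, so when two opposite edges of a $4$-cycle both lie in $r$ the connecting pair does not, and the two ``sides'' of the ribbon propagate coherently along the equivalence class. The paper packages this as two explicit walks $(u_1,\dots,u_k)$ and $(v_1,\dots,v_k)$ in $G\setminus r$ linking all endpoints of $r$ to a fixed edge $uv$, whereas you phrase it as all edges of $r$ joining one fixed pair of components and then rule out the degenerate cases by connectivity; these are the same argument in different bookkeeping.
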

\begin{proof}
	Let $uv$ be an edge of $r$.
	For every edge $u'v'\in r$,
	there exists a sequence of edges $u_1v_1, \dots, u_kv_k$ such that
	$u=u_1, v=v_1, u_k=u', v_k=v'$ and $(u_i,v_i,v_{i+1}, u_{i+1})$ is a 4-cycle in $G$.
	Hence, there are walks $(u_1, \dots, u_k)$ and $(v_1, \dots, v_k)$ in $G$.
	An edge $u_iu_{i+1}$ is in $r$ if and only if $v_iv_{i+1}$ is in $r$.
	But if $u_iu_{i+1},v_iv_{i+1}\in r$, then $(u_i,v_i,v_{i+1}, u_{i+1})$
	would be a 4-cycle in the subgraph induced by $r$, which is not possible since $r$ is simple.
	Hence, no edge of the two walks is in $r$.
	This shows that every vertex of an edge in $r$ is either connected to $u$, or $v$ in $G\setminus r$,
	thus, $G\setminus r$ has two connected components.
	
	If $W$ is a walk from $w$ to $w'$, then $|W\cap r|$ is even if and only 
	if $w$ and $w'$ are in the same connected component of $G\setminus r$.
\end{proof}

We want to consider graphs that somehow consist of parallelograms.
For interpreting this idea we need to look at frameworks rather than graphs.
\begin{definition}
	Let $G$ be a connected graph.
	A placement $\rho:V_G\rightarrow \RR^2$ for $G$ such that $\rho$ 
	is injective and each 4-cycle in $G$ forms a parallelogram in $\rho$
	is called a \emph{parallelogram placement}.
\end{definition}

\begin{remark}
	\label{rem:ribbonsAreParallel}
	Let $\rho$ be a parallelogram placement of a connected graph $G$.
	Edges of a ribbon of $G$ are parallel line segments of the same length in $\rho$.
\end{remark}

\begin{remark}
	\label{rem:parallelogramPlacementImpliesSimpleRibbons} 
	By \Cref{rem:ribbonsAreParallel}, if there was a 4-cycle induced by a ribbon,
	then it would be a degenerate rhombus in a parallelogram placement,
	which contradicts injectivity of the placement.	
	Hence, if a graph allows a parallelogram placement, then all its ribbons are simple.
\end{remark}

The following properties of parallelogram placements are needed later on.
\begin{lemma}
	\label{lem:ribbonTranslation}
	Let $G$ be a connected graph with a parallelogram placement $\rho$
	and ribbon~$r$ which is an edge cut.
	If the vertex set of $r$ is $V_1\cup V_2$,
	where all vertices of $V_i$ belong to the same connected component of $G\setminus r$,
	then $\rho(V_2)$ is a translation of $\rho(V_1)$. 
	In particular, the vector $\rho(u_2)-\rho(u_1)$ is the same for all edges $u_1u_2\in r$, $u_i\in V_i$.
\end{lemma}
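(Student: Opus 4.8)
The plan is to assign to each edge of the ribbon $r$ the vector pointing from its endpoint in $V_1$ to its endpoint in $V_2$, and then to show this vector is the same for all edges of $r$. First I would record the structure of $r$. Since $G$ has a parallelogram placement, \Cref{rem:parallelogramPlacementImpliesSimpleRibbons} shows $r$ is simple, so \Cref{lem:twoComponents} applies and $G\setminus r$ has exactly two connected components; by the hypothesis on $V_1,V_2$ these are the component containing $V_1$ and the one containing $V_2$, and the proof of \Cref{lem:twoComponents} shows that the two endpoints of any edge of $r$ lie in different components. Hence each $e\in r$ can be written uniquely as $e=ab$ with $a\in V_1$ and $b\in V_2$, and I set $t_e:=\rho(b)-\rho(a)$. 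Since $V_1\cup V_2$ is precisely the set of endpoints of edges of $r$, once I know $t_e$ equals a fixed vector $t$ for every $e\in r$, the translation claim follows at once: each $b\in V_2$ is the $V_2$-endpoint of some $e=ab\in r$, so $\rho(b)=\rho(a)+t\in\rho(V_1)+t$, and symmetrically $\rho(V_1)+t\subseteq\rho(V_2)$, whence $\rho(V_2)=\rho(V_1)+t$.

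Next I would establish the atomic step: if $e=ab$ and $e'=a'b'$ are edges of $r$ (with $a,a'\in V_1$ and $b,b'\in V_2$) that are opposite edges of some 4-cycle $C$ of $G$, then $t_e=t_{e'}$. The four vertices $a,b,a',b'$ are distinct, and going around $C$ from $a$ to $b$ along $e$ the remaining two vertices occur in one of the cyclic orders $(a,b,a',b')$ or $(a,b,b',a')$. In the first order the edges $ba'$ and $b'a$ of $C$ each have one endpoint in $V_1$ and one in $V_2$, hence both lie in $r$; then all four edges of $C$ belong to $r$, so $C$ is a 4-cycle in the subgraph induced by $r$, contradicting simplicity of $r$. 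So the cyclic order is $(a,b,b',a')$, and since $C$ forms a parallelogram in $\rho$, equality of its opposite sides reads $\rho(b)-\rho(a)=\rho(b')-\rho(a')$ for this cyclic order, i.e. $t_e=t_{e'}$.

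Finally I would finish by transitivity. Any two edges of $r$ are either equal or joined by a finite sequence of edges in which consecutive ones are opposite edges of a common 4-cycle; every edge of such a sequence lies in $r$ and so has the form $ab$ above, and the atomic step equates the associated vectors across each link. Therefore $t_e$ is independent of $e\in r$; calling this vector $t$ gives the ``in particular'' statement, and the set equality $\rho(V_2)=\rho(V_1)+t$ follows as explained in the first paragraph. I expect the atomic step to be the only real obstacle, and inside it the decisive point is excluding the ``crossed'' cyclic order $(a,b,a',b')$: this is precisely where the two hypotheses on $r$ enter --- that $r$ is an edge cut (through \Cref{lem:twoComponents}, so that the two endpoints of every $r$-edge lie in different components) and that $r$ is simple (through \Cref{rem:parallelogramPlacementImpliesSimpleRibbons}). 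Once the cyclic order is forced, the parallelogram condition does the rest.
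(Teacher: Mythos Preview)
Your proof is correct and follows the same approach as the paper: both use \Cref{rem:parallelogramPlacementImpliesSimpleRibbons} for simplicity, \Cref{lem:twoComponents} for the two-component partition, and the parallelogram property along a chain of 4-cycles to propagate the fixed vector (the paper compresses this last step into a citation of \Cref{rem:ribbonsAreParallel}). Your atomic step, ruling out the cyclic order $(a,b,a',b')$ via simplicity, is exactly the orientation argument the paper leaves implicit.
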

\begin{proof}
	The ribbon $r$ is simple by \Cref{rem:parallelogramPlacementImpliesSimpleRibbons}.
	\Cref{lem:twoComponents} gives the partition $V_1\cup V_2$ with $|V_1|=|V_2|$.
	The vector $\rho(u_2)-\rho(u_1)$ is the same for all $u_1u_2\in r$, $u_i\in V_i$,
	by \Cref{rem:ribbonsAreParallel}. 
\end{proof}

\begin{lemma}
	\label{lem:sumOfVectorsInRibbon}
	Let $G$ be a connected graph with a parallelogram placement $\rho$.
	Let $r$ be a ribbon of $G$ which is an edge cut and $W$ be a walk in $G$.
	If $|r\cap W|$ is even, then
	\begin{equation*}
		\sum_{\substack{\oriented{w_1}{w_2}\in r \cap W }} (\rho(w_2)-\rho(w_1)) = 0\,.
	\end{equation*}
\end{lemma}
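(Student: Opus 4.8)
The plan is to reduce the claim to \Cref{lem:ribbonTranslation} by grouping the edges of $r\cap W$ into pairs that cancel. First I would invoke \Cref{rem:parallelogramPlacementImpliesSimpleRibbons} to see that $r$ is simple, so \Cref{lem:twoComponents} applies and gives a partition $V_1\cup V_2$ of the vertex set of $r$ according to the two connected components $C_1,C_2$ of $G\setminus r$. By \Cref{lem:ribbonTranslation} there is a fixed vector $t$ such that $\rho(u_2)-\rho(u_1)=t$ for every edge $u_1u_2\in r$ with $u_i\in V_i$; consequently, for any oriented edge $\oriented{w_1}{w_2}\in r$, the contribution $\rho(w_2)-\rho(w_1)$ equals $+t$ if $w_1\in C_1,w_2\in C_2$ and $-t$ if $w_1\in C_2,w_2\in C_1$. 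Thus the sum in question equals $(a-b)\,t$, where $a$ is the number of oriented edges of $r\cap W$ crossing from $C_1$ to $C_2$ and $b$ the number crossing from $C_2$ to $C_1$ (counted with multiplicity, as prescribed by the multiset convention for $r\cap W$).

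It then remains to show $a=b$. The key observation is that $W$ is a walk, so as we traverse $W$ the component we are currently in ($C_1$ or $C_2$) changes exactly when we cross an edge of $r$, and it is unchanged when we cross any other edge (an edge not in $r$ stays within one component). Hence, reading off the sequence of component-memberships of the vertices of $W$, each crossing of $r$ in the $C_1\to C_2$ direction must be matched by a later crossing back, $C_2\to C_1$, before the walk can cross $C_1\to C_2$ again, and vice versa; in other words the crossings of $r$ along $W$ alternate in direction. If $W$ has the same endpoints' component membership — which is exactly what $|r\cap W|$ even encodes, since $|r\cap W|=a+b$ and parity of $a+b$ equals parity of $a-b$, so $a+b$ even is equivalent to $a-b$ even, but more strongly the alternation forces $|a-b|\le 1$ — then $a-b=0$. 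Combining $|a-b|\le 1$ with $a-b$ even yields $a=b$, and therefore the sum is $0\cdot t=0$.

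I would phrase the alternation argument cleanly by defining $f\colon V_G\to\{0,1\}$ with $f(v)=0$ if $v\in C_1$ and $f(v)=1$ if $v\in C_2$, noting $f(w_2)-f(w_1)=\pm1$ for $\oriented{w_1}{w_2}\in r\cap W$ and $f(w_2)-f(w_1)=0$ for all other edges of $W$, so that telescoping along $W=(x_0,x_1,\dots,x_\ell)$ gives
\begin{equation*}
	\sum_{\oriented{w_1}{w_2}\in r\cap W}\bigl(f(w_2)-f(w_1)\bigr)=f(x_\ell)-f(x_0)\in\{-1,0,1\},
\end{equation*}
while the left-hand side equals $a-b$; since $|r\cap W|=a+b$ is even, $a-b$ is even, hence $a-b=0$. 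Then
\begin{equation*}
	\sum_{\oriented{w_1}{w_2}\in r\cap W}\bigl(\rho(w_2)-\rho(w_1)\bigr)=(a-b)\,t=0,
\end{equation*}
as desired.

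I expect the only genuine subtlety to be bookkeeping: being careful that the multiset convention for $r\cap W$ (a repeated edge, traversed possibly in both directions) is handled consistently, so that the decomposition of the sum into $a\cdot t - b\cdot t$ is literally correct; everything else is a short telescoping argument built on \Cref{lem:twoComponents} and \Cref{lem:ribbonTranslation}.
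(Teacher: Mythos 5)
Your proposal is correct and follows essentially the same route as the paper: both rest on the fixed translation vector from \Cref{lem:ribbonTranslation} together with the fact that crossings of $r$ along $W$ alternate between the two components of $G\setminus r$ (since edges outside $r$ do not change the component), so an even number of crossings forces the $+t$ and $-t$ contributions to cancel. Your telescoping of the component-indicator function is just a tidier phrasing of the paper's explicit observation that the vertices $u_{j_1},u_{j_1+1},\dots,u_{j_k},u_{j_k+1}$ alternate between $V_1$ and $V_2$.
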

\begin{proof}
	Let $W=(u_0, u_1, \dots, u_m)$ be a walk.
	All ribbons are simple by \Cref{rem:parallelogramPlacementImpliesSimpleRibbons}.
	Let $V_1\cup V_2$ be as in \Cref{lem:ribbonTranslation}.
	Let the edges of $W$ that are in $r$ be $u_{j_1}u_{j_1+1},\dots, u_{j_k}u_{j_k+1}$
	with $j_1<j_2<\dots<j_k$, $k$ is even by assumption.
	We have that $u_{j_1}, u_{j_2+1},u_{j_3}\dots, u_{j_k+1}\in V_1$
	and $u_{j_1+1}, u_{j_2},u_{j_3+1}\dots, u_{j_k}\in V_2$.
	By \Cref{lem:ribbonTranslation},
	\begin{equation*}
		\sum_{i=1}^k \left(\rho(u_{j_i+1})-\rho(u_{j_i})\right) = 0\,.
	\end{equation*}
\end{proof}

\subsection{Frameworks and graphs consisting of parallelograms}\label{sec:classes}
We now consider classes of graphs which have parallelogram placements.
For this we use three different approaches, each having advantages.
The main property for a graph is the existence of a parallelogram placement.
This existence yields a so called \Pframework.
An illustrating approach is to start from a set of connected parallelograms with additional properties and form a graph.
This will be a \parallelogramTilingFramework.
Finally, we also present a recursive construction for a class of graphs which have a parallelogram placement.
Furthermore, we describe the relations between the different approaches.

\begin{definition}
	A graph $G$ is called \emph{ribbon-cutting graph} if it is connected and every ribbon is an edge cut.
	If $\rho$ is a parallelogram placement of $G$, we call the framework $(G,\rho)$ a \emph{\Pframework{}}.
\end{definition}
A rectangular lattice graph (grid graph) with its natural placement is a \Pframework.
as well as the frameworks in \Cref{fig:introcarpet} and the graphs in \Cref{fig:ribbons,fig:splitribbon} with the placements given by their layouts.

There are ribbon-cutting graphs without any parallelogram placement.
\Cref{fig:nopplacement} shows such a graph, for which the non-existence of a parallelogram placement
follows from failing one of the necessary conditions given by \Cref{thm:ribbonCutGraphProperties}.
On the other hand, the graph in \Cref{fig:nonSeparatingRibbon} is not ribbon-cutting
but has a parallelogram placement.

\begin{figure}[ht]
  \centering
  \begin{tikzpicture}
    \node[fvertex] (a) at (-0.5,0) {};
    \node[fvertex] (b) at (1.3,0) {};
    \node[fvertex] (c) at (1.3,1) {};
    \node[fvertex] (d) at ($(a)+(c)-(b)$) {};
    \node[fvertex,rotate around=30:(b)] (e) at ($(b)+(0.8,0)$) {};
    \node[fvertex] (f) at ($(e)+(c)-(b)$) {};
    \node[fvertex] (g) at ($(d)+(f)-(c)$) {};
    \node[fvertex] (h) at ($(a)+(g)-(d)$) {};
    \node[fvertex,colR] (ip) at ($(b)+(h)-(a)-(0.25,0)$) {};
    
    \draw[edge] (a)--(b) (b)--(c) (c)--(d) (d)--(a) (b)--(e) (e)--(f) (f)--(c) (d)--(g) (f)--(g) (a)--(h) (g)--(h);
    \draw[edge,colR] (b)--(ip) (ip)--(h);
    \draw[ribbon,col1] ($(g)!0.5!(h)$) -- ($(a)!0.5!(d)$) -- ($(b)!0.5!(c)$) -- ($(e)!0.5!(f)$);
    \draw[edge] (c)--(d);
    \draw[ribbon,col3] ($(ip)!0.5!(h)$) -- ($(a)!0.5!(b)$) -- ($(c)!0.5!(d)$) -- ($(f)!0.5!(g)$);
    \draw[ribbon,col4] ($(b)!0.5!(e)$) -- ($(c)!0.5!(f)$) -- ($(d)!0.5!(g)$) -- ($(a)!0.5!(h)$) -- ($(b)!0.5!(ip)$);
  \end{tikzpicture}
  \caption{The graph of the framework is ribbon-cutting, 
  but it has no parallelogram placement:
  if the red vertex and edges were placed forming a parallelogram, two vertices would coincide. 
  \Cref{thm:ribbonCutGraphProperties} also shows this,
  since the two vertices are not separated by a ribbon.}
  \label{fig:nopplacement}
\end{figure}
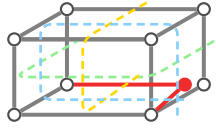

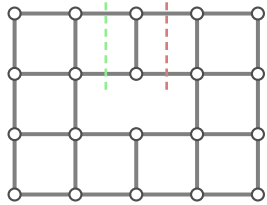
\begin{figure}[ht]
  \centering
  \begin{tikzpicture}[scale=0.8]
	\foreach \j in {0,1,3,4}
	{
		\draw[edge] ($(\j,0)$)--($(\j,3)$0);
	}
	\draw[edge] (2,0)--(2,1) (2,2)--(2,3);
	\foreach \j in {0,1,2,3}
	{
		\draw[edge] ($(0,\j)$)--($(4,\j)$0);
		\foreach \i in {0,1,2,3,4}
		{
			\node[fvertex] (a) at ($\i*(1,0)+\j*(0,1)$) {};
		};
	}
	\draw[ribbon,col1] (1.5, 2) -- (1.5,3);
	\draw[ribbon,col2] (2.5, 2) -- (2.5,3);
  \end{tikzpicture}
  \caption{A parallelogram placement of a graph with ribbons that are
  not edge cuts.}
  \label{fig:nonSeparatingRibbon}
\end{figure}

\begin{theorem}
	\label{thm:ribbonCutGraphProperties}
	If $(G,\rho)$ is a \Pframework,
	then there are no 3-cycles in $G$ and every two vertices are separated by a ribbon.
\end{theorem}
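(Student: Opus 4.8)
The plan is to exploit the three lemmas about parallelogram placements and ribbons just established. First I would record the setup: since $(G,\rho)$ is a \Pframework{}, the graph $G$ is ribbon-cutting, so every ribbon is an edge cut, and by \Cref{rem:parallelogramPlacementImpliesSimpleRibbons} every ribbon is simple; hence \Cref{lem:twoComponents} and \Cref{lem:sumOfVectorsInRibbon} apply to every ribbon of $G$. I would also use repeatedly that $\rho$ is injective and that the ribbons partition $E_G$.

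For the statement that every two vertices are separated by some ribbon, I would argue by contradiction. Suppose $u\neq v$ but no ribbon separates them, and pick any walk $W$ from $u$ to $v$ (one exists since $G$ is connected). For each ribbon $r$, the fact that $r$ does not separate $u$ and $v$ forces $|r\cap W|$ to be even by \Cref{lem:twoComponents}, whence $\sum_{\oriented{w_1}{w_2}\in r\cap W}(\rho(w_2)-\rho(w_1))=0$ by \Cref{lem:sumOfVectorsInRibbon}. Since the ribbons partition the edge set of $G$, summing these identities over all ribbons telescopes to $\rho(v)-\rho(u)=\sum_{\oriented{w_1}{w_2}\in W}(\rho(w_2)-\rho(w_1))=0$, contradicting injectivity of $\rho$.

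For the absence of 3-cycles I would again argue by contradiction, reusing the part just proved. Suppose $uvw$ is a 3-cycle in $G$; by the previous paragraph there is a ribbon $r$ separating $u$ and $v$. Applying \Cref{lem:twoComponents} to the one-edge walk $(u,v)$ shows $|r\cap(u,v)|$ is odd, so $uv\in r$; applying it to the walk $(u,w,v)$ shows $|r\cap(u,w,v)|$ is odd, so exactly one of $uw$, $vw$ lies in $r$, say $uw$ (the other case is symmetric). Now $uv$ and $uw$ are two edges of $r$ sharing the vertex $u$, and by \Cref{rem:ribbonsAreParallel} they are parallel segments of equal length in $\rho$; being parallel and sharing an endpoint they are collinear, and the equal length then forces $\rho(v)=\rho(w)$, contradicting injectivity.

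The argument is essentially routine once the lemmas of this section are in place; the only point requiring a little care is the bookkeeping in the summation step, namely that the edge multiset of $W$ is the disjoint union over ribbons $r$ of the multisets $r\cap W$, so that the vanishing of each ribbon's contribution really does add up to the full displacement $\rho(v)-\rho(u)$. The mild simplification is recognizing that the separation statement can be fed back into the proof of triangle-freeness, which avoids a separate case analysis on which ribbons the three edges of the triangle belong to.
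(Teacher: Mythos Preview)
Your argument for the separation statement is correct and is essentially the paper's argument recast contrapositively; the bookkeeping remark about the ribbon partition of the edge multiset of $W$ is exactly the right point.

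The 3-cycle argument, however, has a genuine gap. From $uv,uw\in r$ and \Cref{rem:ribbonsAreParallel} you correctly get that $\rho(u)\rho(v)$ and $\rho(u)\rho(w)$ are parallel segments of equal length sharing the endpoint $\rho(u)$, hence collinear. But equal length does \emph{not} force $\rho(v)=\rho(w)$: the two segments may point in opposite directions from $\rho(u)$, giving $\rho(w)=2\rho(u)-\rho(v)$, which is a perfectly good injective placement of the three vertices. So the contradiction does not follow as stated.

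The fix is easy and stays within your framework. Having shown $uv,uw\in r$ and $vw\notin r$, apply your separation result again, now to $v$ and $w$: some ribbon $s$ separates them, so $vw\in s$ (from the walk $(v,w)$) and exactly one of $uv,uw$ lies in $s$ (from the walk $(v,u,w)$). Since ribbons partition $E_G$ and $uv,uw\in r$, this forces $s=r$; but then $vw\in r$, contradicting $vw\notin r$. Alternatively, you can follow the paper's route: argue directly that all three edges of the triangle lie in a single ribbon (any ribbon meeting the triangle in exactly one edge would fail, via \Cref{lem:twoComponents}, to separate that edge's endpoints), and then observe that three pairwise parallel segments of equal length cannot form a nondegenerate triangle.
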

\begin{proof}
	Suppose for contradiction that there is a 3-cycle $(u,v,w)$ in $G$.
	All edges of the 3-cycle must be in the same ribbon, otherwise there is a ribbon which is not an edge cut.
	By~\Cref{rem:ribbonsAreParallel}, the line segments $\rho(u)\rho(v), \rho(u)\rho(w)$ and $\rho(v)\rho(w)$
	are parallel and have the same lengths, which is not possible in the triangle.
	  
	Let $u$ and $v$ be two distinct vertices.
	\Cref{rem:parallelogramPlacementImpliesSimpleRibbons} guarantees that all ribbons are simple.
	Let $W=(u=u_0, u_1, \dots, u_m=v)$ be a walk.
	Let $R$ be the set of ribbons which contain at least one edge of $W$.
	Since $u$ and $v$ are distinct and $\rho$ is injective, we have
	\begin{equation*}
		0 \neq \rho(u_m) - \rho(u_0) = \sum_{i=1}^m \left(\rho(u_i)-\rho(u_{i-1})\right) 
			= \sum_{r\in R} \,\, \sum_{\substack{\oriented{w_1}{w_2}\in r \cap W }} (\rho(w_2)-\rho(w_1))\,.
	\end{equation*}
	All ribbons $r$ such that $|r\cap W|$ is even have a zero contribution by \Cref{lem:sumOfVectorsInRibbon}.
	Hence, there must be a ribbon $r'$ such that $|r' \cap W|$ is odd.
	The ribbon $r'$ separates $u$ and $v$ by \Cref{lem:twoComponents}. 
\end{proof}

The following definition is a slight generalization of the class of graphs used in \cite{DuarteFrancis} using parallelograms instead of rhombi.
\begin{definition}
	\label{def:carpetFramework}
	Let $S$ be a finite set of arbitrary parallelograms in $\RR^2$ (including interiors) such that:
	\begin{itemize}
	  \item if a point belongs to two parallelograms, then it is either a vertex of both, or an interior point of an edge of both,
	  \item if a point belongs to more than two parallelograms, then it is vertex of all of them,
	  \item the boundary of the union $\bigcup S$ is a simple polygon. 
	\end{itemize}
	The framework obtained by taking the 1-skeleton of $S$ together with the vertex positions
	is called a \emph{\parallelogramTilingFramework} (see \Cref{fig:introcarpet} for an example).
\end{definition}

We define a more general class of graphs having the ribbon-cutting property
than the underlying graphs of \parallelogramTilingFramework{}s.
The definition is done recursively adding vertices in a way that a parallelogram placement can be extended
(as we will see in \Cref{lem:placementExtension}).
\begin{definition}
	\label{def:Grec}
	We define the class of graphs $\Grec$ recursively.
	The 4-cycle graph is in~$\Grec$.
	There are two types of construction (see also \Cref{fig:construct}):
	\begin{description}[font=\normalfont]
	  \item[\addPar :] If $G\in\Grec$ with $uv\in E_G$,
		then $(V_G\cup\{w_1, w_2\}, E_G\cup\{uw_1, w_1w_2, w_2v\})$,
		where $w_1,w_2\notin V_G$, is in $\Grec$.
	  \item[\closePar :] If $G\in\Grec$ with $uv, vw\in E_G$ and
	  	the vertex $v$ is separated from any vertex in $V_G\setminus\{u,v,w\}$
	  	by a ribbon which does not contain $uv$ or $vw$, then
			the graph $(V_G\cup\{w'\}, E_G\cup\{uw', w'w\})$, where $w'\notin V_G$, 
			is in $\Grec$.\\
			Note that the separation assumption is needed for avoiding situations as described in \Cref{fig:nopplacement}.
	\end{description}
\end{definition}

\begin{figure}[htb]
	\centering
	\begin{tikzpicture}[scale=1.2]
		\draw[genericgraph,pattern=north east lines,pattern color=black!10!white] (0.5,-0.2) circle[x radius=1cm, y radius=0.6cm];
		\draw[genericgraph,pattern=north east lines,pattern color=black!10!white] (3.5,-0.2) circle[x radius=1cm, y radius=0.6cm];
		\node[gvertex,indicatededge=south,rotate=-90,label={[labelsty]below:$u$}] (a) at (0,0) {};
		\node[gvertex,indicatededge=south,rotate=-90,label={[labelsty]above:$v$}] (b) at (1,0) {};
		\node[gvertex,indicatededge=south,rotate=-90,label={[labelsty]below:$u$}] (c) at (3,0) {};
		\node[gvertex,indicatededge=south,rotate=-90,label={[labelsty]above:$v$}] (d) at (4,0) {};
		\node[ngvertex,label={[labelsty]left:$w_1$}] (e) at (3.2,0.8) {};
		\node[ngvertex,label={[labelsty]right:$w_2$}] (f) at (4.2,0.8) {};
		
		\draw[edge] (a)edge(b) (c)edge(d);
		\draw[nedge] (e)edge(c) (f)edge(d) (e)edge(f);
		\draw[ultra thick,->] (1.75,-0.2) -- (2.25,-0.2);
		\node[labelsty] at (2,-0.9) {\addPar};
	\end{tikzpicture}
	\qquad
	\begin{tikzpicture}[scale=1.2]
		\draw[genericgraph,pattern=north east lines,pattern color=black!10!white] (0.5,-0.2) circle[x radius=1cm, y radius=0.6cm];
		\draw[genericgraph,pattern=north east lines,pattern color=black!10!white] (3.5,-0.2) circle[x radius=1cm, y radius=0.6cm];
		\node[gvertex,indicatededge=south,rotate=-90,label={[labelsty]above:$u$}] (a) at (0,0.2) {};
		\node[gvertex,indicatededge=south,rotate=-90,rotate around=-40:(a),label={[labelsty,label distance=-2pt]90:$v$}] (b) at ($(a)+(0.8,0)$) {};
		\node[gvertex,indicatededge=south,rotate=-90,rotate around=35:(b)] (c) at ($(b)+(0.6,0)$) {};
		\node[gvertex,label={[labelsty]180:$w$}] at (c) {};
		\node[gvertex,indicatededge=south,rotate=-90,label={[labelsty]above:$u$}] (d) at ($(a)+(3,0)$) {};
		\node[gvertex,indicatededge=south,rotate=-90,rotate around=-40:(d),label={[labelsty,label distance=-2pt]90:$v$}] (e) at ($(d)+(0.8,0)$) {};
		\node[gvertex,indicatededge=south,rotate=-90,rotate around=35:(e)] (f) at ($(e)+(0.6,0)$) {};
		\node[gvertex,label={[labelsty]180:$w$}] at (f) {};
		\node[ngvertex,label={[labelsty]above:$w'$}] (g) at ($(d)+(f)-(e)$) {};
		
		\draw[edge] (a)edge(b) (b)edge(c) (d)edge(e) (e)edge(f);
		\draw[nedge] (g)edge(d) (g)edge(f);
		\draw[ultra thick,->] (1.75,-0.2) -- (2.25,-0.2);
		\node[labelsty] at (2,-0.9) {\closePar};
	\end{tikzpicture}
	\caption{Two recursive $\Grec$ constructions.}
	\label{fig:construct}
\end{figure}
\Cref{fig:splitribbon} gives an example of a graph in $\Grec$
that is not the underlying graph of a \parallelogramTilingFramework. 
It is easy to use the construction to show that the class has the ribbon-cutting property.
\begin{proposition}
	\label{prop:GrecIsRibbonCutting}
	Every graph in $\Grec$ is ribbon-cutting.
\end{proposition}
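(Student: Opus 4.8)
The plan is to proceed by induction on the recursive construction of $\Grec$. The base case is the 4-cycle $C_4$: it has two ribbons, each consisting of one pair of opposite edges, and removing either pair disconnects the graph into two single edges, so $C_4$ is ribbon-cutting. For the inductive step I would assume $G \in \Grec$ is ribbon-cutting and analyze how each of the two construction operations affects the ribbon structure, checking that every ribbon of the new graph $G'$ is still an edge cut.

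For \addPar, the new vertices $w_1, w_2$ and edges $uw_1, w_1w_2, w_2v$ together with the old edge $uv$ form a new 4-cycle $(u, w_1, w_2, v)$. This creates (or extends) exactly one ribbon: the edge $w_1w_2$ becomes opposite to $uv$, so $w_1w_2$ joins the ribbon $r$ of $G$ containing $uv$; the two ``rung'' edges $uw_1$ and $w_2v$ are each a new singleton ribbon (no 4-cycle of $G'$ has either of them as an opposite edge, since $w_1$ and $w_2$ have degree $2$ and lie on only the one new 4-cycle). The two new singleton ribbons $\{uw_1\}$ and $\{w_2v\}$ are clearly edge cuts, since deleting $uw_1$ (resp.\ $w_2v$) leaves $w_1$ reachable only through $w_2$ (resp.\ $w_2$ only through $w_1$), hence disconnects $w_1$ from the rest once we also note $w_1w_2$ alone cannot reconnect it — more carefully, after deleting $uw_1$ the vertex $w_1$ has its only remaining neighbor $w_2$, whose only other neighbor is $v$, so $\{uw_1\}$ separates... actually the cleanest phrasing: $uw_1$ is a bridge of $G'$ because $w_1w_2$ and $w_2v$ form a path whose removal-free alternative route from $w_1$ back to $u$ would have to use $uw_1$ itself — so both rung edges are bridges, hence edge cuts. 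For the extended ribbon $r' = r \cup \{w_1w_2\}$: deleting $r'$ from $G'$ is the same as deleting $r$ from $G$ and then deleting the (now isolated-ish) path $u - w_1 - w_2 - v$'s middle edge; since $G \setminus r$ is disconnected by hypothesis and $w_1, w_2$ attach only at $u$ and $v$ via the rung edges, $G' \setminus r'$ is disconnected as well (the component structure of $G \setminus r$ is preserved, with $w_1$ joined to $u$'s side and $w_2$ joined to $v$'s side). Every ribbon of $G$ not equal to $r$ is unaffected and remains an edge cut in $G'$ since adding a pendant path cannot merge components after deleting an old cut.

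For \closePar, we add one vertex $w'$ and edges $uw', w'w$, closing the path $u - v - w$ into the 4-cycle $(u, v, w, w')$. Here $uw'$ becomes opposite to $vw$ and $w'w$ becomes opposite to $uv$, so the ribbon of $uv$ and the ribbon of $vw$ each gain one edge; no new singleton ribbon is created. The separation hypothesis in the definition of \closePar{} — that $v$ is separated from every vertex of $V_G \setminus \{u,v,w\}$ by a ribbon $s$ containing neither $uv$ nor $vw$ — is exactly what I would use to show the two enlarged ribbons stay edge cuts. The key point: let $r_1$ be the ribbon of $uv$ (now also containing $w'w$) and $r_2$ the ribbon of $vw$ (now also containing $uw'$); one checks $r_1 \ne r_2$, because if $uv$ and $vw$ were in the same ribbon already in $G$, that ribbon would not be an edge cut (it would contain two edges of a path through $v$ with $v$ not separable, contradicting ribbon-cutting of $G$ together with the separation hypothesis). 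Then deleting $r_1$ from $G'$: in $G$, deleting $r_1$ disconnects the graph; the vertex $w'$ has neighbors $u$ and $w$, and after removing $r_1 \supseteq \{uv, w'w\}$ only the edge $uw'$ survives at $w'$, so $w'$ lands in $u$'s component; we must ensure $u$'s and $w$'s components of $G \setminus r_1$ are still separate in $G' \setminus r_1$, i.e.\ that $uw'$ does not bridge them — but $w'$ has no other edges, so it cannot. Symmetrically for $r_2$. All other ribbons of $G$ are untouched and remain edge cuts in $G'$.

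The main obstacle I anticipate is the bookkeeping in the \closePar{} case: specifically, ruling out that the operation accidentally merges the ribbon of $uv$ with the ribbon of $vw$ (which would create a non-cutting ribbon passing through $v$), and confirming that the old edge cuts survive the addition of the two new edges at $w'$. Both of these reduce to the observation that $w'$ is a degree-$2$ vertex lying on a single new 4-cycle, so it cannot create long-range connectivity; and the separation hypothesis in \Cref{def:Grec} is precisely the condition preventing the two ribbons from having coincided earlier. I would state a small helper observation — adding a path of new degree-$2$ vertices between two existing vertices never destroys an existing edge cut, and creates only the predictable ribbons — and invoke it in both cases to keep the argument short.
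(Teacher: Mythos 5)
Your overall strategy (structural induction over the two operations, checking that every ribbon of the extended graph is still an edge cut) is exactly the paper's, but your \addPar{} case contains a genuine error in the identification of the ribbons. In the new 4-cycle $(u,w_1,w_2,v)$ the two ``rung'' edges $uw_1$ and $w_2v$ are \emph{opposite} edges, so by definition they lie in the \emph{same} ribbon: the correct statement is that $\{uw_1,w_2v\}$ is a single new two-element ribbon, not two singleton ribbons. Consequently your justification fails: $uw_1$ is not a bridge of $G'$, since after deleting it the vertex $w_1$ still reaches $u$ along $w_1$--$w_2$--$v$--$u$ (the edges $w_1w_2$, $w_2v$ and $uv$ all survive), and the singleton $\{uw_1\}$ is not an edge cut at all. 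The correct argument is that deleting the whole ribbon $\{uw_1,w_2v\}$ isolates $\{w_1,w_2\}$ from the rest of the graph, which is what makes this ribbon an edge cut. With that correction the remainder of your \addPar{} analysis ($w_1w_2$ joining the ribbon of $uv$, old cuts surviving the addition of a pendant path) is sound.

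In the \closePar{} case your conclusion is right, but two intermediate claims are off. First, the separation hypothesis is \emph{not} needed for the ribbon-cutting property (the paper's proof says so explicitly); it is only required to guarantee that a parallelogram placement extends, cf.\ \Cref{lem:placementExtension}. Second, your claim that if $uv$ and $vw$ lay in the same ribbon of $G$ then that ribbon could not be an edge cut is unjustified: by \Cref{lem:twoComponents} such a ribbon would simply place $u$ and $w$ in one component and $v$ in the other, which is perfectly consistent with being an edge cut. The cleaner route, which the paper takes in one line, is to note that every ribbon of $G'$ restricted to $E_G$ contains a full ribbon of $G$, that deleting a superset of an edge cut still disconnects, and that the new degree-2 vertices cannot restore connectivity (any path through them can be rerouted through the old 4-cycle edges unless those were deleted too); this works whether or not ribbons merge.
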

\begin{proof}
	By structural induction:
	the 4-cycle graph is ribbon-cutting.
	\addPar{} preserves the property
	since $\{uw_1, w_2v\}$ is a new ribbon and $w_1w_2$ belongs to the ribbon of~$uv$.
	\closePar{} does so as well:
	the edges $uw'$ and $w'w$ belong to the ribbons of
	$vw$ and $uv$ respectively.
	If any ribbon of the extended graph were not an edge cut,
	than it would not be an edge cut in the original graph.
	Notice that the separation assumption is not needed for this.
\end{proof}

Recall that for a \Pframework{} $(G,\rho)$, any ribbon $r$ is simple
by \Cref{rem:parallelogramPlacementImpliesSimpleRibbons} and 
$G\setminus r$ has two connected components by \Cref{lem:twoComponents}.
This allows us to translate the vertices of one of the components by a constant vector.
\begin{remark}
	\label{rem:translateComponent}
	Let $(G,\rho)$ be a \Pframework{} and $r$ be a ribbon of $G$.
	Let $V_1$ and $V_2$ be the vertex sets of the two connected components of $G\setminus r$.
	For every vector $t\in\RR^2 \setminus \{\rho(u_1)-\rho(u_2)\colon u_1\in V_1, u_2\in V_2\}$,
	the placement $\rho'$ of~$G$ given by $\rho'(v)=\rho(v) + t$ if $v \in V_2$
	and $\rho'(v)=\rho(v)$ otherwise
	is a parallelogram placement.
\end{remark}

We are going to show the relation between \Pframework{}s, \parallelogramTilingFramework{}s and the graphs in $\Grec$.
Namely, the underlying graphs of \parallelogramTilingFramework{}s are in $\Grec$, 
which is in turn a subset of the underlying graphs of \Pframework{}s. 
For this we need an equivalent condition to the separation assumption in \closePar{}.
\begin{lemma}
	\label{lem:placementExtension}
	For a \Pframework{} $(G,\rho)$ and $uv, vw\in E_G$,
	the following are equivalent:
	\begin{enumerate}
	  \item The vertex $v$ is separated from any vertex in $V_G\setminus\{u,v,w\}$
	  	by a ribbon which does not contain $uv$ or $vw$.
	  \item There exists a parallelogram placement $\rho'$ of the graph 
	  	$G'=(V_G\cup\{w'\}, E_G\cup\{uw', w'w\})$, where $w'\notin V_G$.
	\end{enumerate}
\end{lemma}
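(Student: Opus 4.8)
The plan is to prove the two implications separately, with the geometric content concentrated in showing $(1)\Rightarrow(2)$ and the combinatorial content in $(2)\Rightarrow(1)$.

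For $(1)\Rightarrow(2)$, I would construct $\rho'$ explicitly. Set $\rho'(x)=\rho(x)$ for all $x\in V_G$, and put $\rho'(w')=\rho(u)+\rho(w)-\rho(v)$, so that $(u,v,w,w')$ forms a parallelogram by construction. The only thing to check is that $\rho'$ is injective, i.e.\ that $\rho(u)+\rho(w)-\rho(v)\neq\rho(x)$ for every $x\in V_G$. Suppose to the contrary that $\rho(x)=\rho(u)+\rho(w)-\rho(v)$ for some $x\in V_G$; note $x\notin\{u,v,w\}$ since $\rho$ is injective and the points are distinct (using that there is no $3$-cycle, so $u,v,w$ are genuinely distinct, which follows from \Cref{thm:ribbonCutGraphProperties}). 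By hypothesis there is a ribbon $r$, not containing $uv$ or $vw$, that separates $v$ from $x$. Pick any walk $W$ in $G$ from $v$ to $x$; since $r$ separates them, $|r\cap W|$ is odd by \Cref{lem:twoComponents}. Now compare this with a walk $W'$ obtained by going $v\to u\to w'\to w$ and then from $w$ to $x$ along any walk; I would instead argue directly on vectors: consider the closed walk in $G$ formed by $W$ together with the edges $uv$ and $vw$ (traversed appropriately), giving a walk from $u$ to $w$ avoiding both $uv,vw$, and whose intersection with $r$ has the same parity as $|r\cap W|$ since $uv,vw\notin r$. Apply \Cref{lem:sumOfVectorsInRibbon} to the even-contributing ribbons and use that the total displacement along this walk equals $\rho(w)-\rho(u)$. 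On the other hand, the parallelogram relation forces $\rho(x)-\rho(v)=\rho(w)-\rho(u)$, and one shows this is incompatible with $r$ contributing an odd number of (parallel, equal-length, consistently oriented by \Cref{lem:ribbonTranslation}) nonzero vectors to a walk whose net displacement is $\rho(x)-\rho(v)=\rho(w)-\rho(u)$. The precise bookkeeping here — lining up orientations of the $r$-edges along the chosen walk using \Cref{lem:ribbonTranslation}, and extracting a contradiction from the parity — is the main obstacle.

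For $(2)\Rightarrow(1)$, assume $\rho'$ is a parallelogram placement of $G'$. The new edges $uw'$ and $w'w$ lie in well-defined ribbons $r_1,r_2$ of $G'$; since $(u,v,w,w')$ is a $4$-cycle, $r_1$ is the ribbon of $vw$ and $r_2$ is the ribbon of $uv$. I would take $r$ to be the ribbon of $G$ corresponding to $r_2$ (the ribbon of $uv$) — or, more carefully, I must identify a ribbon of $G$ that separates $v$ from every other vertex. Restricting $\rho'$ to $V_G$ gives back $\rho$, and the key observation is that in $G'$ the vertex $w'$ together with the two edges $uw',w'w$ witnesses that $v$ is ``trapped'': any walk in $G$ from $v$ to some $x\in V_G\setminus\{u,v,w\}$ must cross the ribbon $r_2$ of $uv$ (equivalently $r_1$ of $vw$) an odd number of times, because otherwise, closing up the walk through $w'$, one would obtain in $G'$ a closed walk whose net $\rho'$-displacement is forced to be zero by \Cref{lem:sumOfVectorsInRibbon} yet is nonzero by injectivity of $\rho'$ (as $\rho'(x)\neq\rho'(v)$). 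I expect the cleanest route is: if no ribbon of $G$ separated $v$ from some such $x$, build a walk $W$ from $v$ to $x$ in $G$ meeting every ribbon an even number of times; extend $W$ in $G'$ by replacing $vw$-or-$uv$-crossings appropriately via $w'$; and derive that $\rho'(x)=\rho'(v)$, contradicting injectivity. One also has to check the ribbon used does not contain $uv$ or $vw$, which holds because those edges are incident to $v$ and lie in the two ribbons $r_1,r_2$ that (by the separation just established) do not connect $v$ to the rest — so a separating ribbon must be a third one. Throughout I would lean on \Cref{rem:translateComponent} and \Cref{lem:ribbonTranslation} to manipulate placements, and on \Cref{thm:ribbonCutGraphProperties} to rule out degeneracies.
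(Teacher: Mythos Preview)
Your plan for $(1)\Rightarrow(2)$ has a genuine gap. You fix $\rho'|_{V_G}=\rho$ and try to derive a contradiction from a collision $\rho(x)=\rho(u)+\rho(w)-\rho(v)$ using the separating ribbon $r$. But this does not yield a contradiction: ribbon vectors in a parallelogram placement may be linearly dependent, so the odd contribution $\pm t_r$ of $r$ to $\rho(x)-\rho(v)$ along a walk can be absorbed by odd contributions of other ribbons, and one can still have $\rho(x)-\rho(v)=(\rho(u)-\rho(v))+(\rho(w)-\rho(v))$. (Incidentally, your claimed identity $\rho(x)-\rho(v)=\rho(w)-\rho(u)$ is wrong; the parallelogram relation gives $\rho(x)-\rho(v)=(\rho(u)-\rho(v))+(\rho(w)-\rho(v))$.) You also do not handle the degenerate case where $\rho(u),\rho(v),\rho(w)$ are collinear with $v$ the midpoint, which forces $\rho'(w')=\rho(v)$. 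The paper does not attempt to show that extending the given $\rho$ works. Instead, if a collision $\rho(w')=\rho(u')$ occurs, hypothesis~(1) provides a ribbon $r$ with $uv,vw\notin r$ separating $v$ from~$u'$; then $u,v,w$ lie in one component of $G\setminus r$ and $u'$ in the other, and one applies \Cref{rem:translateComponent} to translate the component containing $u'$ so as to obtain a \emph{different} parallelogram placement of $G$ avoiding the collision (while the forced position of $w'$ is unchanged). This translation step is the missing idea.

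For the converse your sketch is muddled: you appear to seek a single ribbon separating $v$ from every other vertex, whereas (1) only asks for one ribbon per vertex; and the branch ``if no ribbon of $G$ separated $v$ from some such $x$'' is vacuous by \Cref{thm:ribbonCutGraphProperties}. The paper proves the contrapositive directly: if some $u'$ is separated from $v$ only by the ribbons of $uv$ or $vw$, then for \emph{every} parallelogram placement $\sigma$ of $G$ the ribbon decomposition of $\sigma(u')-\sigma(v)$ along any walk (using \Cref{lem:sumOfVectorsInRibbon} and \Cref{lem:ribbonTranslation}) forces $\sigma(u')=\sigma(u)+\sigma(w)-\sigma(v)$, the other possibilities being excluded by injectivity. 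Applied to $\rho'|_{V_G}$ this gives $\rho'(u')=\rho'(w')$, contradicting injectivity of~$\rho'$.
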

\begin{proof}
	$1. \implies 2.$
	If we want to extend $\rho$ to a parallelogram placement of~$G'$,
	the position $\rho(w')$ of the new vertex $w'$ is uniquely determined 
	by the requirement that $(\rho(u),\rho(v),\rho(w),\rho(w'))$ is a parallelogram.
	We can assume that $\rho(u),\rho(v),\rho(w)$ are not collinear,
	hence, $\rho(v)\neq\rho(w')$. If it is not so, we replace $\rho$
	by a parallelogram placement obtained by \Cref{rem:translateComponent} for the ribbon of $uv$
	and a non-zero translation.
	  
	If $\rho:V_{G'}\rightarrow \RR^2$ is injective, we are done. 
	Otherwise, $\rho(w')=\rho(u')$ for a unique vertex $u'\in V_G\setminus\{u,v,w\}$.
	By assumption, there is a ribbon $r$ separating $v$ from $u$ such that $uv,vw\notin r$.
	Thus, $u,v,w$ are in the same connected component of $G\setminus r$,
	whereas $u'$ is in the other one.
	Using \Cref{rem:translateComponent}, there is a parallelogram placement $\rho'$ of $G$
	such that $\rho(w')\neq\rho'(u')$. Moreover,
	the translation vector can be chosen so that the whole image $\rho'(V_G)$ avoids $\rho(w')$.
	Therefore, $\rho'$ uniquely extends to a parallelogram placement of $G'$ by setting $\rho'(w')=\rho(w')$.
	
	$\neg 1. \implies \neg 2.$
	Assume that $u'\in V_G\setminus\{u,v,w\}$ is a vertex such that it is separated from~$v$
	only by the ribbon of $uv$ or $vw$.
	Let $W=(v=u_0, u_1, \dots, u_m=u')$ be a walk from $v$ to $u'$.
	Let $R$ be the set of ribbons which contains at least one edge of $W$.
	All ribbons are simple by \Cref{rem:parallelogramPlacementImpliesSimpleRibbons}.
	By the assumption and \Cref{lem:twoComponents}, $|r\cap W|$ is even for every ribbon $r$ avoiding $uv$ and $vw$.
	For any parallelogram placement $\rho$ of $G$, we have 
	\begin{align*}
		\rho(u') - \rho(v) &\eqwithreference{\phantom{3.7}} \sum_{i=1}^m \left(\rho(u_i)-\rho(u_{i-1})\right) 
			= \sum_{r\in R} \,\, \sum_{\substack{\oriented{w_1}{w_2}\in r \cap W }} (\rho(w_2)-\rho(w_1)) \\
			&\eqwithreference{\ref{lem:sumOfVectorsInRibbon}} \sum_{\substack{r\in R \\ uv \in r \lor vw \in r }} \,\,
				 \sum_{\substack{\oriented{w_1}{w_2}\in r \cap W }} (\rho(w_2)-\rho(w_1)) \\
			&\eqwithreference{\ref{lem:ribbonTranslation}} \alpha(\rho(w)-\rho(v)) + \beta (\rho(u)-\rho(v))\,,
	\end{align*}
	where $\alpha,\beta\in\{0,1\}$. 
	Actually, $\alpha=\beta=1$, otherwise $\rho(u')=\rho(w)$ or $\rho(u')=\rho(u)$, which violates injectivity.
	Hence, $\rho(u') = \rho(w) + \rho(u)-\rho(v)$.
	Assume for contradiction that there is a parallelogram placement $\rho'$ of $G'$.
	Since $\rho'|_{V_G}$ is a parallelogram placement of~$G$,
	we have by the previous $\rho'(u') = \rho'(w) + \rho'(u)-\rho'(v)$.
	But this is a contradiction since $\rho'(w') = \rho'(w) + \rho'(u)-\rho'(v)$ as well and $w'\neq u'$.
\end{proof}

\begin{corollary}
	There exists a \Pframework{} $(G,\rho)$ for every $G\in\Grec$.
\end{corollary}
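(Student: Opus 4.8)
The plan is to argue by structural induction on the recursive definition of $\Grec$, invoking \Cref{prop:GrecIsRibbonCutting} so that the ribbon-cutting half of the \Pframework{} definition is automatic and only the existence of a parallelogram placement has to be produced at each step. For the base case, the $4$-cycle graph, one maps the four vertices to the corners of a non-degenerate parallelogram (say a unit square); this map is injective and its single $4$-cycle is a parallelogram, so it is a parallelogram placement, and the graph is ribbon-cutting.

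For the inductive step I would handle \closePar{} first, since there is essentially nothing to do: the hypothesis imposed by the \closePar{} rule is verbatim statement~1 of \Cref{lem:placementExtension} applied to the \Pframework{} $(G,\rho)$ supplied by the induction hypothesis, and statement~2 of that lemma then yields a parallelogram placement $\rho'$ of the enlarged graph $G'=(V_G\cup\{w'\},E_G\cup\{uw',w'w\})$. Together with \Cref{prop:GrecIsRibbonCutting}, $(G',\rho')$ is a \Pframework{}.

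For \addPar{}, given a \Pframework{} $(G,\rho)$ and an edge $uv\in E_G$, I would extend $\rho$ by choosing a vector $t\in\RR^2$ that is not a scalar multiple of $\rho(v)-\rho(u)$ and satisfies $\rho(u)+t\notin\rho(V_G)$ and $\rho(v)+t\notin\rho(V_G)$ --- such $t$ exists because these conditions exclude only one line through the origin and finitely many further points --- and then set $\rho(w_1)=\rho(u)+t$ and $\rho(w_2)=\rho(v)+t$. By construction $(u,w_1,w_2,v)$ is a non-degenerate parallelogram, $\rho(w_1)\neq\rho(w_2)$ since $\rho(u)\neq\rho(v)$, and the two new points avoid $\rho(V_G)$, so the extended placement is injective. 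It then remains to check that no other $4$-cycle has appeared: any $4$-cycle through $w_1$ must use the only two edges incident to $w_1$, namely $uw_1$ and $w_1w_2$, hence has the form $(u,w_1,w_2,x)$ with $x$ a common neighbour of $w_2$ and $u$; since the neighbours of $w_2$ are only $w_1$ and $v$, we get $x=v$, so $(u,w_1,w_2,v)$ is the unique new $4$-cycle, and the symmetric argument rules out $4$-cycles meeting $\{w_1,w_2\}$ only in $w_2$. Old $4$-cycles are unaffected since $\rho$ is unchanged on $V_G$. Hence the extension is a parallelogram placement, and ribbon-cutting again follows from \Cref{prop:GrecIsRibbonCutting}.

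The genuine difficulty has already been absorbed into \Cref{lem:placementExtension}: its direction $1\implies 2$ is where the delicate coincidence-avoiding translation of a connected component (\Cref{rem:translateComponent}) is carried out, precisely to license the \closePar{} step. What is left for this corollary --- the combinatorial bookkeeping that \addPar{} creates exactly one new $4$-cycle and the finite-exclusion choice of the translation vector $t$ --- is routine, so I do not expect any real obstacle here.
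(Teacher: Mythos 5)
Your proof is correct and follows essentially the same route as the paper: structural induction with the base case a parallelogram, \closePar{} discharged by \Cref{lem:placementExtension}, and \addPar{} handled by translating the edge $uv$ by a generic vector. The only difference is that you spell out the injectivity and new-$4$-cycle bookkeeping for \addPar{} that the paper leaves implicit, and those details check out.
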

\begin{proof}
	We proceed by structural induction.
	The 4-cycle can be placed as a parallelogram.
	For a graph $G'$ constructed using \addPar{} from $G$,
	a parallelogram placement of $G$ can be extended to a parallelogram placement of $G'$
	by placing the two new vertices to form a parallelogram so that the placement is injective. 
	If $G'$ is constructed from $G$ by \closePar{},
	then there exists a parallelogram placement of $G'$ by \Cref{lem:placementExtension}.
\end{proof}

\begin{corollary}
	If $(G,\rho)$ is a \parallelogramTilingFramework, then $G\in \Grec$.
	In particular, $(G,\rho)$ is a \Pframework.
\end{corollary}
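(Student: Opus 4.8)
The plan is to prove the statement by induction on the number $n$ of parallelograms in the set $S$ defining the \parallelogramTilingFramework{} $(G,\rho)$; once $G\in\Grec$ is established, the ``in particular'' is immediate, since $G$ is then ribbon-cutting by \Cref{prop:GrecIsRibbonCutting} and $\rho$ is a parallelogram placement by hypothesis, so $(G,\rho)$ is a \Pframework{}. For $n=1$ the graph $G$ is the $4$-cycle, which lies in $\Grec$ by definition.

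The heart of the inductive step is a \emph{peeling lemma}: if $n\ge 2$, then there is a parallelogram $P\in S$ such that $S':=S\setminus\{P\}$ again satisfies the three conditions of \Cref{def:carpetFramework} (so it is a \parallelogramTilingFramework{}), and such that $P$ meets $\bigcup S'$ either in exactly one edge, with its other two vertices not lying in $\bigcup S'$, or in exactly two adjacent edges, with its fourth vertex not lying in $\bigcup S'$. I would deduce this from the hypothesis that $\partial(\bigcup S)$ is a simple polygon: this forces the existence of a parallelogram having three edges, or two consecutive edges, on $\partial(\bigcup S)$, and at any vertex $x$ of a parallelogram two of whose edges at $x$ lie on $\partial(\bigcup S)$, the fact that the boundary polygon passes through $x$ only once forces $x$ to belong to no other parallelogram (otherwise four boundary edges would emanate from $x$). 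The same local analysis shows that deleting such a $P$ reroutes the boundary across the formerly interior edge(s) of $P$ without creating self-intersections or holes, so $S'$ is again a carpet. This lemma --- equivalently, the existence of a ``shelling order'' $P_1,\dots,P_n$ in which every prefix is a carpet and every $P_i$ attaches along one or two adjacent edges --- is the step I expect to be the main obstacle, as it is precisely where simple-connectedness of $\bigcup S$ (absence of holes) enters.

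Granting the peeling lemma, let $G'$ be the $1$-skeleton graph of $S'$. By the induction hypothesis $G'\in\Grec$, hence $G'$ is ribbon-cutting by \Cref{prop:GrecIsRibbonCutting}, and $\rho|_{V_{G'}}$ is a parallelogram placement of $G'$ because every $4$-cycle of $G'$ is a $4$-cycle of $G$; thus $(G',\rho|_{V_{G'}})$ is a \Pframework{}. If $P$ meets $\bigcup S'$ in a single edge, write $P=abcd$ in cyclic order with $ab$ the shared edge; the peeling lemma gives $c,d\notin V_{G'}$, and since $b,c$ resp.\ $c,d$ resp.\ $d,a$ are not both in $V_{G'}$ we get $bc,cd,da\notin E_{G'}$, so $G$ is obtained from $G'$ by the \addPar{} construction on $ab$ (with $w_1=d$, $w_2=c$); hence $G\in\Grec$. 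If instead $P$ meets $\bigcup S'$ in two adjacent edges $ua,aw$ of $P$, write $P=uawz$ in cyclic order; the peeling lemma gives $z\notin V_{G'}$, so $\rho$ extends $\rho|_{V_{G'}}$ to a parallelogram placement of $(V_{G'}\cup\{z\},\, E_{G'}\cup\{uz,zw\})=G$, which shows that condition~$2$ of \Cref{lem:placementExtension} holds for the \Pframework{} $(G',\rho|_{V_{G'}})$ and the edges $ua,aw$. By the direction $2\implies 1$ of that lemma, $a$ is separated from every vertex of $V_{G'}\setminus\{u,a,w\}$ by a ribbon avoiding $ua$ and $aw$ --- exactly the hypothesis of the \closePar{} construction --- so $G$ is obtained from $G'$ by \closePar{} and thus $G\in\Grec$. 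This completes the induction, and as noted above $(G,\rho)$ is then a \Pframework{}.
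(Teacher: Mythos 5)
Your overall strategy coincides with the paper's: induct on $|S|$, peel off a suitable boundary parallelogram, realize the peeled-off piece as an \addPar{} or \closePar{} step, and derive the separation hypothesis of \closePar{} from the implication $2\implies 1$ of \Cref{lem:placementExtension} applied to the restricted placement. The second half of your argument (the two attachment cases and the use of \Cref{lem:placementExtension}) is correct and is exactly what the paper does.

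The genuine gap is the one you flag yourself: the existence part of the peeling lemma. Two things are missing. First, you assert that simplicity of the boundary ``forces the existence of a parallelogram having three edges, or two consecutive edges, on $\partial(\bigcup S)$'', but no proof is given, and a purely local argument at a convex boundary vertex does not yield it: at a vertex with interior angle $170^\circ$, two parallelograms of angle $85^\circ$ may meet, and neither has two consecutive boundary edges there. Second, and more importantly, the asserted statement is too weak. A parallelogram with two consecutive edges on the boundary whose \emph{opposite} vertex also lies on the boundary (the class $K'_2$ in the paper) cannot be peeled: after its removal the rerouted boundary passes through that opposite vertex twice and is no longer a simple polygon. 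Your local analysis controls the apex vertex where the two boundary edges meet (showing it lies in no other parallelogram), but says nothing about the opposite vertex, so it neither excludes the $K'_2$ configuration nor justifies the claim that $S'$ is again a carpet. What is actually needed is a parallelogram in $K_2\cup K_3$, i.e.\ with two consecutive boundary edges \emph{and} opposite vertex off the boundary, or with three boundary edges. The paper obtains this by a global count: writing $k=|K_1|+2|K_2|+2|K'_2|+2|K''_2|+3|K_3|$ for the number of boundary edges and lower-bounding each class's contribution to the interior angle sum $(k-2)\pi$ gives $|K_2|+|K_3|\ge 2$. Some such global counting (or an equivalent discharging/shelling argument) is unavoidable here; without it the induction does not get off the ground.
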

\begin{proof}
	By the definition of \parallelogramTilingFramework, $\rho$ is a parallelogram placement. 
	Once we show that $G\in \Grec$, the fact that $(G,\rho)$ is a \Pframework{} follows from \Cref{prop:GrecIsRibbonCutting}.
	
	We proceed by induction on the number of parallelograms yielding a \parallelogramTilingFramework{}.
	Let $S$ be the set of parallelograms in $\RR^2$ giving a \parallelogramTilingFramework{} $(G',\rho')$
	according to \Cref{def:carpetFramework}.
	If $|S|=1$, then $(G',\rho')$ is the 4-cycle with a parallelogram placement, hence, $G'\in\Grec$.
	Suppose that $|S|\geq 2$. 
	The boundary of $\bigcup S$ is a simple polygon $M$ with $k$ edges.
	We divide the parallelograms having an edge in the polygon $M$ into the following categories (see \Cref{fig:parallelogramCategories}):
	\begin{itemize}
	  \item $K_1$ --- parallelograms with one edge in $M$,
	  \item $K_2$ --- parallelograms with two incident edges in $M$ such that the vertex that is not in these two edges is not in $M$,
	  \item $K'_2$ --- parallelograms with two incident edges in $M$ that are not in $K_2$,
	  \item $K''_2$ --- parallelograms with two opposite edges in $M$,
	  \item $K_3$ --- parallelograms with three edges in $M$. 
	\end{itemize}
	
	\begin{figure}[ht]
		\centering
		\begin{tikzpicture}
			\node[midfvertex] (a) at (0,0) {};
			\node[midfvertex] (b) at (1.5,0) {};
			\node[midfvertex] (c) at (1.5,1) {};
			\node[midfvertex] (d) at ($(a)+(c)-(b)$) {};
			\node[midfvertex,rotate around=45:(b)] (e) at ($(b)+(0.9,0)$) {};
			\node[midfvertex] (f) at ($(e)+(c)-(b)$) {};
			\node[midfvertex,rotate around=-20:(e)] (g) at ($(e)+(1.2,0)$) {};
			\node[midfvertex] (h) at ($(g)+(f)-(e)$) {};
			\node[midfvertex] (k) at ($(d)+(f)-(c)$) {};
			\node[midfvertex,rotate around=105:(k)] (l) at ($(k)+(1.2,0)$) {};
			\node[midfvertex] (m) at ($(f)+(l)-(k)$) {};
			\node[midfvertex,rotate around=25:(m)] (n) at ($(m)+(0.9,0)$) {};
			\node[midfvertex] (o) at ($(f)+(n)-(m)$) {};
			\node[midfvertex] (p) at ($(b)+(g)-(e)$) {};
			\node[midfvertex,rotate around=-5:(h)] (x) at ($(h)+(1.1,0)$) {};
			\node[midfvertex] (y) at ($(g)+(x)-(h)$) {};
			\node[midfvertex,rotate around=35:(x)] (q) at ($(x)+(1.1,0)$) {};
			\node[midfvertex] (r) at ($(y)+(q)-(x)$) {};
			\node[midfvertex,rotate around=20:(q)] (s) at ($(q)+(0.9,0)$) {};
			\node[midfvertex] (t) at ($(r)+(s)-(q)$) {};
			\node[midfvertex,rotate around=-35:(t)] (u) at ($(t)+(1.3,0)$) {};
			\node[midfvertex] (v) at ($(r)+(u)-(t)$) {};
			\node[midfvertex] (w) at ($(s)+(u)-(t)$) {};
			\node[midfvertex] (z) at ($(y)+(v)-(r)$) {};
			 	
			\draw pic [interiorangle, draw=col1] {angle = c--b--a};
			\draw pic [interiorangle, draw=col1] {angle = b--a--d};
			\draw pic [interiorangle, draw=col1] {angle = a--d--c};
			\draw pic [interiorangle, draw=col1] {angle = p--b--e};
			\draw pic [interiorangle, draw=col1] {angle = g--p--b};
			\draw pic [interiorangle, draw=col1] {angle = e--g--p};
			\draw pic [interiorangle, draw=col1] {angle = w--u--t};
			\draw pic [interiorangle, draw=col1] {angle = s--w--u};
			\draw pic [interiorangle, draw=col1] {angle = t--s--w};
			\draw pic [interiorangle, draw=col1] {angle = z--y--r};
			\draw pic [interiorangle, draw=col1] {angle = v--z--y};
			\draw pic [interiorangle, draw=col1] {angle = r--v--z};
			
			\draw pic [interiorangle, draw=col2] {angle = m--f--k};
			\draw pic [interiorangle, draw=col2] {angle = l--m--f};
			\draw pic [interiorangle, draw=col2] {angle = k--l--m};
			\draw pic [interiorangle, draw=col2] {angle = f--k--l};
			
			\draw pic [interiorangle, draw=col3] {angle = g--h--x};
			\draw pic [interiorangle, draw=col3] {angle = y--g--h};
			\draw pic [interiorangle, draw=col3] {angle = x--y--g};
			\draw pic [interiorangle, draw=col3] {angle = h--x--y};
			
			\draw pic [interiorangle, draw=col4] {angle = c--d--k};
			\draw pic [interiorangle, draw=col4] {angle = d--k--f};
			\draw pic [interiorangle, draw=col4] {angle = e--f--h};
			\draw pic [interiorangle, draw=col4] {angle = f--h--g};
			\draw pic [interiorangle, draw=col4] {angle = r--q--s};
			\draw pic [interiorangle, draw=col4] {angle = q--s--t};
			\draw pic [interiorangle, draw=col4] {angle = u--v--r};
			\draw pic [interiorangle, draw=col4] {angle = t--u--v};
			\draw pic [interiorangle, draw=col4] {angle = y--x--q};
			\draw pic [interiorangle, draw=col4] {angle = x--q--r};
			
			\draw pic [interiorangle, draw=col5] {angle = f--m--n};
			\draw pic [interiorangle, draw=col5] {angle = o--f--m};
			\draw pic [interiorangle, draw=col5] {angle = n--o--f};
			\draw pic [interiorangle, draw=col5] {angle = m--n--o};
			
			\node[category, ecol] at ($(e)!0.5!(c)$) {$K_0$};
			
			\node[category, col1] at ($(a)!0.5!(c)$) {$K_2$};
			\node[category, col1] at ($(e)!0.5!(p)$) {$K_2$};
			\node[category, col1] at ($(w)!0.55!(t)$) {$K_2$};
			\node[category, col1] at ($(y)!0.5!(v)$) {$K_2$};
			
			\node[category, col2] at ($(m)!0.5!(k)$) {$K'_2$};
			
			\node[category, col3] at ($(g)!0.5!(x)$) {$K''_2$};
			
			\node[category, col4] at ($(c)!0.45!(k)$) {$K_1$};
			\node[category, col4] at ($(e)!0.5!(h)$) {$K_1$};
			\node[category, col4] at ($(q)!0.55!(t)$) {$K_1$};
			\node[category, col4] at ($(x)!0.6!(r)$) {$K_1$};
			\node[category, col4] at ($(t)!0.5!(v)$) {$K_1$};
			
			\node[category, col5] at ($(m)!0.5!(o)$) {$K_3$};
			
			\draw[edge] (a)--(b) (b)--(c) (c)--(d) (d)--(a) (b)--(e) (e)--(f) (f)--(c)
			 	(e)--(g) (g)--(h) (h)--(f) (d)--(k) (f)--(k) (k)--(l) (l)--(m) (m)--(f) 
			 	(m)--(n) (n)--(o) (o)--(f) (b)--(p) (p)--(g) (h)--(x) (q)--(r) (y)--(g)
			 	(q)--(s) (s)--(t) (t)--(r) (r)--(v) (v)--(u) (u)--(t) (u)--(w) (w)--(s)
			 	(x)--(y) (x)--(q) (r)--(y) (v)--(z) (z)--(y);
		\end{tikzpicture}
		\caption{An example of dividing parallelograms into categories according to their intersection with the boundary.
			The angles whose contribution to the sum of the interior angles is considered are indicated.
			Note that the parallelogram labeled $K_0$ belongs to S but is not part of the boundary.}
		\label{fig:parallelogramCategories}
	\end{figure}
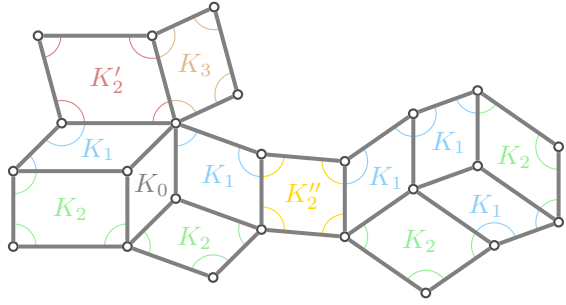
	
	Clearly, $k = |K_1| + 2|K_2| + 2|K'_2| + 2|K''_2| + 3|K_3|$.
	The sum of the interior angles of the simple polygon $M$ equals $(k-2)\pi$.
	Considering contributions to the sum for parallelograms in the categories above
	(see \Cref{fig:parallelogramCategories}), we have
	\begin{align*}
		|K_1|\pi + |K_2|\pi + 2|K'_2|\pi + 2|K''_2|\pi + 2|K_3|\pi &\leq (k-2)\pi  \\
		\iff \qquad 2 &\leq |K_2| + |K_3|\,.
	\end{align*}
	For a parallelogram $s$ in $K_2\cup K_3$,
	$S\setminus s$ satisfies the assumptions of \Cref{def:carpetFramework}.
	Thus, we have a \parallelogramTilingFramework{} $(G,\rho)$ and $G$ is in $\Grec$ by induction assumption.
	If $s\in K_3$, then $G$ can be extended to $G'$ by \addPar.
	If $s\in K_2$, then $G$ can be extended to $G'$ by \closePar,
	since the separation assumption is satisfied by \Cref{lem:placementExtension} and the placement~$\rho'$.  
\end{proof}

\subsection{Bracings}\label{sec:bracing}
A general \Pframework\ is flexible with many degrees of freedom.
By adding edges to the graph we can reduce this number.
In particular we are interested in adding diagonal edges of 4-cycles.
This process is called the bracing of the graph or framework.
\begin{definition}
	A \emph{braced ribbon-cutting graph} is a graph $G=(V_G,E_c\cup E_d)$ where $E_c$ and $E_d$ are two non-empty disjoint sets 
	such that the graph $(V_G,E_c)$ is a ribbon-cutting graph
	and the edges in $E_d$ correspond to diagonals of some 4-cycles of $(V_G,E_c)$.
	These diagonals are also called \emph{braces}.
	If $r$ is a ribbon of $(V_G,E_c)$, then 
	\begin{equation*}
		r \cup \{u_1u_3 \in E_d \colon \exists \text{ 4-cycle } (u_1,u_2,u_3,u_4) \text{ of } (V_G,E_c) \text{ s.t. }  u_1u_2,u_3u_4\in r\}
	\end{equation*}
	 is a \emph{ribbon} of the braced ribbon-cutting graph~$G$.
	
	The framework $(G,\rho)$ is called \emph{braced \Pframework{}} 
	if $G$ is a braced ribbon-cutting graph and $\rho$ is a parallelogram placement for $(V_G,E_c)$.
	\Cref{fig:bracedPframework} shows an example.
\end{definition}

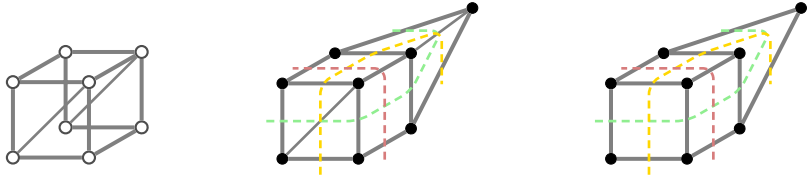
\begin{figure}[ht]
  \centering
  \begin{tikzpicture}
	    \node[fvertex] (a) at (0,0) {};
	    \node[fvertex] (b) at (1,0) {};
	    \node[fvertex] (c) at (1,1) {};
	    \node[fvertex] (d) at ($(a)+(c)-(b)$) {};
	    \node[fvertex,rotate around=30:(b)] (e) at ($(b)+(0.8,0)$) {};
	    \node[fvertex] (f) at ($(e)+(c)-(b)$) {};
	    \node[fvertex] (h) at ($(a)+(e)-(b)$) {};
	    \node[fvertex] (k) at ($(d)+(f)-(c)$) {};
	    
	    \draw[edge] (a)--(b) (b)--(c) (c)--(d) (d)--(a) (b)--(e) (e)--(f) (f)--(c) (h)--(k) (h)--(e) (d)--(k) (f)--(k);
	    \draw[brace] (a)--(c) (f)--(h);
	  \end{tikzpicture}
  \qquad\qquad
  \begin{tikzpicture}
	    \node[gvertex] (a) at (0,0) {};
	    \node[gvertex] (b) at (1,0) {};
	    \node[gvertex] (c) at (1,1) {};
	    \node[gvertex] (d) at ($(a)+(c)-(b)$) {};
	    \node[gvertex,rotate around=30:(b)] (e) at ($(b)+(0.8,0)$) {};
	    \node[gvertex] (f) at ($(e)+(c)-(b)$) {};
	    \node[gvertex] (h) at (2.5,2) {};
	    \node[gvertex] (k) at ($(d)+(f)-(c)$) {};
	    
	    \draw[edge] (a)--(b) (b)--(c) (c)--(d) (d)--(a) (b)--(e) (e)--(f) (f)--(c) (h)--(k) (h)--(e) (d)--(k) (f)--(k);
	    \draw[brace] (a)--(c) (f)--(h);
	    \draw[ribbon,col1] ($(a)!0.5!(d)$) -- ($(b)!0.5!(c)$) -- ($(e)!0.5!(f)$) -- ($(h)!0.5!(f)$) -- ($(k)!0.5!(h)$);
	    \draw[ribbon,col3] ($(a)!0.5!(b)$) -- ($(d)!0.5!(c)$) -- ($(f)!0.5!(k)$) -- ($(h)!0.5!(f)$) -- ($(h)!0.5!(e)$);
	    \draw[ribbon,col2] ($(b)!0.5!(e)$) -- ($(c)!0.5!(f)$) -- ($(d)!0.5!(k)$);
  \end{tikzpicture}
  \qquad\qquad
  \begin{tikzpicture}
	    \node[gvertex] (a) at (0,0) {};
	    \node[gvertex] (b) at (1,0) {};
	    \node[gvertex] (c) at (1,1) {};
	    \node[gvertex] (d) at ($(a)+(c)-(b)$) {};
	    \node[gvertex,rotate around=30:(b)] (e) at ($(b)+(0.8,0)$) {};
	    \node[gvertex] (f) at ($(e)+(c)-(b)$) {};
	    \node[gvertex] (h) at (2.5,2) {};
	    \node[gvertex] (k) at ($(d)+(f)-(c)$) {};
	    
	    \draw[edge] (a)--(b) (b)--(c) (c)--(d) (d)--(a) (b)--(e) (e)--(f) (f)--(c) (h)--(k) (h)--(e) (d)--(k) (f)--(k);
	    \draw[ribbon,col1] ($(a)!0.5!(d)$) -- ($(b)!0.5!(c)$) -- ($(e)!0.5!(f)$) -- ($(h)!0.5!(f)$) -- ($(k)!0.5!(h)$);
	    \draw[ribbon,col3] ($(a)!0.5!(b)$) -- ($(d)!0.5!(c)$) -- ($(f)!0.5!(k)$) -- ($(h)!0.5!(f)$) -- ($(h)!0.5!(e)$);
	    \draw[ribbon,col2] ($(b)!0.5!(e)$) -- ($(c)!0.5!(f)$) -- ($(d)!0.5!(k)$);
  \end{tikzpicture}
  \caption{An example of a braced \Pframework{} (left) with the underlying ribbon-cutting graph (right) 
  and its bracing (middle).}
  \label{fig:bracedPframework}
\end{figure}

\begin{remark}
	\label{rem:separationBraced}
	A ribbon of a braced ribbon-cutting graph $(V,E_c\cup E_d)$ is an edge cut if
	and only if the corresponding ribbon of $(V,E_c)$ is an edge cut.
\end{remark}
We construct a new graph, which encodes the relations between the ribbons, i.e., we ask whether they share 4-cycles.
A subgraph of this graph indicates whether some of the shared 4-cycles is braced.
\begin{definition}
	Let $G$ be a braced ribbon-cutting graph. The \emph{ribbon graph} $\Gamma$ of $G$
	is the graph with the set of vertices being the set of ribbons
	of $G$ and two ribbons $r_1,r_2$ are adjacent if and only if 
	there is a 4-cycle $(u_1, u_2, u_3, u_4)$ in $G$ such that 
	$u_1 u_2, u_3 u_4 \in r_1$ and $u_1 u_4, u_2 u_3 \in r_2$.
	The subgraph $(V_\Gamma, E_b)$ of $\Gamma$, where 
	\begin{equation*}
		E_b = \{r_1r_2\in E_\Gamma \colon r_1 \cap r_2 \text{ is a subset of braces of } G\}\,,
	\end{equation*}
	 is called the \emph{bracing (sub)graph}. See \Cref{fig:ribbonbracinggraph} for an example of these definitions.
	 
	\begin{figure}[ht]
	  \centering
	  \begin{tikzpicture}
			\begin{scope}[yshift=1.9cm]
			  \node[align=center,anchor=south,font=\scriptsize] at (1.35,0) {Ribbon-Cutting Graph};
			  \node[align=center,anchor=south,font=\scriptsize] at (5.35,0) {Braced\\Ribbon-Cutting Graph};
			  \node[align=center,anchor=south,font=\scriptsize] at (8.75,0) {Ribbon Graph};
			  \node[align=center,anchor=south,font=\scriptsize] at (11.75,0) {Bracing Graph};
			\end{scope}

			\begin{scope}
				\begin{scope}
					\node[gvertex] (a) at (0,0) {};
					\node[gvertex] (b) at (1,0) {};
					\node[gvertex] (c) at (1,1) {};
					\node[gvertex] (d) at ($(a)+(c)-(b)$) {};
					\node[gvertex,rotate around=30:(b)] (e) at ($(b)+(0.8,0)$) {};
					\node[gvertex] (f) at ($(e)+(c)-(b)$) {};
					\node[gvertex,rotate around=-20:(e)] (g) at ($(e)+(1.1,0)$) {};
					\node[gvertex] (h) at ($(g)+(f)-(e)$) {};
					\node[gvertex] (j) at ($(b)+(g)-(e)$) {};
					\node[gvertex] (k) at ($(d)+(f)-(c)$) {};
					
					\draw[edge] (a)--(b) (b)--(c) (c)--(d) (d)--(a) (b)--(e) (e)--(f) (f)--(c) (e)--(g) (g)--(h) (h)--(f) (g)--(j) (b)--(j) (d)--(k) (f)--(k);
					\draw[ribbon,col1] ($(a)!0.5!(d)$) -- ($(b)!0.5!(c)$) -- ($(e)!0.5!(f)$) -- ($(g)!0.5!(h)$);
					\draw[ribbon,col2] ($(a)!0.5!(b)$) -- ($(d)!0.5!(c)$) -- ($(f)!0.5!(k)$);
					\draw[ribbon,col3] ($(g)!0.5!(j)$) -- ($(b)!0.5!(e)$) -- ($(c)!0.5!(f)$) -- ($(d)!0.5!(k)$);
					\draw[ribbon,col4] ($(b)!0.5!(j)$) -- ($(e)!0.5!(g)$) -- ($(f)!0.5!(h)$);
				\end{scope}
				
				\begin{scope}[xshift=4cm]
					\node[gvertex] (a) at (0,0) {};
					\node[gvertex] (b) at (1,0) {};
					\node[gvertex] (c) at (1,1) {};
					\node[gvertex] (d) at ($(a)+(c)-(b)$) {};
					\node[gvertex,rotate around=30:(b)] (e) at ($(b)+(0.8,0)$) {};
					\node[gvertex] (f) at ($(e)+(c)-(b)$) {};
					\node[gvertex,rotate around=-20:(e)] (g) at ($(e)+(1.1,0)$) {};
					\node[gvertex] (h) at ($(g)+(f)-(e)$) {};
					\node[gvertex] (j) at ($(b)+(g)-(e)$) {};
					\node[gvertex] (k) at ($(d)+(f)-(c)$) {};
					
					\draw[edge] (a)--(b) (b)--(c) (c)--(d) (d)--(a) (b)--(e) (e)--(f) (f)--(c) (e)--(g) (g)--(h) (h)--(f) (g)--(j) (b)--(j) (d)--(k) (f)--(k);
					\draw[brace] (a)--(c) (b)--(f) (e)--(j);
					\draw[ribbon,col1] ($(a)!0.5!(d)$) -- ($(b)!0.5!(c)$) -- ($(e)!0.5!(f)$) -- ($(g)!0.5!(h)$);
					\draw[ribbon,col2] ($(a)!0.5!(b)$) -- ($(d)!0.5!(c)$) -- ($(f)!0.5!(k)$);
					\draw[ribbon,col3] ($(g)!0.5!(j)$) -- ($(b)!0.5!(e)$) -- ($(c)!0.5!(f)$) -- ($(d)!0.5!(k)$);
					\draw[ribbon,col4] ($(b)!0.5!(j)$) -- ($(e)!0.5!(g)$) -- ($(f)!0.5!(h)$);
				\end{scope}
				
				\begin{scope}[xshift=8.25cm]
					\node[gvertex,col1] (1) at (0,0) {};
					\node[gvertex,col2] (2) at (1,0) {};
					\node[gvertex,col3] (3) at (1,1) {};
					\node[gvertex,col4] (4) at (0,1) {};
					\draw[edge] (1)--(2) (2)--(3) (3)--(4) (4)--(1) (1)--(3);
				\end{scope}
				
				\begin{scope}[xshift=11.25cm]
					\node[gvertex,col1] (1) at (0,0) {};
					\node[gvertex,col2] (2) at (1,0) {};
					\node[gvertex,col3] (3) at (1,1) {};
					\node[gvertex,col4] (4) at (0,1) {};
					\draw[edge] (1)--(2) (3)--(4) (1)--(3);
				\end{scope}
			\end{scope}
	  
			\begin{scope}[yshift=-4cm]
				\begin{scope}
					\node[gvertex] (a) at (0,0) {};
					\node[gvertex] (b) at (1,0) {};
					\node[gvertex] (c) at (1,1) {};
					\node[gvertex] (d) at ($(a)+(c)-(b)$) {};
					\node[gvertex,rotate around=30:(b)] (e) at ($(b)+(0.8,0)$) {};
					\node[gvertex] (f) at ($(e)+(c)-(b)$) {};
					\node[gvertex,rotate around=-20:(e)] (g) at ($(e)+(1.1,0)$) {};
					\node[gvertex] (h) at ($(g)+(f)-(e)$) {};
					\node[gvertex] (k) at ($(d)+(f)-(c)$) {};
					\node[gvertex,rotate around=105:(k)] (l) at ($(k)+(0.9,0)$) {};
					\node[gvertex] (m) at ($(f)+(l)-(k)$) {};
					\node[gvertex,rotate around=25:(k)] (n) at ($(m)+(1.3,0)$) {};
					\node[gvertex] (o) at ($(f)+(n)-(m)$) {};
					
					\draw[edge] (a)--(b) (b)--(c) (c)--(d) (d)--(a) (b)--(e) (e)--(f) (f)--(c) (e)--(g) (g)--(h) (h)--(f) (d)--(k) (f)--(k) (k)--(l) (l)--(m) (m)--(f) (m)--(n) (n)--(o) (o)--(f);
					\draw[ribbon,col1] ($(a)!0.5!(d)$) -- ($(b)!0.5!(c)$) -- ($(e)!0.5!(f)$) -- ($(g)!0.5!(h)$);
					\draw[ribbon,col2] ($(a)!0.5!(b)$) -- ($(d)!0.5!(c)$) -- ($(f)!0.5!(k)$) -- ($(l)!0.5!(m)$);
					\draw[ribbon,col3] ($(b)!0.5!(e)$) -- ($(c)!0.5!(f)$) -- ($(d)!0.5!(k)$);
					\draw[ribbon,col4] ($(e)!0.5!(g)$) -- ($(f)!0.5!(h)$);
					\draw[ribbon,col5] ($(k)!0.5!(l)$) -- ($(f)!0.5!(m)$) -- ($(n)!0.5!(o)$);
					\draw[ribbon,col6] ($(m)!0.5!(n)$) -- ($(f)!0.5!(o)$);
				\end{scope}
				
				\begin{scope}[xshift=4cm]
					\node[gvertex] (a) at (0,0) {};
					\node[gvertex] (b) at (1,0) {};
					\node[gvertex] (c) at (1,1) {};
					\node[gvertex] (d) at ($(a)+(c)-(b)$) {};
					\node[gvertex,rotate around=30:(b)] (e) at ($(b)+(0.8,0)$) {};
					\node[gvertex] (f) at ($(e)+(c)-(b)$) {};
					\node[gvertex,rotate around=-20:(e)] (g) at ($(e)+(1.1,0)$) {};
					\node[gvertex] (h) at ($(g)+(f)-(e)$) {};
					\node[gvertex] (k) at ($(d)+(f)-(c)$) {};
					\node[gvertex,rotate around=105:(k)] (l) at ($(k)+(0.9,0)$) {};
					\node[gvertex] (m) at ($(f)+(l)-(k)$) {};
					\node[gvertex,rotate around=25:(k)] (n) at ($(m)+(1.3,0)$) {};
					\node[gvertex] (o) at ($(f)+(n)-(m)$) {};
					
					\draw[edge] (a)--(b) (b)--(c) (c)--(d) (d)--(a) (b)--(e) (e)--(f) (f)--(c) (e)--(g) (g)--(h) (h)--(f) (d)--(k) (f)--(k) (k)--(l) (l)--(m) (m)--(f) (m)--(n) (n)--(o) (o)--(f);
					\draw[brace] (a)--(c) (e)--(h) (f)--(l);
					\draw[ribbon,col1] ($(a)!0.5!(d)$) -- ($(b)!0.5!(c)$) -- ($(e)!0.5!(f)$) -- ($(g)!0.5!(h)$);
					\draw[ribbon,col2] ($(a)!0.5!(b)$) -- ($(d)!0.5!(c)$) -- ($(f)!0.5!(k)$) -- ($(l)!0.5!(m)$);
					\draw[ribbon,col3] ($(b)!0.5!(e)$) -- ($(c)!0.5!(f)$) -- ($(d)!0.5!(k)$);
					\draw[ribbon,col4] ($(e)!0.5!(g)$) -- ($(f)!0.5!(h)$);
					\draw[ribbon,col5] ($(k)!0.5!(l)$) -- ($(f)!0.5!(m)$) -- ($(n)!0.5!(o)$);
					\draw[ribbon,col6] ($(m)!0.5!(n)$) -- ($(f)!0.5!(o)$);
				\end{scope}
				
				\begin{scope}[xshift=8.75cm,yshift=1cm]
					\coordinate (o) at (0,0);
					\node[gvertex,col1] (1) at (1,0) {};
					\node[gvertex,col2,rotate around=60:(o)] (2) at (1,0) {};
					\node[gvertex,col3,rotate around=120:(o)] (3) at (1,0) {};
					\node[gvertex,col4,rotate around=180:(o)] (4) at (1,0) {};
					\node[gvertex,col5,rotate around=240:(o)] (5) at (1,0) {};
					\node[gvertex,col6,rotate around=300:(o)] (6) at (1,0) {};
					\draw[edge] (1)--(2) (1)--(3) (1)--(4) (2)--(3) (2)--(5) (5)--(6);
				\end{scope}
				
				\begin{scope}[xshift=11.75cm,yshift=1cm]
					\coordinate (o) at (0,0);
					\node[gvertex,col1] (1) at (1,0) {};
					\node[gvertex,col2,rotate around=60:(o)] (2) at (1,0) {};
					\node[gvertex,col3,rotate around=120:(o)] (3) at (1,0) {};
					\node[gvertex,col4,rotate around=180:(o)] (4) at (1,0) {};
					\node[gvertex,col5,rotate around=240:(o)] (5) at (1,0) {};
					\node[gvertex,col6,rotate around=300:(o)] (6) at (1,0) {};
					\draw[edge] (1)--(2) (1)--(4) (2)--(5);
				\end{scope}
			\end{scope}
	  \end{tikzpicture}
	  \caption{Two ribbon-cutting graphs with an example of a bracing as well as the corresponding ribbon graph and bracing graph.
	  The vertices in the ribbon graph and the bracing graph are colored in correspondence with the indicated ribbons.}
	  \label{fig:ribbonbracinggraph}
	\end{figure}
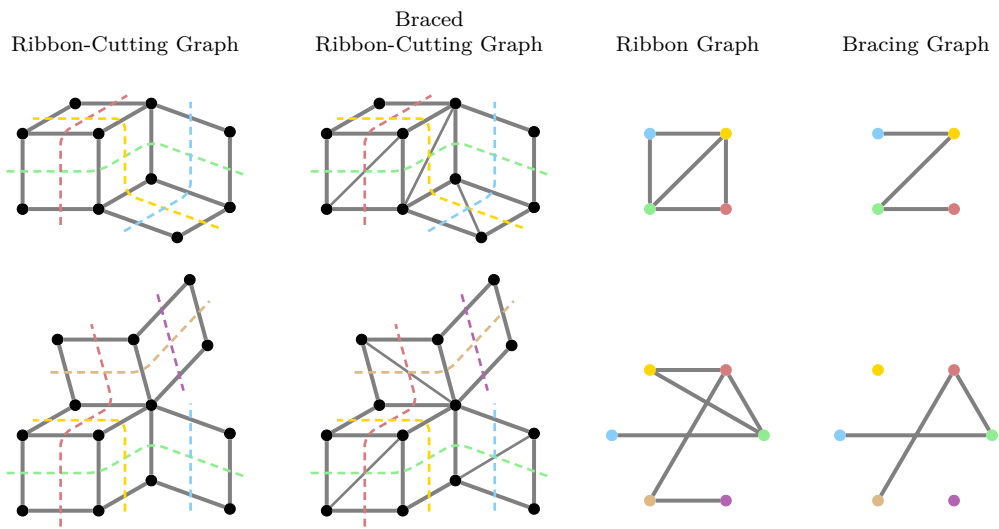
\end{definition}

We remark that the bracing subgraph according to the definition in \cite{DuarteFrancis} does not contain the ribbons which have no brace.
In our definition these ribbons are isolated vertices.

There are no loops in ribbon and bracing graphs if all ribbons of the underlying unbraced ribbon-cutting graph are simple.
An edge in a bracing graph does not determine uniquely a braced 4-cycle (see the yellow and green ribbon in \Cref{fig:bracedPframework}).

Now we have all definitions to recall the main theorem of \cite{DuarteFrancis}.
In the next section we extend this theorem to \Pframework s and also prove the other direction.
\begin{theorem}[\cite{DuarteFrancis}]
	Let $(G, \rho)$ be a braced \parallelogramTilingFramework.
	If the bracing graph of $G$ is a connected,
	then the braced framework is rigid.
\end{theorem}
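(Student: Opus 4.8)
The plan is to prove the contrapositive and to route it through cartesian NAC-colorings, the bridge announced in the introduction. Since a \parallelogramTilingFramework{} has its underlying graph in $\Grec$ and is therefore a \Pframework{} (by the corollaries of \Cref{lem:placementExtension}), it suffices to show the following: if a braced \Pframework{} $(G,\rho)$ admits a non-trivial proper flex, then the bracing graph of $G$ is disconnected. I would split this into an easy combinatorial implication, ``$G$ has a cartesian NAC-coloring $\Rightarrow$ the bracing graph of $G$ is disconnected'', and a harder analytic one, ``$(G,\rho)$ proper flexible $\Rightarrow$ $G$ has a cartesian NAC-coloring'', and then compose them; contrapositively, a connected bracing graph rules out cartesian NAC-colorings, hence rules out non-trivial proper flexes, i.e.\ forces rigidity.

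For the easy implication, let $\delta$ be a cartesian NAC-coloring of $G$. Two facts drive the argument: (i) in a cartesian NAC-coloring the opposite edges of every $4$-cycle get the same colour (the observation following \Cref{rem:cartesianNACcharacterization}), so $\delta$ is constant along each ribbon of the unbraced graph $(V_G,E_c)$, a ribbon being the transitive closure of the opposite-edges relation; (ii) every $3$-cycle is monochromatic in any NAC-coloring — it cannot carry two edges of each colour — so for any braced $4$-cycle $(u_1,u_2,u_3,u_4)$ the triangles $u_1u_2u_3$ and $u_1u_3u_4$ through the brace $u_1u_3$ force the four boundary edges and the brace to share a single colour. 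Combining (i) and (ii), $\delta$ is constant on each ribbon of the braced graph as well, and whenever two ribbons $r_1,r_2$ are joined in the bracing graph — so that some $4$-cycle common to $r_1$ and $r_2$ is braced — the colour of $r_1$ equals the colour of $r_2$. Thus every edge of the bracing graph joins equally coloured ribbons; since $\delta$ is surjective, both colours occur among the edges and hence among the ribbons, and the resulting nonempty bipartition of the ribbon set witnesses that the bracing graph is disconnected.

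For the harder implication I would invoke the construction behind the forward direction of \Cref{thm:nacflexible} from \cite{flexibleLabelings}: a non-constant flex is a real analytic arc inside the complex realization variety of $G$, and passing to a suitable place of the associated function field one colours an edge $uv$ according to which of the two isotropic differences $z_u-z_v$, $\overline{z_u}-\overline{z_v}$ (for the complexified coordinates of $\rho$) acquires positive valuation, thereby obtaining a NAC-coloring. The extra ingredient, matching the remark recalled just after \Cref{thm:nacflexible}, is that properness of the flex upgrades this coloring to a cartesian one: if two distinct vertices $u,v$ were joined by both a red path and a blue path, then along these paths both $z_u-z_v$ and $\overline{z_u}-\overline{z_v}$ stay ``finite'' at the place, which forces $\rho_t(u)=\rho_t(v)$ on a dense set of parameters and hence for all $t$ by analyticity, contradicting injectivity of the placements. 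Chaining the two implications then closes the argument.

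The main obstacle is the harder implication, and within it the step that turns a NAC-coloring \emph{extracted} from a given flex into a cartesian one: the analysis recalled after \Cref{thm:nacflexible} is phrased for a flex \emph{synthesised} from a coloring, so one must verify in the opposite direction that a red-and-blue path pair genuinely obstructs injectivity of the extracted flex, which requires controlling how the chosen place interacts with the red and blue subgraphs. A second, more elementary pitfall worth flagging is that a finite flex of a braced \Pframework{} need not keep every $4$-cycle a non-degenerate parallelogram — an \emph{unbraced} $4$-cycle could a priori pass through a collinear configuration and flip to an anti-parallelogram (a braced $4$-cycle cannot, its diagonal lying strictly between $|a-b|$ and $a+b$), so the naive ``each ribbon translates as a rigid block'' argument à la \Cref{lem:ribbonTranslation} does not transfer verbatim to the flexed placements; this is precisely why the coloring detour is convenient. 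As an alternative that sidesteps both points one could instead differentiate a non-trivial flex at $t=0$ to obtain a non-trivial infinitesimal flex and then run a Bolker--Crapo/\cite{NagyKem2017}-style linear argument — assign each ribbon an angular velocity, observe that a brace equates the angular velocities of its two ribbons (the cross product of the two non-parallel ribbon vectors being nonzero), conclude from connectivity that all ribbons rotate together, and read off that the infinitesimal flex is a rotation plus translation — which, together with the standard implication that infinitesimal rigidity implies rigidity, already yields the finite statement; but it does not fit the NAC-coloring framework developed here.
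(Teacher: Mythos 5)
Your overall architecture --- reduce a braced \parallelogramTilingFramework{} to a braced \Pframework{}, then chain ``flexible $\Rightarrow$ cartesian NAC-coloring $\Rightarrow$ bracing graph disconnected'' --- is exactly the route the paper takes to its generalization (\Cref{thm:flexGivesCartNAC}, \Cref{thm:bracingGraphConnectedIffNoCartNAC}, assembled in the proof of \Cref{thm:main_result}), and your ``easy implication'' is correct: it is the forward direction of \Cref{thm:bracingGraphConnectedIffNoCartNAC} combined with \Cref{lem:cartesianIffRibbonsMonochromatic}, with the triangle argument for braced $4$-cycles supplying monochromaticity across a bracing-graph edge.

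The gap is in the hard implication. Your mechanism for making the extracted NAC-coloring cartesian --- a red path and a blue path between $u\neq v$ would make both $z_u-z_v$ and $\overline{z_u}-\overline{z_v}$ ``stay finite'' at the chosen place, hence force $\rho_t(u)=\rho_t(v)$ --- does not work. Finiteness (or even strictly positive valuation) of a function at one place of the function field of the curve $C$ of placements controls only the limit at the centre of that place, which need not be a real, injective, or finite placement occurring in the flex; it says nothing about a dense set of parameters $t$, so no contradiction with properness is obtained. Moreover properness is not what the paper uses. The actual mechanism of \Cref{thm:flexGivesCartNAC} is that on the \emph{irreducible} curve $C$ one has $W_{u_1,u_2}=W_{u_4,u_3}$ identically for the opposite edges of every $4$-cycle that is a non-degenerate parallelogram at $\rho$: the parallelogram and antiparallelogram loci are closed and cover $C$, and $\rho$ lies only in the former, so irreducibility forbids the flip you worry about. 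Equal functions get equal valuations, hence opposite edges get the same colour; braced $4$-cycles are monochromatic via their triangles; so every ribbon is monochromatic, and \Cref{lem:cartesianIffRibbonsMonochromatic} together with the separation property from \Cref{thm:ribbonCutGraphProperties} and \Cref{rem:separationBraced} yields cartesian-ness. In short, the antiparallelogram issue you flag as a reason the colouring detour is ``convenient'' is precisely the point the colouring argument must itself address (and does, via irreducibility), while the properness argument you substitute is both unnecessary and unsound as stated. Your closing sketch via infinitesimal rigidity would be a legitimate independent route in the spirit of Bolker--Crapo and \cite{NagyKem2017}, but as given it is only an outline.
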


\section{Flexibility of braced \Pframework{}s}\label{sec:flex}
In this section we determine when a bracing makes the framework rigid and in which cases it remains flexible.
We use cartesian NAC-colorings for that. The theory is therefore based on \cite{flexibleLabelings}.
Indeed, we show that a \Pframework\ is flexible if and only if it has a cartesian NAC-coloring.
This finally leads to a proof of the main theorem.

Cartesian NAC-colorings of a subclass of ribbon-cutting graphs can be characterized using ribbons.
\begin{lemma}
	\label{lem:cartesianIffRibbonsMonochromatic}
	Let $G$ be a braced ribbon-cutting graph such that every two vertices are separated by a ribbon.
	A NAC-coloring of $G$ is cartesian if and only if each ribbon of $G$ is monochromatic. 
\end{lemma}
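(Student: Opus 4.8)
The plan is to prove the two implications separately; both are short, using only the defining properties of NAC/cartesian colorings together with the definition of ribbons of a braced graph.

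For the implication ``every ribbon is monochromatic $\implies$ $\delta$ is cartesian'' I would argue by contradiction. If $\delta$ is not cartesian, there are distinct vertices $u,v$ joined by a red path $P_1$ (in $G_\red^\delta$) and by a blue path $P_2$ (in $G_\blue^\delta$). By hypothesis some ribbon $r$ of $G$ separates $u$ and $v$, i.e.\ they lie in different connected components of $G\setminus r$ (that $r$ is an edge cut also follows from $G$ being a braced ribbon-cutting graph via \Cref{rem:separationBraced}). Hence every $u$--$v$ walk in $G$ uses at least one edge of $r$; in particular so do $P_1$ and $P_2$. Since $r$ is monochromatic, all of its edges share one color: if that color is red, then $P_2$ contains a red edge, contradicting that $P_2$ is a blue path; if it is blue, then $P_1$ contains a blue edge, contradicting that $P_1$ is a red path. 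Either way we reach a contradiction, so $\delta$ is cartesian.

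For the converse, assume $\delta$ is cartesian and fix a ribbon $r$ of $G$. By the definition of ribbons of a braced ribbon-cutting graph, $r = r_0 \cup D$, where $r_0$ is a ribbon of the unbraced graph $(V_G,E_c)$ and $D$ is the set of braces attached to $r_0$. I would first show that $r_0$ is monochromatic: $r_0$ is an equivalence class of the reflexive--transitive closure of ``being opposite edges of a 4-cycle of $(V_G,E_c)$'', every such 4-cycle is a 4-cycle subgraph of $G$, and in a cartesian NAC-coloring a 4-cycle subgraph is either monochromatic or has equally colored opposite edges (the observation recorded just after \Cref{rem:cartesianNACcharacterization}); in both cases the relation preserves the color, so all edges of $r_0$ receive a single color $c$. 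For a brace $u_1u_3 \in D$, pick a 4-cycle $(u_1,u_2,u_3,u_4)$ of $(V_G,E_c)$ with $u_1u_2, u_3u_4 \in r_0$; then $(u_1,u_2,u_3)$ is a triangle in $G$, which is monochromatic in any NAC-coloring (a 3-cycle cannot carry two edges of each color), whence $\delta(u_1u_3) = \delta(u_1u_2) = c$. Thus every edge of $r = r_0 \cup D$ has color $c$.

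I do not expect a genuine obstacle; the points that need care are that the restriction $\delta|_{E_c}$ need not be surjective (so the color propagation along $r_0$ must be phrased via 4-cycles of $(V_G,E_c)$ viewed inside $G$, not via a NAC-coloring of $(V_G,E_c)$) and that the braces must be handled by the separate triangle argument rather than being folded into the 4-cycle chain. Note also that the separation hypothesis is used only in the first implication, where it is exactly what forces the two monochromatic paths to cross the separating ribbon.
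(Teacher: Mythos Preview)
Your proof is correct and follows essentially the same approach as the paper's. The paper argues identically but more tersely: for the forward direction it invokes the observation that in a cartesian NAC-coloring opposite edges of any 4-cycle share a color, together with the fact that a braced 4-cycle (hence its diagonal) is monochromatic---which is exactly your triangle argument---to conclude that ribbons are monochromatic; for the reverse direction it uses the separating ribbon to block simultaneous red and blue paths, just as you do. Your explicit decomposition $r = r_0 \cup D$ and the remark about $\delta|_{E_c}$ possibly failing surjectivity make the write-up more careful than the paper's, but the underlying ideas coincide.
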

\begin{proof}
	Let $\delta$ be a NAC-coloring.
	If $\delta$ is cartesian, then all 4-cycles are either monochromatic or opposite edges have the same color.
	Since the edges of a braced 4-cycle have the same color, ribbons are monochromatic.
	On the other hand, if the ribbons are monochromatic, then two vertices cannot be connected
	by a \blue{} and \red{} path simultaneously since they are separated by a ribbon.
\end{proof}

\begin{theorem}
	\label{thm:flexGivesCartNAC}
	If a braced \Pframework{} $(G, \rho)$ is flexible,
	then $G$ has a cartesian NAC-coloring.
\end{theorem}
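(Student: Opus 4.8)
The plan is to reduce the claim to the combinatorial statement that the bracing graph of $G$ is disconnected, and then to prove that statement from the flex by the algebraic method of~\cite{flexibleLabelings}, exploiting the parallelogram structure.

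\emph{Reduction.} First I would show that it suffices to prove that the bracing graph of $G$ is disconnected. Suppose its connected components induce a partition $\mathcal R_1\sqcup\dots\sqcup\mathcal R_\ell$, $\ell\geq 2$, of the set of ribbons of $G$. Color an edge of $G$ by \red{} if it lies in a ribbon from $\mathcal R_1$ and by \blue{} otherwise; this is well defined because a brace lies in exactly the two ribbons of the $4$-cycle it braces, and these two ribbons are adjacent in the bracing graph, hence in the same $\mathcal R_i$. The coloring is surjective. Each ribbon of $G$ is an edge cut by~\Cref{rem:separationBraced}, so every cycle of $G$ meets it in an even number of edges; hence a non-monochromatic cycle contains at least two edges of each color and the coloring is a NAC-coloring. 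Since $(V_G,E_c,\rho)$ is a \Pframework{}, every two vertices are separated by a ribbon of $(V_G,E_c)$, hence of $G$, by~\Cref{thm:ribbonCutGraphProperties} and~\Cref{rem:separationBraced}; as each ribbon is monochromatic, \Cref{lem:cartesianIffRibbonsMonochromatic} shows the coloring is cartesian. It therefore remains to prove that if the bracing graph of $G$ is connected, then $(G,\rho)$ is rigid.

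\emph{Algebraic setup and parallelogram $4$-cycles.} Assume the bracing graph is connected and, for contradiction, that $(G,\rho)$ admits a non-trivial flex. As in~\cite{flexibleLabelings}, from the flex one extracts an irreducible algebraic curve $C$ in the space of realizations of $G$ with the prescribed edge lengths that contains $\rho$ and the flex. Put $\KK=\CC(C)$ and, for each vertex $v$, let $z_v=x_v+\ci y_v$ and $w_v=x_v-\ci y_v$ be the complexified coordinate functions in $\KK$, so that every edge or brace $uv$ satisfies $(z_u-z_v)(w_u-w_v)=\lambda_{uv}$ for a constant $\lambda_{uv}\in\CC^{*}$; write $\overline{\,\cdot\,}$ for the automorphism of $\KK$ swapping $z_v\leftrightarrow w_v$. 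For a braced $4$-cycle $(u_1,u_2,u_3,u_4)$ with brace $u_1u_3$, the rigid triangles $u_1u_2u_3$ and $u_1u_3u_4$ share the edge $u_1u_3$ and form a non-degenerate parallelogram at $\rho$, so by continuity the flex keeps the $4$-cycle congruent to that parallelogram; thus $z_{u_2}-z_{u_1}=z_{u_3}-z_{u_4}$ and $z_{u_3}-z_{u_2}=z_{u_4}-z_{u_1}$ in $\KK$. For an arbitrary $4$-cycle of $(V_G,E_c)$ the realization space is the union of two closed subsets, according to whether the $4$-cycle is a parallelogram or the corresponding antiparallelogram; since $C$ is irreducible and $\rho\in C$ lies in the parallelogram locus but, being a non-degenerate parallelogram, not in the antiparallelogram one, $C$ lies in the parallelogram locus. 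Hence every $4$-cycle of $(V_G,E_c)$ stays a parallelogram along $C$, so each ribbon $r$ of $(V_G,E_c)$ has a well-defined direction $\zeta_r\in\KK^{*}$, the common value of $z_v-z_u$ over the coherently oriented edges $uv\in r$, with $\zeta_r\overline{\zeta_r}=\lambda_r\in\CC^{*}$; moreover, if $u_1u_3$ braces a $4$-cycle with $u_1u_2,u_3u_4\in r$ and $u_2u_3,u_1u_4\in r'$, then $z_{u_3}-z_{u_1}=\pm\zeta_r\pm\zeta_{r'}$.

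\emph{The bracing argument.} Expanding $(z_{u_3}-z_{u_1})(w_{u_3}-w_{u_1})=\lambda_{u_1u_3}$ and removing the constants $\zeta_r\overline{\zeta_r}$ and $\zeta_{r'}\overline{\zeta_{r'}}$ gives $\zeta_r\overline{\zeta_{r'}}+\overline{\zeta_r}\zeta_{r'}\in\CC$. Dividing by $\zeta_{r'}\overline{\zeta_{r'}}\in\CC^{*}$, the element $\phi=\zeta_r/\zeta_{r'}$ satisfies $\phi+\overline\phi\in\CC$ and $\phi\overline\phi=\lambda_r/\lambda_{r'}\in\CC$, so $\phi$ is a root of a quadratic polynomial over $\CC$; as $\CC$ is algebraically closed, $\phi\in\CC$. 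Thus $\zeta_r$ and $\zeta_{r'}$ are $\CC$-proportional whenever the ribbons $r,r'$ share a brace, and since the bracing graph is connected, all $\zeta_r$ are pairwise $\CC$-proportional, say $\zeta_r=\mu_r\psi$ with $\mu_r\in\CC^{*}$ and one $\psi\in\KK^{*}$. Then $\psi\overline\psi=\lambda_r/\lvert\mu_r\rvert^{2}$ is a positive real constant, so on the real locus of $C$ the factor $\psi/\psi(\rho)$ has modulus $1$; consequently every realization on $C$ is obtained from $\rho$ by rotating all edge vectors by this factor and translating, hence is congruent to $\rho$. This contradicts the non-triviality of the flex, and the proof is complete.

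\emph{Main obstacle.} The delicate step is that \emph{all} $4$-cycles, not only the braced ones, remain parallelograms along $C$; this rests on choosing $C$ irreducible through the original non-degenerate parallelogram placement and on the decomposition of the realization space into a parallelogram locus and an antiparallelogram locus. The other technical point is the passage from a merely continuous flex to such a curve $C$, which I would carry out following the argument of~\cite{flexibleLabelings}.
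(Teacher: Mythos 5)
Your proof is correct, but it takes a genuinely different route from the paper. The paper applies the NAC-coloring construction of~\cite[Theorem~3.1]{flexibleLabelings} directly to the curve $C$: since $W_{u_1,u_2}=W_{u_4,u_3}$ for opposite edges of a parallelogram $4$-cycle, the resulting coloring is constant on ribbons and monochromatic on braced $4$-cycles, hence cartesian by \Cref{lem:cartesianIffRibbonsMonochromatic}. You instead prove the contrapositive chain: you re-derive the implication ``bracing graph disconnected $\Rightarrow$ cartesian NAC-coloring exists'' (which is one direction of \Cref{thm:bracingGraphConnectedIffNoCartNAC}, so this part of your argument duplicates work the paper does elsewhere), and then give a direct algebraic proof that a connected bracing graph forces rigidity --- effectively a self-contained proof of implication (3)$\Rightarrow$(1) of \Cref{thm:main_result}, i.e.\ of the Duarte--Francis direction for \Pframework{}s, bypassing NAC-colorings entirely. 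The quadratic trick ($\phi+\conjugate{\phi}$ and $\phi\conjugate{\phi}$ constant force $\phi\in\CC$ by algebraic closedness) is a nice payoff, and your explicit treatment of the parallelogram-versus-antiparallelogram dichotomy on the irreducible curve $C$ is more careful than the paper, whose assertion $W_{u_1,u_2}=W_{u_4,u_3}$ as an identity in the function field silently relies on exactly this point. Two small caveats: the map $\conjugate{\,\cdot\,}$ swapping $z_v\leftrightarrow w_v$ should be taken as the \emph{semilinear} automorphism induced by complex conjugation of coordinates (it preserves $C$ because $C$ is the Zariski closure of an infinite set of real placements, hence defined over $\RR$); your later use of $\lvert\mu_r\rvert^{2}$ shows this is what you intend, but ``the automorphism of $\KK$ swapping $z_v\leftrightarrow w_v$'' is not well defined without this justification. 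Second, the final contradiction requires $C$ to be chosen through infinitely many pairwise non-congruent placements of the flex, which is indeed how the curve is produced in~\cite{flexibleLabelings}, so deferring to that argument is legitimate.
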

\begin{proof}
	A NAC-coloring for $G$ can be constructed as in the proof of~\cite[Theorem 3.1]{flexibleLabelings}.
	The existence of a flex implies that there is an irreducible algebraic curve $C$ of placements equivalent to $\rho$.
	For vertices $u,v\in V_G$ and a placement in $C$, let $(x_u,y_u)$ and $(x_v,y_v)$ be the coordinates of $u$ and $v$ in the placement.
	The proof of the theorem defines
	\begin{equation*}
		W_{u,v} = (x_v - x_u) + \ci (y_v - y_u)\,.
	\end{equation*}
	Let $(u_1,u_2,u_3,u_4)$ be a parallelogram. 
  	Since $W_{u_1,u_2}=W_{u_4,u_3}$	for the opposite edges $u_1u_2$ and $u_3u_4$,
	we have that $u_1u_2$ and $u_3u_4$ have the same color in the NAC-coloring constructed as in~\cite[Theorem 3.1]{flexibleLabelings}.
	Therefore, ribbons are monochromatic since a 4-cycle with a diagonal is monochromatic.
	The NAC-coloring is cartesian by \Cref{lem:cartesianIffRibbonsMonochromatic}
	since every two vertices are separated by a ribbon by \Cref{thm:ribbonCutGraphProperties}
	applied to the underlying unbraced \Pframework{} and \Cref{rem:separationBraced}.
\end{proof}

\begin{lemma}
	\label{lem:monochromaticSumInvariant}
	Let $(G, \rho)$ be a \Pframework.
	Let $u,v\in V_G$
	and $W,W'$ be walks from $u$ to~$v$ in $G$.
	If $G$ has a cartesian NAC-coloring $\delta$ and $c\in\{\red, \blue\}$, then
	\begin{equation*}
		\sum_{\substack{\oriented{w_1}{w_2}\in W \\ \delta(w_1w_2)=c }} (\rho(w_2)-\rho(w_1))
		= \sum_{\substack{\oriented{w_1}{w_2}\in W' \\ \delta(w_1w_2)=c}} (\rho(w_2)-\rho(w_1))\,.
	\end{equation*}
\end{lemma}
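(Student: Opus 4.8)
The plan is to reduce the assertion to the vanishing of a single ``monochromatic sum'' over a closed walk, and then to evaluate that sum one ribbon at a time, using the results of \Cref{sec:ribbons}.

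First I would record two elementary bookkeeping facts about the quantity $S_c(W)$, defined as the sum of $\rho(w_2)-\rho(w_1)$ over all oriented edges $\oriented{w_1}{w_2}$ of $W$ (with the edges of a walk read as a multiset, as above) with $\delta(w_1w_2)=c$: it is additive under concatenation of walks, and reversing a walk negates it while leaving the colours unchanged. Hence, if $Z$ denotes the closed walk obtained by traversing $W$ from $u$ to $v$ and then $W'$ backwards from $v$ to $u$, then $S_c(Z)=S_c(W)-S_c(W')$, and it suffices to prove that
\begin{equation*}
	S_c(Z) = \sum_{\substack{\oriented{w_1}{w_2}\in Z \\ \delta(w_1w_2)=c}} (\rho(w_2)-\rho(w_1)) = 0
\end{equation*}
for every closed walk $Z$ in $G$.

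The crux is the cartesian hypothesis. Since $\delta$ is cartesian, every $4$-cycle of $G$ is either monochromatic or carries the alternating red--blue--red--blue pattern; in both cases opposite edges of a $4$-cycle share a colour, so by transitivity of the ribbon relation every ribbon $r$ of $G$ is monochromatic. Writing $\delta(r)$ for that common colour, the $c$-coloured edges of $G$ are exactly the union of the ribbons with $\delta(r)=c$, and therefore
\begin{equation*}
	S_c(Z) = \sum_{\substack{r \text{ ribbon of } G \\ \delta(r)=c}} \ \sum_{\oriented{w_1}{w_2}\in r\cap Z} (\rho(w_2)-\rho(w_1))\,.
\end{equation*}

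It remains to see that each inner sum vanishes. As $(G,\rho)$ is a \Pframework{}, the graph $G$ is ribbon-cutting, so every ribbon $r$ is an edge cut, and $\rho$ is a parallelogram placement, so $r$ is simple by \Cref{rem:parallelogramPlacementImpliesSimpleRibbons}. The endpoints of the closed walk $Z$ coincide and hence lie in the same connected component of $G\setminus r$, so $|r\cap Z|$ is even by \Cref{lem:twoComponents}; then \Cref{lem:sumOfVectorsInRibbon} gives $\sum_{\oriented{w_1}{w_2}\in r\cap Z}(\rho(w_2)-\rho(w_1))=0$. Summing over the ribbons with $\delta(r)=c$ yields $S_c(Z)=0$, and the lemma follows. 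I do not expect a genuine obstacle: the substantive step is the passage ``cartesian $\Rightarrow$ ribbons monochromatic'', which is exactly what lets the NAC-coloring side talk to the ribbon machinery, and the only point needing care is the multiset convention for walks that repeat edges --- but that is precisely the setting in which \Cref{lem:sumOfVectorsInRibbon} is stated.
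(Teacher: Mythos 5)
Your proposal is correct and follows essentially the same route as the paper: concatenate $W$ with the reverse of $W'$ to get a closed walk, observe that ribbons are monochromatic under a cartesian NAC-coloring, note that each ribbon meets the closed walk an even number of times by \Cref{lem:twoComponents}, and conclude via \Cref{lem:sumOfVectorsInRibbon}. The only cosmetic difference is that you spell out the ribbon-by-ribbon decomposition of the sum and re-derive the monochromaticity of ribbons directly, where the paper cites it more tersely.
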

\begin{proof}
	Let $\widehat{W}$ be the walk obtained by concatenating $W$ and the inverse of $W'$.
	We consider the sum
	\begin{equation*}
		\sum_{\substack{\oriented{w_1}{w_2}\in \widehat{W} \\ \delta(w_1w_2)=c }} (\rho(w_2)-\rho(w_1)) \,.
	\end{equation*}
	Since $\widehat{W}$ is closed and ribbons are simple by \Cref{rem:parallelogramPlacementImpliesSimpleRibbons},
	$|r\cap \widehat{W}|$ is even for every ribbon~$r$ (this is a consequence of \Cref{lem:twoComponents}).
	As each ribbon $r$ is monochromatic in a cartesian NAC-coloring, the number of edges in $r\cap \widehat{W}$
	included in the sum is even. Hence, the sum is zero by \Cref{lem:sumOfVectorsInRibbon}.
\end{proof}
Using the lemma we show the reverse direction of \Cref{thm:flexGivesCartNAC}.
The proof is constructive, i.e., it provides a flex.

\begin{theorem}
	\label{thm:cartNACimpliesFlex}
	If a braced \Pframework{} has a cartesian NAC-coloring, then it is flexible.
\end{theorem}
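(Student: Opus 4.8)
The plan is to construct the flex explicitly, following the grid construction of \cite{flexibleLabelings}: a cartesian NAC-coloring $\delta$ lets one split, for every vertex $v$, the displacement $\rho(v)-\rho(v_0)$ from a fixed base vertex $v_0$ into a \red{} part and a \blue{} part, and the flex will rotate the \blue{} part rigidly by a time-dependent rotation while keeping the \red{} part fixed. First I would collect the coloring facts needed. Write $G=(V_G,E_c\cup E_d)$ and $G_c=(V_G,E_c)$. A braced $4$-cycle is monochromatic under $\delta$: a brace cuts it into two $3$-cycles, and a $3$-cycle is monochromatic in any NAC-coloring, so the brace gets the common colour of both triangles. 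Hence $\delta|_{G_c}$ is still surjective (if $E_c$ were monochromatic, every brace, being a diagonal of a monochromatic $4$-cycle, would inherit that colour, contradicting surjectivity of $\delta$ on $E_G$) and is cartesian since $G_c$ is a subgraph of $G$; so $\delta|_{G_c}$ is a cartesian NAC-coloring of the \Pframework{} $(G_c,\rho)$, to which \Cref{thm:ribbonCutGraphProperties}, \Cref{lem:cartesianIffRibbonsMonochromatic} and \Cref{lem:monochromaticSumInvariant} all apply. By \Cref{thm:ribbonCutGraphProperties} and \Cref{rem:separationBraced}, every two vertices of $G$ are separated by a ribbon, and every ribbon of $G$ is monochromatic by \Cref{lem:cartesianIffRibbonsMonochromatic}.

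For a walk $W$ from $v_0$ to $v$ and $c\in\{\red,\blue\}$ put
\[
	P_c(v)=\sum_{\substack{\oriented{w_1}{w_2}\in W\\ \delta(w_1w_2)=c}}(\rho(w_2)-\rho(w_1))\,.
\]
If $W$ traverses a brace, it may be replaced by the two $E_c$-edges of its monochromatic triangle without changing $P_\red(v)$ or $P_\blue(v)$, so we may assume $W$ lies in $G_c$; then \Cref{lem:monochromaticSumInvariant} applied to $(G_c,\rho)$ shows $P_\red(v)$ and $P_\blue(v)$ are independent of the chosen walk, hence well defined, and $P_\red(v)+P_\blue(v)=\rho(v)-\rho(v_0)$. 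Letting $R_t$ be the rotation of $\RR^2$ about the origin through angle $t$, define
\[
	\rho_t(v)=\rho(v_0)+P_\red(v)+R_t\bigl(P_\blue(v)\bigr)\,.
\]
Then $t\mapsto\rho_t$ is continuous and $\rho_0=\rho$.

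To see $(G,\rho_t)$ is equivalent to $(G,\rho)$, take $uv\in E_G$ and extend a walk from $v_0$ to $u$ by $uv$; this adds $\rho(v)-\rho(u)$ to exactly one of $P_\red,P_\blue$ according to $\delta(uv)$, so $\rho_t(v)-\rho_t(u)$ equals $\rho(v)-\rho(u)$ when $\delta(uv)=\red$ and $R_t(\rho(v)-\rho(u))$ when $\delta(uv)=\blue$; either way its length is $\|\rho(v)-\rho(u)\|$. The same computation covers $E_c$-edges and braces alike, so $\rho_t$ preserves all edge lengths.

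It remains to make the flex proper and non-trivial. Since $\rho_0=\rho$ is injective and $V_G$ is finite, $\|\rho_t(u)-\rho_t(v)\|>0$ for all $u\neq v$ on some interval $[0,\varepsilon]$, so each $\rho_t$ there is injective. Because $G_c$ is connected and $\delta|_{G_c}$ uses both colours, some vertex $v^*$ is incident to a \red{} edge $v^*a$ and a \blue{} edge $v^*b$; then $\rho_t(a)-\rho_t(v^*)$ is constant while $\rho_t(b)-\rho_t(v^*)=R_t(\rho(b)-\rho(v^*))$, so the angle at $v^*$ between these edges varies with $t$, whereas any congruence preserves it. Hence $\rho_t$ is non-congruent to $\rho$ for all but at most one value of $t$ in $[0,\varepsilon]$; shrinking $\varepsilon$ and reparametrising $[0,\varepsilon]$ to $[0,1]$ yields a non-trivial proper flex, so $(G,\rho)$ is flexible. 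I expect the only load-bearing step to be the well-definedness of $P_\red$ and $P_\blue$, which is precisely \Cref{lem:monochromaticSumInvariant}; the genuinely new care, compared with the earlier NAC-coloring constructions that produce only infinitesimal or possibly self-overlapping flexes, lies in the last paragraph — showing the constructed flex stays injective and is not a rigid motion.
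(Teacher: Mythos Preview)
Your proof is correct and follows essentially the same construction as the paper: both split the displacement $\rho(v)-\rho(v_0)$ into a \red{} and a \blue{} part via \Cref{lem:monochromaticSumInvariant}, rotate one part by $R_t$, and check that $\rho_0=\rho$. The only cosmetic differences are that you handle braces by locally replacing each one with two same-coloured $E_c$-edges (the paper strips all braces at once and observes the resulting flex is unchanged), and that you spell out the injectivity and non-triviality arguments which the paper simply cites from \cite{flexibleLabelings}.
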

\begin{proof}
	Let $(G',\rho)$ be a braced \Pframework{} and $\delta'$ be a cartesian NAC-coloring of $G'$.
	We can assume that $\rho(\bar{u})=(0,0)$ for a fixed vertex $\bar{u}\in G'$.
	Using the ``zigzag'' grid construction from~\cite{flexibleLabelings},
	there is a proper flexible framework $(G', \rho')$.
	Hence, it is sufficient to show that
	the ``zigzag'' grid can be chosen so that $\rho'=\rho$.
	Let $G$ be the graph~$G'$ with braces removed and $\delta$ be the NAC-coloring of $G$
	obtained by restricting $\delta'$.
	Since monochromatic 4-cycles preserve their shapes during a flex from a ``zigzag'' grid construction,
	the flex of $G$ constructed from $\delta$ is the same as the flex of $G'$ constructed from $\delta'$
	if the same ``zigzag'' grid is used.
	Hence, we have to find a ``zigzag'' grid such that the flex of $G$ obtained by using $\delta$ starts at $\rho$.
	
	Let $R_1, \dots, R_m$, resp.\ $B_1, \dots, B_n$, be the vertex sets
	of the connected components of $G_\red^\delta$, resp.\ $G_\blue^\delta$.
	We define a map $\rho_\red:\{R_1, \dots, R_m\}\rightarrow\RR^2$ as follows:
	for $R_i$, let $W$ be any walk from $\bar{u}$ to a vertex of $G$ in $R_i$ and
	\begin{equation*}
		\rho_\red (R_i) = \sum_{\substack{\oriented{w_1}{w_2}\in W \\ \delta(w_1w_2)=\blue }} (\rho(w_2)-\rho(w_1))\,.
	\end{equation*}
	\Cref{lem:monochromaticSumInvariant} guarantees that it is well-defined,
	namely, the sum is independent of the choice of $W$ and the vertex in $R_i$.
	We define $\rho_\blue:\{B_1, \dots, B_n\}\rightarrow\RR^2$ analogously by swapping \red{} and \blue.
	
	For $t\in[0,2\pi]$ and $v\in V_G$, where $v\in R_i\cap B_j$, let
	\begin{equation*}
		\rho_t(v) = \begin{pmatrix}
		\cos(t) & \sin(t) \\
		-\sin(t) & \cos(t)
		\end{pmatrix}\cdot \rho_\red(R_i) + \rho_\blue(B_j)\,.
	\end{equation*}
	If $W$ is a walk from $\bar{u}$ to $v$, then 
	\begin{align*}
		\rho_0(v) &= \rho_\red(R_i) + \rho_\blue(B_j) 
			= \sum_{\substack{\oriented{w_1}{w_2}\in W \\ \delta(w_1w_2)=\blue }} (\rho(w_2)-\rho(w_1))
			+ \sum_{\substack{\oriented{w_1}{w_2}\in W \\ \delta(w_1w_2)=\red }} (\rho(w_2)-\rho(w_1)) \\
			&= \sum_{\substack{\oriented{w_1}{w_2}\in W }} (\rho(w_2)-\rho(w_1)) = \rho(v) - \rho(\bar{u}) =  \rho(v)\,.
	\end{align*}
	Therefore, $\rho_t$ is a flex of $(G,\rho)$. See \cite{flexibleLabelings} for a proof that the edge lengths are constant
	and no two adjacent vertices are mapped to the same point.
\end{proof}

Finally, we connect the results of flexibility and NAC-colorings
with the connectivity of the bracing graph,
which forms the last part of the proof of \Cref{thm:main_result}.
\begin{theorem}
	\label{thm:bracingGraphConnectedIffNoCartNAC}
	Let $G$ be a braced ribbon-cutting graph such that every two vertices are separated by a ribbon.
	The bracing graph of $G$ is connected if and only if
	$G$ does not have a cartesian NAC-coloring.  
\end{theorem}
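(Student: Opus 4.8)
The proof rests on \Cref{lem:cartesianIffRibbonsMonochromatic}, which identifies the cartesian NAC-colorings of $G$ with the surjective edge-colorings of $G$ in which every ribbon is monochromatic. The plan is to translate such colorings into $2$-colorings of the vertex set $V_\Gamma$ of the ribbon graph. A surjective edge-coloring with monochromatic ribbons assigns a color to each ribbon and uses both colors; since the non-brace edges $E_c$ are partitioned by the ribbons, two distinct ribbons of $G$ can share only braces, so if an edge of the bracing subgraph joins $r_1$ and $r_2$ (witnessed by a common brace in $r_1\cap r_2$), then $r_1$ and $r_2$ necessarily receive the same color. Conversely, a $2$-coloring of $V_\Gamma$ that is constant on each connected component of the bracing subgraph produces, edge by edge, a coloring of $G$ with monochromatic ribbons, and it uses both colors exactly when the two color classes on $V_\Gamma$ are nonempty. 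Under this dictionary, the existence of a cartesian NAC-coloring of $G$ becomes the existence of a $2$-coloring of $V_\Gamma$ using both colors and constant on the components of the bracing subgraph, which is possible precisely when that subgraph has at least two components, i.e. when it is disconnected. So the whole theorem reduces to the single assertion that \emph{every} surjective edge-coloring of $G$ with monochromatic ribbons is automatically a NAC-coloring.

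The direction ``$G$ has a cartesian NAC-coloring $\implies$ bracing subgraph disconnected'' is then immediate and needs only one half of this dictionary: given a cartesian NAC-coloring $\delta$, \Cref{lem:cartesianIffRibbonsMonochromatic} makes every ribbon of $G$ monochromatic, two ribbons linked by an edge of the bracing subgraph share an edge (a brace) and hence the same color, $\delta$ is surjective so both colors occur among the ribbons, and therefore no path in the bracing subgraph can join a ribbon of one color to a ribbon of the other; the bracing subgraph is disconnected.

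For the converse I would take a disconnection of the bracing subgraph, color every edge of $G$ according to which side the ribbons through it lie on (this is consistent because ribbons sharing an edge lie on the same side), obtain a surjective coloring with monochromatic ribbons, and then verify it is a NAC-coloring. This last verification is the heart of the matter: given such a coloring and a cycle $C$ of $G$ that is not monochromatic, I must exhibit at least two edges of each color on $C$, and it suffices to show that every ribbon meets $C$ in an even number of edges, since then a ribbon contributing one edge contributes at least two, all carrying that ribbon's single color. Now each ribbon $r$ of $G$ is an edge cut --- it is a cut of the ribbon-cutting graph $(V_G,E_c)$, hence of $G$ by \Cref{rem:separationBraced} --- and for a simple ribbon \Cref{lem:twoComponents} shows $(V_G,E_c)\setminus r$ has exactly two components; adding the braces and deleting those lying in $r$ neither merges nor splits these (a brace joining the two sides would be a diagonal of a $4$-cycle straddling $r$, hence itself in $r$), so $G\setminus r$ has exactly two components and a cycle crosses such a two-sided cut an even number of times.

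I expect the main obstacle to be exactly this even-crossing claim for ribbons that are \emph{not} simple, where $G\setminus r$ could a priori split into more than two components and a cycle could meet $r$ an odd number of times (a hexagonal cut shows an abstract edge cut need not have this property). The remedy should be to revisit the argument of \Cref{lem:twoComponents} without the simplicity assumption, using the standing hypothesis that every two vertices of $G$ are separated by a ribbon to force $r$ to be a bond; the auxiliary bookkeeping --- that a brace may lie in several ribbons, and that such ribbons are always on the same side of the disconnection --- is routine once this parity fact is in hand, and in the application to braced \Pframework{}s all ribbons are simple by \Cref{rem:parallelogramPlacementImpliesSimpleRibbons} anyway.
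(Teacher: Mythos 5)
Your argument follows the same route as the paper's proof: both directions are funneled through \Cref{lem:cartesianIffRibbonsMonochromatic}, the forward direction uses that ribbons adjacent in the bracing subgraph share a brace and hence a color while surjectivity forces two colors, and the reverse direction colors ribbons by their component of the bracing subgraph and verifies the NAC condition by showing a cycle cannot meet a ribbon in exactly one edge. The difference is one of care at that last step, and there your write-up is actually more honest than the paper's: the paper simply asserts that ``since $r$ separates $G$, $r$ contains another edge of $C$,'' which is not a property of arbitrary edge cuts, whereas you derive it (for simple ribbons) from the two-component structure of \Cref{lem:twoComponents} together with the observation that any brace joining the two sides of the cut must itself lie in $r$, so that $r$ coincides with the set of all edges between $V_1$ and $V_2$ and is therefore crossed an even number of times by any cycle. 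Two small remarks. First, ``even'' is more than you need and genuinely fails for non-simple ribbons --- a triangle consisting of two $E_c$-edges of a ribbon and a brace of that same ribbon meets it three times --- so the claim to aim for is only $|C\cap r|\neq 1$ for each ribbon $r$. Second, your concern about non-simple ribbons is a fair criticism of the theorem as stated, since its hypotheses do not force simplicity (the braced $K_{2,3}$ is ribbon-cutting, has every pair of vertices separated by its unique ribbon, and that ribbon is not simple); but the paper's own proof passes over exactly the same point, and in the only application of the theorem, namely to braced \Pframework{}s, all ribbons are simple by \Cref{rem:parallelogramPlacementImpliesSimpleRibbons}, so the case you leave open is vacuous there.
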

\begin{proof}
	Let $B$ be the bracing graph of $G$.
	In a cartesian NAC-coloring $G$, ribbons are monochromatic by \Cref{lem:cartesianIffRibbonsMonochromatic}.
	Hence, if two ribbons are adjacent in~$B$, then the union of their edges is monochromatic.
	Therefore, if $B$ is connected, all edges of $G$ must have the same color,
	namely, no cartesian NAC-coloring exists.
	
	For the opposite implication, assume $B$ is not connected.
	We color the edges of the ribbons of one connected component by \red{} and the rest by \blue.
	To show that this surjective edge coloring is a NAC-coloring, consider a cycle $C$.
	Let $uv$ be an edge of $C$ and $r$ be the ribbon containing $uv$.
	Since $r$ separates $G$, $r$~contains another edge $u'v'$ of~$C$.
	Since ribbons are monochromatic, either all edges of $C$ have the same color or there are two edges of each color.
	The obtained NAC-coloring is cartesian by \Cref{lem:cartesianIffRibbonsMonochromatic}.  
\end{proof}
\begin{proof}[Proof of \Cref{thm:main_result}]
	Let $(G,\rho)$ be a braced \Pframework.
	Every two vertices are separated by a ribbon
	by \Cref{thm:ribbonCutGraphProperties} and \Cref{rem:separationBraced}.
	Hence, $(G,\rho)$ is rigid if and only if $G$ has no cartesian NAC-coloring 
	(\Cref{thm:flexGivesCartNAC,thm:cartNACimpliesFlex}) if and 
	only if the bracing graph of $G$ is connected (\Cref{thm:bracingGraphConnectedIffNoCartNAC}).
	
	Each edge of the bracing graph corresponds to at least one brace.
	The minimum number of braces making the framework rigid follows from the fact that
	the number of edges of a spanning tree of the bracing graph
	is one less than the number of vertices, i.e., ribbons.
	The result is also illustrated in \Cref{fig:finalexample}.
\end{proof}
\begin{figure}[ht]
  \centering
  \begin{tikzpicture}[rs/.style={black!10!white},rsl/.style={black!40!white},fcyc/.style={opacity=0.15}]
		\begin{scope}[rotate=-90]
			\node[fvertex] (0) at (0,0) {};
			\node[fvertex] (1) at (0,1) {};
			\node[fvertex,rotate around=0:(0)] (2) at ($(0)+(1.00,0)$) {};
			\node[fvertex] (3) at ($(2)+(1)-(0)$) {};
			\node[fvertex,rotate around=45:(2)] (4) at ($(2)+(0.50,0)$) {};
			\node[fvertex] (5) at ($(4)+(3)-(2)$) {};
			\node[fvertex,rotate around=120:(1)] (6) at ($(1)+(0.50,0)$) {};
			\node[fvertex] (7) at ($(6)+(3)-(1)$) {};
			\node[fvertex,rotate around=72:(7)] (8) at ($(7)+(1.50,0)$) {};
			\node[fvertex] (9) at ($(8)+(3)-(7)$) {};
			\node[fvertex] (10) at ($(5)+(9)-(3)$) {};
			\node[fvertex] (11) at ($(8)+(10)-(9)$) {};
			\node[fvertex] (12) at ($(6)+(8)-(7)$) {};
			\node[fvertex] (13) at ($(0)+(6)-(1)$) {};
			\node[fvertex,rotate around=144:(6)] (14) at ($(6)+(1.00,0)$) {};
			\node[fvertex] (15) at ($(14)+(13)-(6)$) {};
			\node[fvertex,rotate around=108:(6)] (16) at ($(6)+(1.00,0)$) {};
			\node[fvertex] (17) at ($(16)+(14)-(6)$) {};
			\node[fvertex] (18) at ($(12)+(16)-(6)$) {};
			\node[fvertex] (19) at ($(17)+(18)-(16)$) {};
			\node[fvertex,rotate around=90:(12)] (20) at ($(12)+(0.75,0)$) {};
			\node[fvertex] (21) at ($(20)+(8)-(12)$) {};
			\node[fvertex] (22) at ($(11)+(21)-(8)$) {};
			\node[fvertex] (23) at ($(18)+(20)-(12)$) {};
			\node[fvertex] (24) at ($(23)+(21)-(20)$) {};
			\node[fvertex] (25) at ($(19)+(23)-(18)$) {};
			\node[fvertex] (26) at ($(24)+(22)-(21)$) {};
			\node[fvertex] (27) at ($(25)+(24)-(23)$) {};
			\node[fvertex] (28) at ($(27)+(26)-(24)$) {};
			\node[fvertex] (29) at ($(25)+(28)-(27)$) {};
			\draw[edge] (0)--(1)
					(0)--(2) (2)--(3) (3)--(1)
					(2)--(4) (4)--(5) (5)--(3)
					(1)--(6) (6)--(7) (7)--(3)
					(7)--(8) (8)--(9) (9)--(3)
					(5)--(10) (10)--(9) 
					(8)--(11) (11)--(10) 
					(6)--(12) (12)--(8) 
					(0)--(13) (13)--(6) 
					(6)--(14) (14)--(15) (15)--(13)
					(6)--(16) (16)--(17) (17)--(14)
					(12)--(18) (18)--(16) 
					(17)--(19) (19)--(18) 
					(12)--(20) (20)--(21) (21)--(8)
					(11)--(22) (22)--(21) 
					(18)--(23) (23)--(20) 
					(23)--(24) (24)--(21) 
					(19)--(25) (25)--(23) 
					(24)--(26) (26)--(22) 
					(25)--(27) (27)--(24) 
					(27)--(28) (28)--(26) 
					(25)--(29) (29)--(28)
					;
			\draw[brace] (27)edge(29) (3)edge(4) (1)edge(13) (7)edge(9) (12)edge(16) (20)edge(18) (23)edge(19);
			\draw[ribbon,rs] ($(4)!0.5!(5)$) node[labelsty,rsl,below right=2.5pt] {$r_3$} -- ($(2)!0.5!(3)$) -- ($(0)!0.5!(1)$) -- ($(6)!0.5!(13)$) -- ($(14)!0.5!(15)$);
			\draw[ribbon,rs] ($(5)!0.5!(10)$) node[labelsty,rsl,below right=2.5pt] {$r_4$} -- ($(3)!0.5!(9)$) -- ($(7)!0.5!(8)$) -- ($(6)!0.5!(12)$) -- ($(16)!0.5!(18)$) -- ($(17)!0.5!(19)$);
			\draw[ribbon,rs] ($(10)!0.5!(11)$) node[labelsty,rsl,below right=2.5pt] {$r_5$} -- ($(8)!0.5!(9)$) -- ($(3)!0.5!(7)$) -- ($(1)!0.5!(6)$) -- ($(0)!0.5!(13)$);
			\draw[ribbon,rs] ($(11)!0.5!(22)$) node[labelsty,rsl,below right=5pt] {$r_6$} -- ($(8)!0.5!(21)$) -- ($(12)!0.5!(20)$) -- ($(18)!0.5!(23)$) -- ($(19)!0.5!(25)$);
			\draw[ribbon,rs] ($(22)!0.5!(26)$) node[labelsty,rsl,below right=5pt] {$r_7$} -- ($(21)!0.5!(24)$) -- ($(20)!0.5!(23)$) -- ($(12)!0.5!(18)$) -- ($(6)!0.5!(16)$) -- ($(14)!0.5!(17)$);
			\draw[ribbon,rs] ($(26)!0.5!(28)$) node[labelsty,rsl,below right=5pt] {$r_8$} -- ($(24)!0.5!(27)$) -- ($(23)!0.5!(25)$) -- ($(18)!0.5!(19)$) -- ($(16)!0.5!(17)$) -- ($(6)!0.5!(14)$) -- ($(13)!0.5!(15)$);
			\draw[ribbon,rs] ($(2)!0.5!(4)$) node[labelsty,rsl,left=5pt] {$r_2$} -- ($(3)!0.5!(5)$) -- ($(9)!0.5!(10)$) -- ($(8)!0.5!(11)$) -- ($(21)!0.5!(22)$) -- ($(24)!0.5!(26)$) -- ($(27)!0.5!(28)$) -- ($(25)!0.5!(29)$);
			\draw[ribbon,rs] ($(0)!0.5!(2)$) node[labelsty,rsl,left=5pt] {$r_1$} -- ($(1)!0.5!(3)$) -- ($(6)!0.5!(7)$) -- ($(8)!0.5!(12)$) -- ($(20)!0.5!(21)$) -- ($(23)!0.5!(24)$) -- ($(25)!0.5!(27)$) -- ($(28)!0.5!(29)$);
		\end{scope}
		\begin{scope}[xshift=10cm,scale=0.8]
			\coordinate (o) at (0,0);
		  \node[gvertex,label={[labelsty]0:$r_1$}] (b1) at (1.5,0) {};
		  \node[gvertex,label={[labelsty]0:$r_2$},rotate around=45:(o)] (b2) at (b1) {};
		  \node[gvertex,label={[labelsty]45:$r_3$},rotate around=45:(o)] (b3) at (b2) {};
		  \node[gvertex,label={[labelsty]90:$r_4$},rotate around=45:(o)] (b4) at (b3) {};
		  \node[gvertex,label={[labelsty]135:$r_5$},rotate around=45:(o)] (b5) at (b4) {};
		  \node[gvertex,label={[labelsty]180:$r_6$},rotate around=45:(o)] (b6) at (b5) {};
		  \node[gvertex,label={[labelsty]225:$r_7$},rotate around=45:(o)] (b7) at (b6) {};
		  \node[gvertex,label={[labelsty]270:$r_8$},rotate around=45:(o)] (b8) at (b7) {};
		  \draw[edge] (b1)edge(b2)
		              (b2)edge(b3)
		              (b3)edge(b5)
		              (b4)edge(b5) (b4)edge(b7)
		              (b6)edge(b7) (b6)edge(b8);
		\end{scope}
		\begin{scope}[yshift=-4cm,rotate=-90]
			\node[fvertex] (0) at (0,0) {};
			\node[fvertex] (1) at (0,1) {};
			\node[fvertex,rotate around=0:(0)] (2) at ($(0)+(1.00,0)$) {};
			\node[fvertex] (3) at ($(2)+(1)-(0)$) {};
			\node[fvertex,rotate around=45:(2)] (4) at ($(2)+(0.50,0)$) {};
			\node[fvertex] (5) at ($(4)+(3)-(2)$) {};
			\node[fvertex,rotate around=120:(1)] (6) at ($(1)+(0.50,0)$) {};
			\node[fvertex] (7) at ($(6)+(3)-(1)$) {};
			\node[fvertex,rotate around=72:(7)] (8) at ($(7)+(1.50,0)$) {};
			\node[fvertex] (9) at ($(8)+(3)-(7)$) {};
			\node[fvertex] (10) at ($(5)+(9)-(3)$) {};
			\node[fvertex] (11) at ($(8)+(10)-(9)$) {};
			\node[fvertex] (12) at ($(6)+(8)-(7)$) {};
			\node[fvertex] (13) at ($(0)+(6)-(1)$) {};
			\node[fvertex,rotate around=144:(6)] (14) at ($(6)+(1.00,0)$) {};
			\node[fvertex] (15) at ($(14)+(13)-(6)$) {};
			\node[fvertex,rotate around=108:(6)] (16) at ($(6)+(1.00,0)$) {};
			\node[fvertex] (17) at ($(16)+(14)-(6)$) {};
			\node[fvertex] (18) at ($(12)+(16)-(6)$) {};
			\node[fvertex] (19) at ($(17)+(18)-(16)$) {};
			\node[fvertex,rotate around=90:(12)] (20) at ($(12)+(0.75,0)$) {};
			\node[fvertex] (21) at ($(20)+(8)-(12)$) {};
			\node[fvertex] (22) at ($(11)+(21)-(8)$) {};
			\node[fvertex] (23) at ($(18)+(20)-(12)$) {};
			\node[fvertex] (24) at ($(23)+(21)-(20)$) {};
			\node[fvertex] (25) at ($(19)+(23)-(18)$) {};
			\node[fvertex] (26) at ($(24)+(22)-(21)$) {};
			\node[fvertex] (27) at ($(25)+(24)-(23)$) {};
			\node[fvertex] (28) at ($(27)+(26)-(24)$) {};
			\node[fvertex] (29) at ($(25)+(28)-(27)$) {};
			\draw[bedge] (0)--(2) (3)--(1) (5)--(3) (6)--(7) (12)--(8) (6)--(14) (15)--(13)
					(6)--(16) (16)--(17) (17)--(14)
					(12)--(18) (18)--(16) 
					(17)--(19) (19)--(18) 
					(12)--(20) (20)--(21) (21)--(8)
					(11)--(22) (22)--(21) 
					(18)--(23) (23)--(20) 
					(23)--(24) (24)--(21) 
					(19)--(25) (25)--(23) 
					(24)--(26) (26)--(22) 
					(25)--(27) (27)--(24) 
					(27)--(28) (28)--(26) 
					(25)--(29) (29)--(28)
					;
			\draw[redge] (0)--(1) (2)--(3) (2)--(4) (4)--(5) (5)--(3) (1)--(6)  (7)--(3)
					(7)--(8) (8)--(9) (9)--(3) (5)--(10) (10)--(9) (8)--(11) (11)--(10) 
					(6)--(12) (0)--(13) (13)--(6) (14)--(15) (18)--(16) (17)--(19)
					(22)--(21) (24)--(26) (27)--(28)  (25)--(29);
			\draw[brace,colR] (3)edge(4) (5)edge(9) (8)edge(10);
			\draw[brace,colB] (20)edge(18) (23)edge(19) (23)edge(27);
			\draw[ribbon,rs] ($(4)!0.5!(5)$) node[labelsty,rsl,below right=2.5pt] {$r_3$} -- ($(2)!0.5!(3)$) -- ($(0)!0.5!(1)$) -- ($(6)!0.5!(13)$) -- ($(14)!0.5!(15)$);
			\draw[ribbon,rs] ($(5)!0.5!(10)$) node[labelsty,rsl,below right=2.5pt] {$r_4$} -- ($(3)!0.5!(9)$) -- ($(7)!0.5!(8)$) -- ($(6)!0.5!(12)$) -- ($(16)!0.5!(18)$) -- ($(17)!0.5!(19)$);
			\draw[ribbon,rs] ($(10)!0.5!(11)$) node[labelsty,rsl,below right=2.5pt] {$r_5$} -- ($(8)!0.5!(9)$) -- ($(3)!0.5!(7)$) -- ($(1)!0.5!(6)$) -- ($(0)!0.5!(13)$);
			\draw[ribbon,rs] ($(11)!0.5!(22)$) node[labelsty,rsl,below right=5pt] {$r_6$} -- ($(8)!0.5!(21)$) -- ($(12)!0.5!(20)$) -- ($(18)!0.5!(23)$) -- ($(19)!0.5!(25)$);
			\draw[ribbon,rs] ($(22)!0.5!(26)$) node[labelsty,rsl,below right=5pt] {$r_7$} -- ($(21)!0.5!(24)$) -- ($(20)!0.5!(23)$) -- ($(12)!0.5!(18)$) -- ($(6)!0.5!(16)$) -- ($(14)!0.5!(17)$);
			\draw[ribbon,rs] ($(26)!0.5!(28)$) node[labelsty,rsl,below right=5pt] {$r_8$} -- ($(24)!0.5!(27)$) -- ($(23)!0.5!(25)$) -- ($(18)!0.5!(19)$) -- ($(16)!0.5!(17)$) -- ($(6)!0.5!(14)$) -- ($(13)!0.5!(15)$);
			\draw[ribbon,rs] ($(2)!0.5!(4)$) node[labelsty,rsl,left=5pt] {$r_2$} -- ($(3)!0.5!(5)$) -- ($(9)!0.5!(10)$) -- ($(8)!0.5!(11)$) -- ($(21)!0.5!(22)$) -- ($(24)!0.5!(26)$) -- ($(27)!0.5!(28)$) -- ($(25)!0.5!(29)$);
			\draw[ribbon,rs] ($(0)!0.5!(2)$) node[labelsty,rsl,left=5pt] {$r_1$} -- ($(1)!0.5!(3)$) -- ($(6)!0.5!(7)$) -- ($(8)!0.5!(12)$) -- ($(20)!0.5!(21)$) -- ($(23)!0.5!(24)$) -- ($(25)!0.5!(27)$) -- ($(28)!0.5!(29)$);
		\end{scope}
		\begin{scope}[xshift=10cm,yshift=-4cm,scale=0.8]
			\coordinate (o) at (0,0);
		  \node[gvertex,label={[labelsty]0:$r_1$}] (b1) at (1.5,0) {};
		  \node[gvertex,label={[labelsty]0:$r_2$},rotate around=45:(o)] (b2) at (b1) {};
		  \node[gvertex,label={[labelsty]45:$r_3$},rotate around=45:(o)] (b3) at (b2) {};
		  \node[gvertex,label={[labelsty]90:$r_4$},rotate around=45:(o)] (b4) at (b3) {};
		  \node[gvertex,label={[labelsty]135:$r_5$},rotate around=45:(o)] (b5) at (b4) {};
		  \node[gvertex,label={[labelsty]180:$r_6$},rotate around=45:(o)] (b6) at (b5) {};
		  \node[gvertex,label={[labelsty]225:$r_7$},rotate around=45:(o)] (b7) at (b6) {};
		  \node[gvertex,label={[labelsty]270:$r_8$},rotate around=45:(o)] (b8) at (b7) {};
		  \draw[bedge] (b1)edge(b8) (b6)edge(b7) (b6)edge(b8);
		  \draw[redge] (b2)edge(b3) (b2)edge(b4) (b2)edge(b5);
		\end{scope}
		\begin{scope}[yshift=-7.5cm]
		\begin{scope}[rotate=-90,scale=0.75]
			\node[fvertex] (0) at (0,0) {};
			\node[fvertex] (1) at (0,1) {};
			\node[fvertex,rotate around=-15:(0)] (2) at ($(0)+(1.00,0)$) {}; 
			\node[fvertex] (3) at ($(2)+(1)-(0)$) {};
			\node[fvertex,rotate around=45:(2)] (4) at ($(2)+(0.50,0)$) {};
			\node[fvertex] (5) at ($(4)+(3)-(2)$) {};
			\node[fvertex,rotate around=120:(1)] (6) at ($(1)+(0.50,0)$) {};
			\node[fvertex] (7) at ($(6)+(3)-(1)$) {};
			\node[fvertex,rotate around=72:(7)] (8) at ($(7)+(1.50,0)$) {};
			\node[fvertex] (9) at ($(8)+(3)-(7)$) {};
			\node[fvertex] (10) at ($(5)+(9)-(3)$) {};
			\node[fvertex] (11) at ($(8)+(10)-(9)$) {};
			\node[fvertex] (12) at ($(6)+(8)-(7)$) {};
			\node[fvertex] (13) at ($(0)+(6)-(1)$) {};
			\node[fvertex,rotate around=129:(6)] (14) at ($(6)+(1.00,0)$) {}; 
			\node[fvertex] (15) at ($(14)+(13)-(6)$) {};
			\node[fvertex,rotate around=93:(6)] (16) at ($(6)+(1.00,0)$) {}; 
			\node[fvertex] (17) at ($(16)+(14)-(6)$) {};
			\node[fvertex] (18) at ($(12)+(16)-(6)$) {};
			\node[fvertex] (19) at ($(17)+(18)-(16)$) {};
			\node[fvertex,rotate around=75:(12)] (20) at ($(12)+(0.75,0)$) {}; 
			\node[fvertex] (21) at ($(20)+(8)-(12)$) {};
			\node[fvertex] (22) at ($(11)+(21)-(8)$) {};
			\node[fvertex] (23) at ($(18)+(20)-(12)$) {};
			\node[fvertex] (24) at ($(23)+(21)-(20)$) {};
			\node[fvertex] (25) at ($(19)+(23)-(18)$) {};
			\node[fvertex] (26) at ($(24)+(22)-(21)$) {};
			\node[fvertex] (27) at ($(25)+(24)-(23)$) {};
			\node[fvertex] (28) at ($(27)+(26)-(24)$) {};
			\node[fvertex] (29) at ($(25)+(28)-(27)$) {};
			\draw[bedge] (0)--(2) (3)--(1) (5)--(3) (6)--(7) (12)--(8) (6)--(14) (15)--(13)
					(6)--(16) (16)--(17) (17)--(14)
					(12)--(18) (18)--(16) 
					(17)--(19) (19)--(18) 
					(12)--(20) (20)--(21) (21)--(8)
					(11)--(22) (22)--(21) 
					(18)--(23) (23)--(20) 
					(23)--(24) (24)--(21) 
					(19)--(25) (25)--(23) 
					(24)--(26) (26)--(22) 
					(25)--(27) (27)--(24) 
					(27)--(28) (28)--(26) 
					(25)--(29) (29)--(28)
					;
			\draw[redge] (0)--(1) (2)--(3) (2)--(4) (4)--(5) (5)--(3) (1)--(6)  (7)--(3)
					(7)--(8) (8)--(9) (9)--(3) (5)--(10) (10)--(9) (8)--(11) (11)--(10) 
					(6)--(12) (0)--(13) (13)--(6) (14)--(15) (18)--(16) (17)--(19)
					(22)--(21) (24)--(26) (27)--(28)  (25)--(29);
			\draw[brace,colR] (3)edge(4) (5)edge(9) (8)edge(10);
			\draw[brace,colB] (20)edge(18) (23)edge(19) (23)edge(27);
			\begin{scope}[on background layer]
				\fill[colR,fcyc] ($(2)$)--($(3)$)--($(5)$)--($(4)$)--($(2)$)--cycle;
				\fill[colR,fcyc] ($(5)$)--($(10)$)--($(9)$)--($(3)$)--($(5)$)--cycle;
				\fill[colR,fcyc] ($(3)$)--($(9)$)--($(8)$)--($(7)$)--($(3)$)--cycle;
				\fill[colR,fcyc] ($(8)$)--($(9)$)--($(10)$)--($(11)$)--($(8)$)--cycle;
				\fill[colR,fcyc] ($(0)$)--($(13)$)--($(6)$)--($(1)$)--($(0)$)--cycle;
				\fill[colB,fcyc] ($(8)$)--($(12)$)--($(20)$)--($(21)$)--($(8)$)--cycle;
				\fill[colB,fcyc] ($(20)$)--($(21)$)--($(24)$)--($(23)$)--($(20)$)--cycle;
				\fill[colB,fcyc] ($(20)$)--($(23)$)--($(18)$)--($(12)$)--($(20)$)--cycle;
				\fill[colB,fcyc] ($(18)$)--($(23)$)--($(25)$)--($(19)$)--($(18)$)--cycle;
				\fill[colB,fcyc] ($(23)$)--($(25)$)--($(27)$)--($(24)$)--($(23)$)--cycle;
				\fill[colB,fcyc] ($(6)$)--($(14)$)--($(17)$)--($(16)$)--($(6)$)--cycle;
			\end{scope}
		\end{scope}
		\begin{scope}[xshift=4.75cm,rotate=-90,scale=0.75]
			\node[fvertex] (0) at (0,0) {};
			\node[fvertex] (1) at (0,1) {};
			\node[fvertex,rotate around=0:(0)] (2) at ($(0)+(1.00,0)$) {}; 
			\node[fvertex] (3) at ($(2)+(1)-(0)$) {};
			\node[fvertex,rotate around=45:(2)] (4) at ($(2)+(0.50,0)$) {};
			\node[fvertex] (5) at ($(4)+(3)-(2)$) {};
			\node[fvertex,rotate around=120:(1)] (6) at ($(1)+(0.50,0)$) {};
			\node[fvertex] (7) at ($(6)+(3)-(1)$) {};
			\node[fvertex,rotate around=72:(7)] (8) at ($(7)+(1.50,0)$) {};
			\node[fvertex] (9) at ($(8)+(3)-(7)$) {};
			\node[fvertex] (10) at ($(5)+(9)-(3)$) {};
			\node[fvertex] (11) at ($(8)+(10)-(9)$) {};
			\node[fvertex] (12) at ($(6)+(8)-(7)$) {};
			\node[fvertex] (13) at ($(0)+(6)-(1)$) {};
			\node[fvertex,rotate around=144:(6)] (14) at ($(6)+(1.00,0)$) {}; 
			\node[fvertex] (15) at ($(14)+(13)-(6)$) {};
			\node[fvertex,rotate around=108:(6)] (16) at ($(6)+(1.00,0)$) {}; 
			\node[fvertex] (17) at ($(16)+(14)-(6)$) {};
			\node[fvertex] (18) at ($(12)+(16)-(6)$) {};
			\node[fvertex] (19) at ($(17)+(18)-(16)$) {};
			\node[fvertex,rotate around=90:(12)] (20) at ($(12)+(0.75,0)$) {}; 
			\node[fvertex] (21) at ($(20)+(8)-(12)$) {};
			\node[fvertex] (22) at ($(11)+(21)-(8)$) {};
			\node[fvertex] (23) at ($(18)+(20)-(12)$) {};
			\node[fvertex] (24) at ($(23)+(21)-(20)$) {};
			\node[fvertex] (25) at ($(19)+(23)-(18)$) {};
			\node[fvertex] (26) at ($(24)+(22)-(21)$) {};
			\node[fvertex] (27) at ($(25)+(24)-(23)$) {};
			\node[fvertex] (28) at ($(27)+(26)-(24)$) {};
			\node[fvertex] (29) at ($(25)+(28)-(27)$) {};
			\draw[bedge] (0)--(2) (3)--(1) (5)--(3) (6)--(7) (12)--(8) (6)--(14) (15)--(13)
					(6)--(16) (16)--(17) (17)--(14)
					(12)--(18) (18)--(16) 
					(17)--(19) (19)--(18) 
					(12)--(20) (20)--(21) (21)--(8)
					(11)--(22) (22)--(21) 
					(18)--(23) (23)--(20) 
					(23)--(24) (24)--(21) 
					(19)--(25) (25)--(23) 
					(24)--(26) (26)--(22) 
					(25)--(27) (27)--(24) 
					(27)--(28) (28)--(26) 
					(25)--(29) (29)--(28)
					;
			\draw[redge] (0)--(1) (2)--(3) (2)--(4) (4)--(5) (5)--(3) (1)--(6)  (7)--(3)
					(7)--(8) (8)--(9) (9)--(3) (5)--(10) (10)--(9) (8)--(11) (11)--(10) 
					(6)--(12) (0)--(13) (13)--(6) (14)--(15) (18)--(16) (17)--(19)
					(22)--(21) (24)--(26) (27)--(28)  (25)--(29);
			\draw[brace,colR] (3)edge(4) (5)edge(9) (8)edge(10);
			\draw[brace,colB] (20)edge(18) (23)edge(19) (23)edge(27);
			\begin{scope}[on background layer]
				\fill[colR,fcyc] ($(2)$)--($(3)$)--($(5)$)--($(4)$)--($(2)$)--cycle;
				\fill[colR,fcyc] ($(5)$)--($(10)$)--($(9)$)--($(3)$)--($(5)$)--cycle;
				\fill[colR,fcyc] ($(3)$)--($(9)$)--($(8)$)--($(7)$)--($(3)$)--cycle;
				\fill[colR,fcyc] ($(8)$)--($(9)$)--($(10)$)--($(11)$)--($(8)$)--cycle;
				\fill[colR,fcyc] ($(0)$)--($(13)$)--($(6)$)--($(1)$)--($(0)$)--cycle;
				\fill[colB,fcyc] ($(8)$)--($(12)$)--($(20)$)--($(21)$)--($(8)$)--cycle;
				\fill[colB,fcyc] ($(20)$)--($(21)$)--($(24)$)--($(23)$)--($(20)$)--cycle;
				\fill[colB,fcyc] ($(20)$)--($(23)$)--($(18)$)--($(12)$)--($(20)$)--cycle;
				\fill[colB,fcyc] ($(18)$)--($(23)$)--($(25)$)--($(19)$)--($(18)$)--cycle;
				\fill[colB,fcyc] ($(23)$)--($(25)$)--($(27)$)--($(24)$)--($(23)$)--cycle;
				\fill[colB,fcyc] ($(6)$)--($(14)$)--($(17)$)--($(16)$)--($(6)$)--cycle;
			\end{scope}
		\end{scope}
		\begin{scope}[xshift=9.5cm,rotate=-90,scale=0.75]
			\node[fvertex] (0) at (0,0) {};
			\node[fvertex] (1) at (0,1) {};
			\node[fvertex,rotate around=15:(0)] (2) at ($(0)+(1.00,0)$) {}; 
			\node[fvertex] (3) at ($(2)+(1)-(0)$) {};
			\node[fvertex,rotate around=45:(2)] (4) at ($(2)+(0.50,0)$) {};
			\node[fvertex] (5) at ($(4)+(3)-(2)$) {};
			\node[fvertex,rotate around=120:(1)] (6) at ($(1)+(0.50,0)$) {};
			\node[fvertex] (7) at ($(6)+(3)-(1)$) {};
			\node[fvertex,rotate around=72:(7)] (8) at ($(7)+(1.50,0)$) {};
			\node[fvertex] (9) at ($(8)+(3)-(7)$) {};
			\node[fvertex] (10) at ($(5)+(9)-(3)$) {};
			\node[fvertex] (11) at ($(8)+(10)-(9)$) {};
			\node[fvertex] (12) at ($(6)+(8)-(7)$) {};
			\node[fvertex] (13) at ($(0)+(6)-(1)$) {};
			\node[fvertex,rotate around=159:(6)] (14) at ($(6)+(1.00,0)$) {}; 
			\node[fvertex] (15) at ($(14)+(13)-(6)$) {};
			\node[fvertex,rotate around=123:(6)] (16) at ($(6)+(1.00,0)$) {}; 
			\node[fvertex] (17) at ($(16)+(14)-(6)$) {};
			\node[fvertex] (18) at ($(12)+(16)-(6)$) {};
			\node[fvertex] (19) at ($(17)+(18)-(16)$) {};
			\node[fvertex,rotate around=105:(12)] (20) at ($(12)+(0.75,0)$) {}; 
			\node[fvertex] (21) at ($(20)+(8)-(12)$) {};
			\node[fvertex] (22) at ($(11)+(21)-(8)$) {};
			\node[fvertex] (23) at ($(18)+(20)-(12)$) {};
			\node[fvertex] (24) at ($(23)+(21)-(20)$) {};
			\node[fvertex] (25) at ($(19)+(23)-(18)$) {};
			\node[fvertex] (26) at ($(24)+(22)-(21)$) {};
			\node[fvertex] (27) at ($(25)+(24)-(23)$) {};
			\node[fvertex] (28) at ($(27)+(26)-(24)$) {};
			\node[fvertex] (29) at ($(25)+(28)-(27)$) {};
			\draw[bedge] (0)--(2) (3)--(1) (5)--(3) (6)--(7) (12)--(8) (6)--(14) (15)--(13)
					(6)--(16) (16)--(17) (17)--(14)
					(12)--(18) (18)--(16) 
					(17)--(19) (19)--(18) 
					(12)--(20) (20)--(21) (21)--(8)
					(11)--(22) (22)--(21) 
					(18)--(23) (23)--(20) 
					(23)--(24) (24)--(21) 
					(19)--(25) (25)--(23) 
					(24)--(26) (26)--(22) 
					(25)--(27) (27)--(24) 
					(27)--(28) (28)--(26) 
					(25)--(29) (29)--(28)
					;
			\draw[redge] (0)--(1) (2)--(3) (2)--(4) (4)--(5) (5)--(3) (1)--(6)  (7)--(3)
					(7)--(8) (8)--(9) (9)--(3) (5)--(10) (10)--(9) (8)--(11) (11)--(10) 
					(6)--(12) (0)--(13) (13)--(6) (14)--(15) (18)--(16) (17)--(19)
					(22)--(21) (24)--(26) (27)--(28)  (25)--(29);
			\draw[brace,colR] (3)edge(4) (5)edge(9) (8)edge(10);
			\draw[brace,colB] (20)edge(18) (23)edge(19) (23)edge(27);
			\begin{scope}[on background layer]
				\fill[colR,fcyc] ($(2)$)--($(3)$)--($(5)$)--($(4)$)--($(2)$)--cycle;
				\fill[colR,fcyc] ($(5)$)--($(10)$)--($(9)$)--($(3)$)--($(5)$)--cycle;
				\fill[colR,fcyc] ($(3)$)--($(9)$)--($(8)$)--($(7)$)--($(3)$)--cycle;
				\fill[colR,fcyc] ($(8)$)--($(9)$)--($(10)$)--($(11)$)--($(8)$)--cycle;
				\fill[colR,fcyc] ($(0)$)--($(13)$)--($(6)$)--($(1)$)--($(0)$)--cycle;
				\fill[colB,fcyc] ($(8)$)--($(12)$)--($(20)$)--($(21)$)--($(8)$)--cycle;
				\fill[colB,fcyc] ($(20)$)--($(21)$)--($(24)$)--($(23)$)--($(20)$)--cycle;
				\fill[colB,fcyc] ($(20)$)--($(23)$)--($(18)$)--($(12)$)--($(20)$)--cycle;
				\fill[colB,fcyc] ($(18)$)--($(23)$)--($(25)$)--($(19)$)--($(18)$)--cycle;
				\fill[colB,fcyc] ($(23)$)--($(25)$)--($(27)$)--($(24)$)--($(23)$)--cycle;
				\fill[colB,fcyc] ($(6)$)--($(14)$)--($(17)$)--($(16)$)--($(6)$)--cycle;
			\end{scope}
		\end{scope}
		\end{scope}
	\end{tikzpicture}
	\caption{Two bracings of a \Pframework\ where the first one is rigid as visible by the connectivity of the bracing graph.
	The second bracing yields a flexible framework since the bracing graph is not connected.
	We show three instances of the flex that is possible with the bracing
	and the unique resulting cartesian NAC-coloring thereof (shaded parallelograms preserves their shapes).}
	\label{fig:finalexample}
\end{figure}
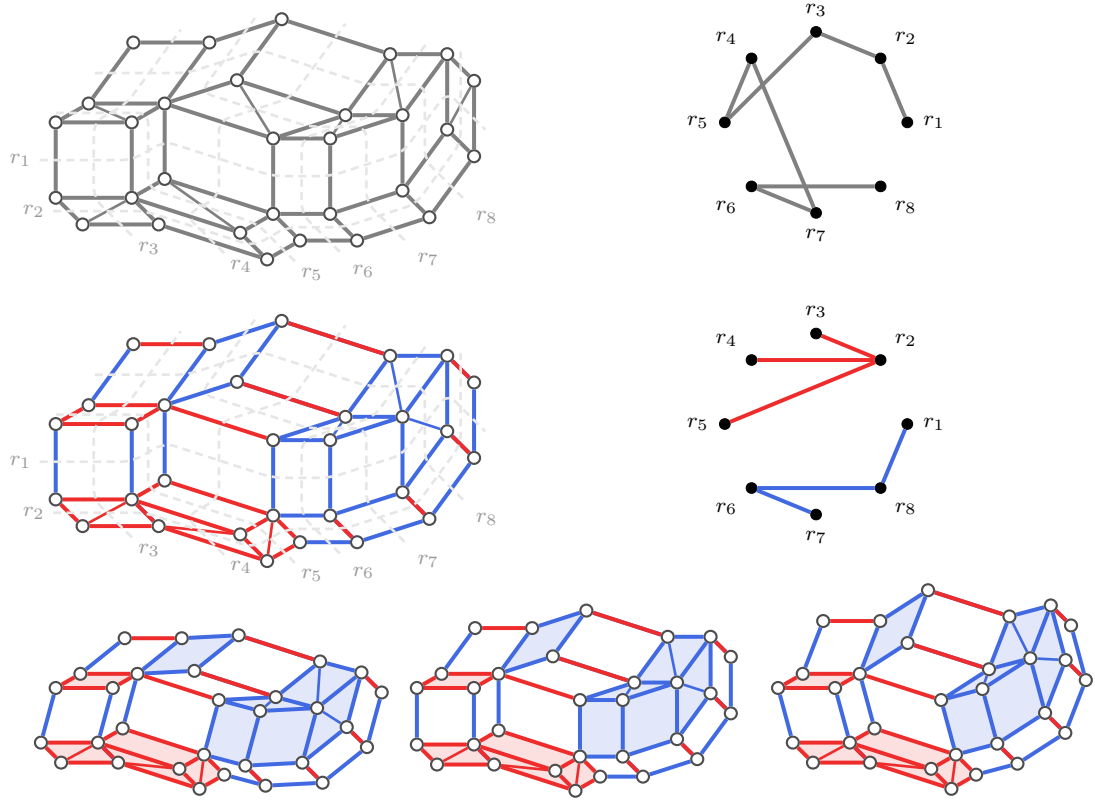

\section*{Conclusion}
We have applied the theory of NAC-colorings to \Pframework s generalizing previous results in the area of bracing grids.
In fact, we have shown that a \Pframework\ is rigid if and only if it has no cartesian NAC-coloring if and only if the bracing graph is connected.
Notice that a consequence of this statement is that a braced rectangular grid/rhombic carpet is rigid if and only if it is infinitesimally rigid.
This is not the case for grids with holes as there are instances which are rigid but not infinitesimally rigid
(an example can be obtained by bracing all squares besides those with the indicated ribbons in \Cref{fig:nonSeparatingRibbon}).

Similarly as in rectangular grids there are plenty interesting questions for further generalizations such as graphs with holes, different types of diagonals or higher dimensions.
For \Pframework s these questions are subject to further research.

\subsection*{Acknowledgments}
We thank Eliana Duarte for bringing us to this topic and sharing her ideas,
and Matteo Gallet for his comments to the paper. 
This project was supported by the Austrian Science Fund (FWF): P31061, P31888 and W1214-N15, 
and by the Ministry of Education, Youth and Sports of the Czech Republic, project no. CZ.02.1.01/0.0/0.0/16\_019/0000778.

\end{document}